\pdfoutput=1
\documentclass[12pt]{article}

\title{Obstructions in Symplectic Embedding Problems for Four-Dimensional Convex Toric Domains}
\author{Theodore Coyne}
\date{}

\usepackage{amssymb}
\usepackage{latexsym}
\usepackage{amsmath}
\usepackage{amsthm}
\usepackage{amscd}

\usepackage{graphicx}
\usepackage{subcaption}

\numberwithin{equation}{section}

\newtheorem{theorem}{Theorem}[section]
\newtheorem{proposition}[theorem]{Proposition}

\newtheorem{lemma}[theorem]{Lemma}
\newtheorem{lemma-definition}[theorem]{Lemma-Definition}

\theoremstyle{definition}
\newtheorem{definition}[theorem]{Definition}
\newtheorem{remark}[theorem]{Remark}

\newtheorem{example}[theorem]{Example}

\newcommand{\C}{{\mathbb C}}

\newcommand{\R}{{\mathbb R}}

\newcommand{\Z}{{\mathbb Z}}

\newcommand{\bpm}{\begin{pmatrix}}
\newcommand{\epm}{\end{pmatrix}}

\renewcommand{\epsilon}{\varepsilon}

\begin{document}

\maketitle

\begin{abstract}
In \textit{Beyond ECH Capacities}, Hutchings used the ECH cobordism map to produce obstructions for embedding one convex toric domain into another. We generalize the methods there to obstruct symplectic embeddings of a disjoint union of convex toric domains into another convex toric domain. We also find additional obstructions coming from intersection positivity and use similar methods to obstruct the existence of isotopies between two given symplectic embeddings.
\end{abstract}

\setcounter{tocdepth}{2}

\section{Introduction and Statement of Results}

In this paper, we will study symplectic embedding problems of four-dimensional toric domains. 
Let $\Omega\subset\mathbb{R}_{\geq 0}^2$ be the compact region bounded by the coordinate axes and a simple curve from $(a,0)$ to $(0,b)$ for some $a,b>0$ that only intersects the coordinate axes at these endpoints. Then, for the function $\mu:\C^2\to\R_{\geq 0}^2$ given by $\mu(z,w)=(\pi|z|^2,\pi|w|^2),$ we define the associated \textit{toric domain}

$$X_{\Omega}=\mu^{-1}(\Omega).$$

We call this curve from $(a,0)$ to $(0,b)$ the \textit{defining curve} for $\Omega$ or $X_{\Omega}.$ We equip $X_{\Omega}$ with the restriction of the standard symplectic form $\omega=\frac{i}{2}\sum_{i=1}^2dz_i\wedge d\overline{z}_i$ to make it a symplectic manifold with boundary. If the defining curve for $\Omega$ is smooth and intersects rays through the origin transversely, then $Y:=\partial X_{\Omega}$ is a contact manifold with contact form $\lambda_Y=\iota_V\omega,$ where $V=\frac{1}{2}\left(r_1\partial_{r_1}+r_2\partial_{r_2}\right)$ is the radial vector field on $\C^2$ written in polar coordinates.
When $\Omega$ is of the form $$\Omega=\{(x,y)\in\R_{\geq 0}^2:0\leq x\leq a, 0\leq y\leq f(x)\}$$ for some non-increasing and concave function $f:[0,a]\to\R_{\geq 0},$ we say that $X_{\Omega}$ is a \textit{convex toric domain} and call $f$ the \textit{defining function} of $X_{\Omega}$ or of $\Omega$. 

\begin{remark} This is a more restrictive definition of a convex toric domain than is given in \cite{Gard}, where $\Omega$ is just required to be convex, and not necessarily defined by a function.
\end{remark}

\begin{figure}[h]
\centering
\includegraphics[scale=0.1,alt={A shaded region in the first quadrant of the plane, bounded by the positive coordinate axes and the graph of a decreasing and concave function.}]{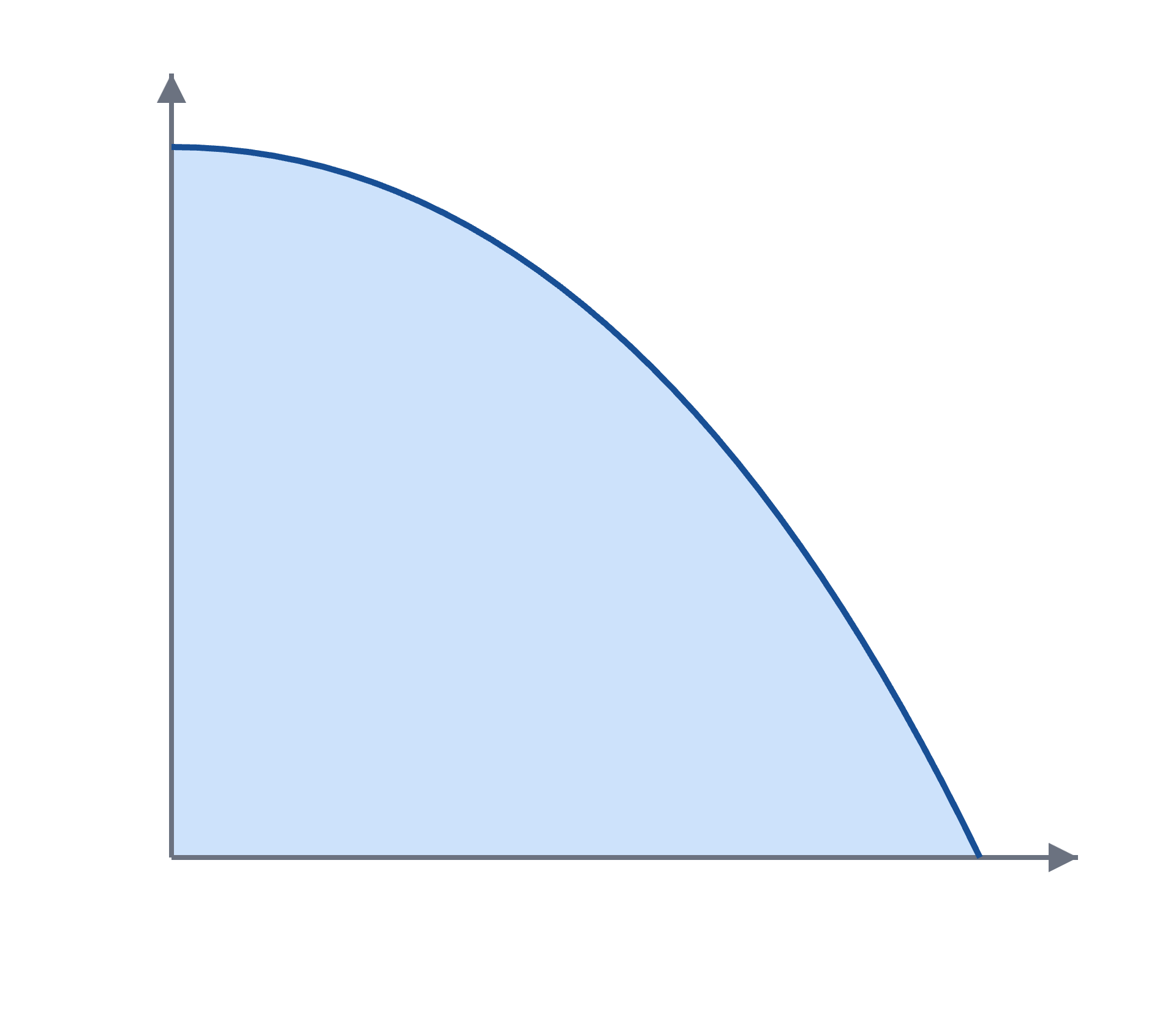}
\caption{A domain $\Omega$ defining a convex toric domain $X_{\Omega}$.}
\label{fig:convex-region}
\end{figure}
When the defining function $f$ is convex instead and $f(a)=0$, we call the corresponding region $X_{\Omega}$ a \textit{concave toric domain}.
Let $a,b>0$. Two important examples of convex toric domains we shall consider are the following:
\begin{itemize}
\item Let $\Omega$ be the rectangle with vertices $(0,0), (a,0), (0,b), \text{ and } (a,b).$ Then 
$$X_{\Omega}=\left\{(z,w)\in\C^2:\pi|z|^2\leq a \text{ and } \pi|w|^2\leq b\right\}$$

is the \textit{polydisk} $P(a,b).$ 

\item Let $\Omega$ be the triangle with vertices $(0,0), (a,0), \text{ and } (0,b).$ Then 
$$X_{\Omega}=\left\{(z,w)\in\C^2:\pi\left(\frac{|z|^2}{a}+\frac{|w|^2}{b}\right)
\leq 1\right\}$$

is the \textit{ellipsoid} $E(a,b).$ In particular, $B(c):=E(c,c)$ is the $4$-ball of radius $\sqrt{\frac{c}{\pi}}.$ We will call $c$ the \textit{capacity} of the ball.
\end{itemize}

In Section 2, we introduce some combinatorial notions relating to convex and concave toric domains that we will need to produce obstructions to symplectic embeddings.

In Section 3, we state a theorem (3.1) for obstructing symplectic embeddings of convex toric domains that relates such questions to the combinatorial objects defined in Section 2. We apply it to obtain the following obstructions to symplectically embedding disjoint unions of polydisks into another convex toric domain: 

\begin{theorem}\label{thm:two-polydisks-ball}
Let $a\in [1, 2.869].$ Suppose that there is a symplectic embedding \newline $P(a,1)\coprod P(a,1)\hookrightarrow B(c).$ Then $$c\geq 3+\left(\frac{3+\sqrt{5}}{4}-\sqrt{\frac{3(1+\sqrt{5})}{8}}\right)(a-1)\approx 3+(0.207)(a-1).$$
\end{theorem}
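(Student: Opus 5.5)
The plan is to apply the embedding obstruction of Section 3 (Theorem 3.1), which generalizes Hutchings' \emph{Beyond ECH Capacities} criterion to disjoint unions. For a convex generator $\Lambda$ in the target (here a lattice path for $B(c)$) and convex generators $\Lambda_1,\Lambda_2$ in the two domains, the criterion gives a lower bound on $c$ whenever three conditions hold: $I(\Lambda)=I(\Lambda_1)+I(\Lambda_2)$ (or the appropriate index inequality), certain combinatorial positivity/minimality conditions on the pair, and the action inequality $A_{B(c)}(\Lambda)\geq A_{P(a,1)}(\Lambda_1)+A_{P(a,1)}(\Lambda_2)$. For the ball, $A_{B(c)}(\Lambda)=c\cdot d$ when $\Lambda$ is the triangle path of degree $d$. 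For the polydisk, $A_{P(a,1)}$ of a convex path with edges in directions $(1,0)$ and $(0,1)$ is $a\cdot(\text{vertical extent})+(\text{horizontal extent})$, with the usual $e/h$ labelling on edges. So the key step is to choose the generators. Since $c\geq 3$ at $a=1$ is the volume/ECH-type constraint (two copies of $P(1,1)$ into $B(3)$), we want a family of generators with $d=3k$-ish, giving a bound of the form $c\geq 3+\kappa(a-1)$.

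The strange constant $\frac{3+\sqrt5}{4}-\sqrt{\frac{3(1+\sqrt5)}{8}}$ involves the golden ratio, so I expect it to come from a sequence of generators indexed by Fibonacci numbers, with the bound obtained as a limit. Concretely, I would take $\Lambda=$ the degree-$d_n$ triangle path with $d_n$ built from Fibonacci numbers. For each $\Lambda_i$ I would take a rectangle-type path for the polydisk with dimensions $(x_n,y_n)$, possibly with one corner modified so the index matches exactly. I would then write out $I(\Lambda_i)$, which is roughly $2(x+y+xy)$ plus corrections, and $I(\Lambda)=d^2+3d$. Index equality then forces a Pell-type equation of the form $d^2+3d\approx 4(x_n y_n+\dots)$. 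I would solve it by a recursion whose ratio tends to the golden ratio (or the related quadratic irrationals), and then check the remaining hypotheses of Theorem 3.1 for each member of the family.

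The action inequality gives
$$c\geq \frac{2(a\,y_n+x_n)}{d_n}.$$
Writing $a=1+t$, this is $c\geq \frac{2(x_n+y_n)}{d_n}+\frac{2y_n}{d_n}t$. One then optimizes over $n$, or takes $n\to\infty$, to get the best linear bound. The irrational slope arises as the limit of $2y_n/d_n$ (minus an offset coming from the first term exceeding 3 slightly). Alternatively it arises from optimizing a continuous parameter: choosing rectangle aspect ratios $y/x$ and balancing the two terms yields a square root. The restriction $a\leq 2.869$ should be precisely where a competing generator family, or the trivial/volume bound, overtakes this one, or where some hypothesis of Theorem 3.1 (e.g.\ the ordering of $a$ against the slopes of the path) fails.

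The main obstacle is verifying the non-index hypotheses of Theorem 3.1 for the whole family. These are the conditions ensuring that no ECH-cobordism-map term can factor through a degenerate splitting: for each alternative decomposition with smaller action, one must rule out index or genus coincidences. I would handle this by a direct combinatorial estimate comparing $I$ and $A$ for all competing convex generators, using convexity, namely that the rectangle path minimizes polydisk action among generators of given index. I would also use the golden-ratio recursion to keep error terms uniformly bounded for $a$ in $[1,2.869]$.
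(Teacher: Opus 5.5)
\textbf{The proposal has a genuine gap: it applies Theorem~\ref{thm:main-convex} in the wrong direction and omits the constraint that actually produces the bound.}

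That theorem takes as input the embedding, a minimal positive generator $\Lambda'$ (here $e_{11}^{d'}$) and an index split. It then asserts that \emph{some} convex generators $\Lambda^1,\Lambda^2$, with factorizations satisfying (a)--(g), exist. You do not choose $\Lambda^1,\Lambda^2$, and there are no hypotheses on the negative side to verify, since minimality is imposed only on $\Lambda'$. So evaluating the action inequality at rectangle paths of your choosing, $c\geq 2(ay_n+x_n)/d_n$, is not implied by anything. A valid bound must hold for \emph{every} pair compatible with the constraints: you must bound $\sum_i x(\Lambda^i)+a\,y(\Lambda^i)$ from below over all such pairs.

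The value $3$ at $a=1$ is not a volume or ECH-capacity bound. Volume gives only $c\geq 2$. ECH capacities give no more, since $c_k(P(1,1))=c_k(E(1,2))$ and two copies of $E(1,2)$ embed into $B(2+\varepsilon)$. The $3$ comes from the genus bound (e), which your plan never uses. For a column with top entry $e_{11}^d$ it reads $\sum_i x(\Lambda^i)+y(\Lambda^i)\geq 3d-1$. Combined with $dc\geq\sum_i x(\Lambda^i)+a\,y(\Lambda^i)$, this gives $c\geq 3-1/d$ at $a=1$. If you instead took the action-minimizing generator in each index, you would just be recomputing ECH capacities and could never get past $2$.

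The actual argument is an optimization against an adversarial factorization. Because (e) holds column by column and loses $1$ for each merged column, the work lies in controlling the factorization of $e_{11}^{d'}$.
\begin{itemize}
\item Superadditivity of $I$ on powers of $e_{11}$, together with Lemma~\ref{lem:repeated-column}, shows some column has $d\to\infty$.
\item Any repeated column must be $(e_{11}^r;e_{mn};\emptyset)$, up to swapping rows, with $mn=r^2$ and $m+n=3r-1$. This relation is where $\frac{3\pm\sqrt5}{2}$, the roots of $t^2-3t+1$, enters. Your Fibonacci instinct is not baseless, since the coprime solutions are $\{m,n\}=\{F_{2k-1}^2,F_{2k+1}^2\}$. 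But these enter as constraints on the unknown holomorphic current, not as test generators you choose.
\item Merging such a column with the large one and applying Lemma~\ref{lem:index-shift} gives $nx(\Lambda^1)+my(\Lambda^1)\geq dr$. In the rescaled variables $X_i=x(\Lambda^i)/d$, $Y_i=y(\Lambda^i)/d$, this yields $X_1+Y_1\geq\frac{3-\sqrt5}{2}$.
\item The empty entries of the repeated columns force a large column with $I(\Lambda^2)\geq I(\Lambda^1)$, hence $X_2Y_2\geq\frac14$.
\end{itemize}
The stated slope is the value of $\min\sum_i(X_i+aY_i)$ subject to $\sum_i(X_i+Y_i)\geq 3$, $\sum_iX_iY_i\geq\frac12$, $X_1+Y_1\geq\frac{3-\sqrt5}{2}$ and $X_2Y_2\geq\frac14$. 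The minimum is attained at $Y_1=0$, $X_2+Y_2=\frac{3+\sqrt5}{2}$, $X_2Y_2=\frac12$, giving $Y_2=\frac{3+\sqrt5}{4}-\sqrt{\frac{3(1+\sqrt5)}{8}}$. The case with no repeated column gives a larger bound.

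Finally, $2.869$ is merely where this bound beats the volume bound $2\sqrt a$; the argument itself is valid for all $a\geq 1$. The ``main obstacle'' you describe, checking hypotheses for a chosen family, corresponds to no step of a correct proof.
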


\begin{remark}
When $a=1,$ this bound is sharp, as two copies of $P(1,1)$ do in fact symplectically embed into $B(c)$ for any $c>3,$ as illustrated below.
\end{remark}

\begin{figure}[h]
\centering
\includegraphics[scale=0.5,alt={A red square to the left of a blue square. The square meet along a common edge. They sit inside of a triangle, representing an embedding of a disjoint union of two square polydisks into a ball}]{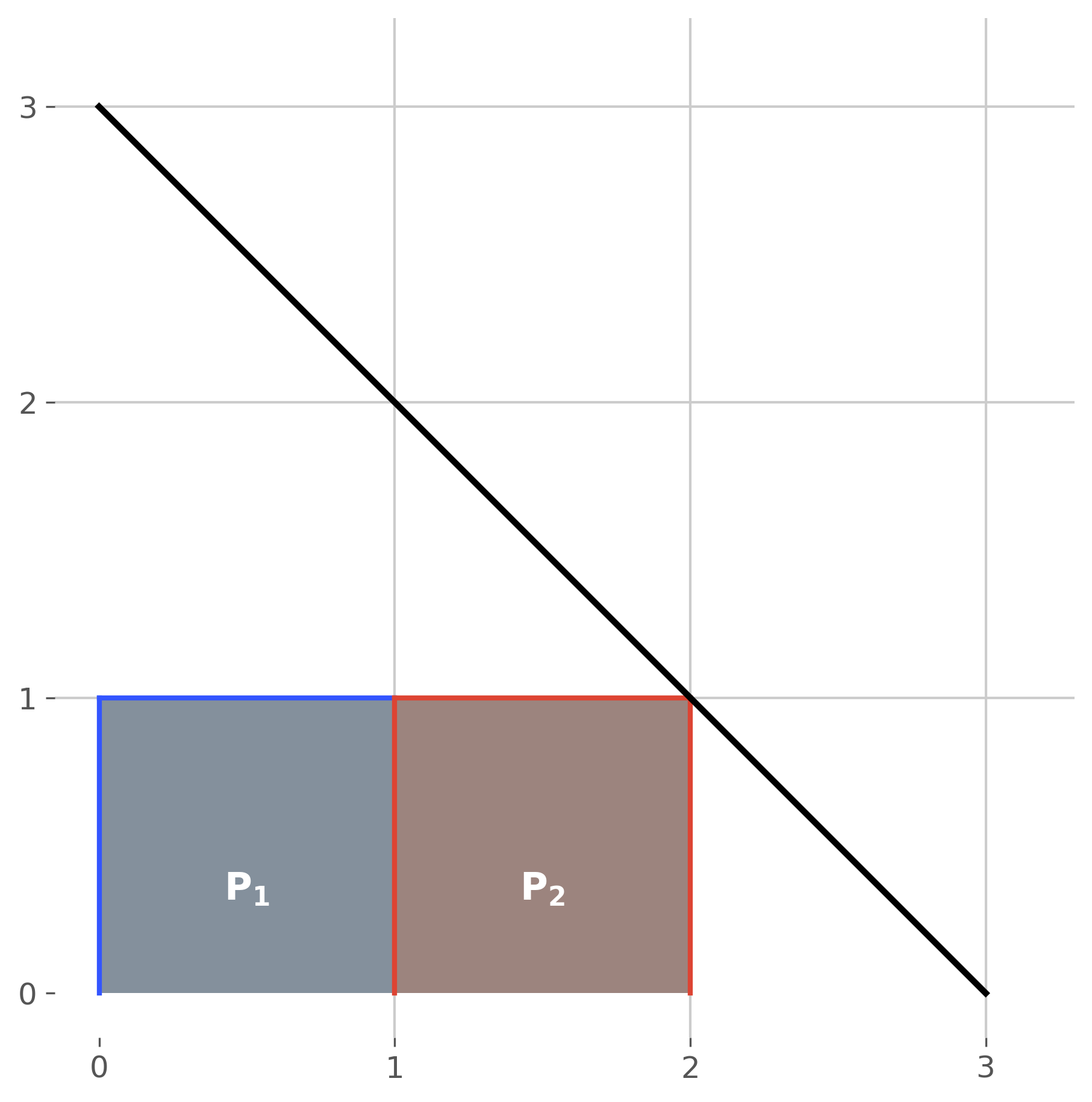}
\caption[An embedding of a disjoint union of two square polydisks into a ball.]{\small Here the preimage $\mu^{-1}(P_1)$ is the standard square polydisk $P(1,1)$ and the preimage $\mu^{-1}(P_2)$ is the product of the disk of area $1$ with an annulus of area $1$, and hence any polydisk smaller than $P(1,1)$ can be embedded into it (since a disk can be embedded into an annulus of any larger area via an area-preserving map).}
\label{fig:polydisk-remark}
\end{figure}

\begin{remark} In the proof of this theorem, we give a lower bound for $c$ that is valid for all $a\geq 1$. For large values of $a$, the bound is weaker than the volume constraint \newline $\text{vol}(B(c))\geq 2 \text{vol}(P(a,1))$. The interval $[1,2.869]$ is (a numerical approximation of) where our bound is stronger than the volume constraint. Other numerical constants throughout arise similarly.
\end{remark}

\begin{theorem}\label{thm:two-polydisks-polydisk}
Let $a\in [1,2.348].$ If $P(a,1)\coprod P(a,1)$ symplectically embeds into $P(c,c)$, then $$c\geq \begin{cases} 2 & 1\leq a\leq 2\\ 2+\left(\frac{23-\sqrt{401}}{24}\right)(a-1) & 2\leq a\leq 2.348.\end{cases}$$
\end{theorem}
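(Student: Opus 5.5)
We argue via the disjoint-union generalization of Hutchings' obstruction stated in Theorem 3.1. If $P(a,1)\coprod P(a,1)\hookrightarrow P(c,c)$, then the ECH cobordism map, together with the combinatorial machinery of Section 2, yields the following. For every convex generator $\Lambda$ for the target $P(c,c)$ with appropriate index, there are convex generators $\Lambda_1,\Lambda_2$ for the two copies of $P(a,1)$ satisfying three conditions. The indices (or lattice point counts) add up correctly: $I(\Lambda)=I(\Lambda_1)+I(\Lambda_2)+\cdots$. The action inequality $A_{P(c,c)}(\Lambda)\ge A_{P(a,1)}(\Lambda_1)+A_{P(a,1)}(\Lambda_2)$ holds. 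Finally, the ``$\le$'' combinatorial condition relating $\Lambda$ to the pair $(\Lambda_1,\Lambda_2)$, such as $x$ and $y$ edge counts and $e$/$h$ labels, must be satisfied. Recall that for a polydisk $P(a,1)$ the action of a convex generator with edge vectors summing to horizontal extent $x$ and vertical extent $y$ is $A=ay+x$ in the appropriate normalization, i.e.\ $\Omega$-length. Likewise, in $P(c,c)$ the action is $c(x+y)$.

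The lower bound $c\ge 2$ for $1\le a\le 2$ will come from the simplest choice: $\Lambda$ the generator with index $I=4$, e.g.\ $e_{1,0}e_{0,1}$ type (action $2c$, a rectangle with unit sides). We will check that any admissible decomposition into two generators forces each $\Lambda_i$ to have index at least $2$, hence action at least $\min(a,1)=1$ each. This gives $2c\ge\ldots$; more precisely, the constraint gives $c\ge 2$ directly. Equivalently, use $\Lambda=e_{1,0}$ with $I=2$, whose split must give both pieces nonempty with action $\ge1$, so $c\ge 2$. This part is essentially the ECH capacity $c_1$ additivity for disjoint unions. It is sharp-looking since $2P(1,1)$ clearly fits in $P(2,2)$, and also $P(2,1)\coprod P(2,1)\subset P(2,2)$.

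For $2\le a\le 2.348$ we will choose a larger generator $\Lambda$ for $P(c,c)$, namely a convex lattice path with more edges whose index is large, say with lattice point count $L$. We then enumerate all ways to split $L$ (with the index defect) among $\Lambda_1,\Lambda_2$. For each split we minimize the $P(a,1)$ action over convex generators of the required index; this amounts to minimizing $ay+x$ subject to a lattice point count, which is optimized by near-rectangular paths. The resulting inequality $c\cdot(\text{length of }\Lambda)\ge\min_{\text{splits}}\bigl(A(\Lambda_1)+A(\Lambda_2)\bigr)$ will be piecewise linear in $a$. The curious constant $\frac{23-\sqrt{401}}{24}$ suggests that the bound is obtained not from a single $\Lambda$ but by optimizing a family, or by a real parameter. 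For instance, we can use a generator with a non-axis edge whose $P(c,c)$-action involves $c$ quadratically after combining with the volume constraint $2c^2\ge 2a$. So we expect to set up the inequality $c\ge \alpha+\beta a$ from the ECH argument and intersect it with a second (quadratic) constraint, producing the surd. We would then verify that the resulting bound exceeds the volume bound $c\ge\sqrt{a}$ exactly up to $a\approx2.348$.

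The main obstacle is the combinatorial step: identifying the right $\Lambda$ and proving that every admissible pair $(\Lambda_1,\Lambda_2)$ satisfies the action lower bound. This requires ruling out the many splits, including those where one factor is empty or uses hyperbolic labels, using the $\le$ conditions of Section 2. We would first try the generator of index $2k$ with $k$ minimal making the ECH count exceed $2$, and do a case analysis on $I(\Lambda_1)$.
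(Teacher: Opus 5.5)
There is a genuine gap, and it starts with the case $1\le a\le 2$. With $\Lambda'=e_{10}$ you have $I(\Lambda')=2$. Theorem~\ref{thm:main-convex} asks you to split $2$ into two nonnegative even integers, so one negative generator is empty and you only get $c\ge 1$. Your other choice fails too. $I(e_{10}e_{01})=6$, not $4$. The index-$4$ generators $e_{11},e_{10}^2,e_{01}^2$ all have action $2c$ on $P(c,c)$, so even granting two nonempty pieces of action $\ge 1$ you get $2c\ge 2$, i.e.\ $c\ge 1$. Using $e_{10}e_{01}$ with the split $(4,2)$ gives only $c\ge 3/2$.

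This is structural, not an arithmetic slip. Index plus action alone is essentially the ECH-capacity obstruction. At $a=1$ that gives bounds like $c\ge 3/2$ (from $c_3$), and asymptotically only the volume bound $c\ge\sqrt{2a}$, which is below $2$ for $a<2$. The bound $c\ge 2$ needs the genus bound (e), used in a limit. The paper takes $\Lambda'=e_{10}^de_{01}^d$ with $I_1=I_2=d^2+2d$. For a merge of $k$ columns with top entry $e_{10}^Ae_{01}^B$, (e) gives $\sum_i x(\Lambda^i)+y(\Lambda^i)\ge 2(A+B)-k$. Since $\mathcal{A}_{P(a,1)}(\Lambda)=x(\Lambda)+a\,y(\Lambda)\ge x(\Lambda)+y(\Lambda)$, the action inequality then gives $c\ge 2-k/(A+B)$. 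For any fixed generator this stays strictly below $2$. You reach $2$ only as $d\to\infty$, after showing the loss $k$ is $o(d)$. That is where Lemma~\ref{lem:repeated-column} enters, together with the case split into no repeated column, repeated columns of one form, and repeated columns of both forms.

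For $2\le a\le 2.348$ your plan lacks the mechanism behind the stated line. A finite enumeration for one $\Lambda'$ produces finitely many inequalities $c\cdot p\ge q+r\,a$ with integers $p,q,r$. The resulting bound is therefore piecewise linear in $a$ with rational slopes and cannot yield the slope $\frac{23-\sqrt{401}}{24}$. Intersecting with the volume constraint only accounts for the endpoint $2.348$; note that this constraint is $c^2\ge 2a$, i.e.\ $c\ge\sqrt{2a}$, not $2c^2\ge 2a$ or $c\ge\sqrt a$.

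In the paper the slope comes from a real optimization problem in the $d\to\infty$ limit. It arises in the sub-case where columns $\begin{pmatrix}e_{10}^Re_{01}^S\\ e_{rs}\\ \emptyset\end{pmatrix}$ of only one form repeat.
\begin{itemize}
\item Lemma~\ref{lem:repeated-column} gives $rs=2RS$ and $r+s=2(R+S)-1$.
\item Merging such a column with the remaining big column and applying Lemma~\ref{lem:index-shift} puts $(x(\Lambda^1),y(\Lambda^1))$ on or above the line $sx+ry=R(d-B)+S(d-A)+O(1)$.
\item Hence $x(\Lambda^1)+y(\Lambda^1)\ge\frac14\min(d-A,d-B)+O(1)$.
\item This is combined with the genus bound, $X_2Y_2\ge\frac12$ (from the even index split) and $X_1Y_1+X_2Y_2\ge(1-u)(1-v)$.
\end{itemize}
Minimizing the normalized action under these constraints gives $2+\frac{23-\sqrt{401}}{24}(a-1)$. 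Your outline has none of these ingredients: the asymptotic genus bound, the repeated-column lemma, or this tangency constraint. Without them it does not establish either part of the statement.
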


\begin{remark} For $1\leq a\leq 2,$ this bound is sharp by a similar moment map picture as for the previous result (and follows trivially from the case $a=1$).
\end{remark}

We also state a similar theorem for concave toric domains (Theorem \ref{thm:main-concave}) in this section. 

In Section 4, we review some facts about embedded contact homology and how they relate to convex and concave toric domains.

In Section 5, we prove Theorem \ref{thm:main-convex} and the analogous theorem for concave domains (Theorem \ref{thm:main-concave}) by applying the ECH cobordism map to the cobordism determined by a symplectic embedding.

In Section 6, we use similar techniques to obstruct the existence of isotopies between two given symplectic embeddings. Obstructions obtained this way may be sharp, as in the following:
\begin{theorem}\label{thm:reflection-isotopy}
Let $r:\C^2\to\C^2$ be the reflection $r(z,w)=(w,z).$ Suppose that $X_{\Omega^-}$ and $X_{\Omega^+}$ are convex toric domains such that $X_{\Omega^-}\cup \left(r\left(X_{\Omega^-}\right)\right)\subseteq X_{\Omega^+}$. Let $\iota:X_{\Omega^-}\hookrightarrow X_{\Omega^+}$ be the inclusion map. Then $\iota$ and $r\circ\iota,$ considered as symplectic embeddings of $X_{\Omega^-}$ into $X_{\Omega^+}$, are isotopic by a path of symplectic embeddings if and only if there exists a positive real number $R$ such that $X_{\Omega^-}\subseteq B(R)\subseteq X_{\Omega^+}.$ 
\end{theorem}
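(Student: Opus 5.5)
The proof has two directions: an explicit isotopy for the ``if'' direction, and an ECH obstruction for the ``only if'' direction.

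\emph{If direction.} Suppose $X_{\Omega^-}\subseteq B(R)\subseteq X_{\Omega^+}$. The ball is preserved by every element of $U(2)$, and $U(2)$ is connected. So we choose a path $A_t\in U(2)$ from the identity to the unitary matrix representing $r$; for instance $A_t=\exp(t\pi J/2)$ composed with a diagonal phase correction, since $r$ has determinant $-1$ but still lies in $U(2)$. Then $A_t\circ\iota$ is a path of symplectic embeddings $X_{\Omega^-}\hookrightarrow B(R)\subseteq X_{\Omega^+}$ from $\iota$ to $r\circ\iota$. If strict inclusions are needed, we shrink slightly.

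\emph{Only if direction.} Suppose $\iota$ and $r\circ\iota$ are isotopic, and that no $R$ with $X_{\Omega^-}\subseteq B(R)\subseteq X_{\Omega^+}$ exists. Let $R_0=\max\{x+y:(x,y)\in\Omega^-\}$, so that $B(R_0)$ is the smallest ball containing $X_{\Omega^-}$. The failure means $B(R_0)\not\subseteq X_{\Omega^+}$. Equivalently, the line $x+y=R_0$ exits $\Omega^+$: there is an edge direction where the supporting value $\max_{\Omega^+}\langle v,\cdot\rangle$ is smaller than $R_0\cdot\max(v_1,v_2)$ for some positive vector $v$. After perturbing, we may assume both domains are nice, with polygonal defining curves and generic irrational slopes near the relevant points.

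The strategy follows the methods of Section 5, as in Theorem \ref{thm:main-convex}. An isotopy of embeddings induces the same cobordism map on ECH, since the completed cobordisms are deformation equivalent. However, the reflection acts on generators by swapping the two elliptic orbits and reversing lattice-path data. Concretely, choose a convex generator $\Lambda^+$ for $X_{\Omega^+}$ whose optimal ``edge'' realizes the $U$-map or cobordism-map pairing with some concave/convex generator $\Lambda^-$ of $X_{\Omega^-}$. For $\iota$ the relevant broken holomorphic curve has a particular asymmetric combinatorial type, specified by which edges are matched. For $r\circ\iota$ it has the mirror type. Using the action filtration and the index and intersection-positivity constraints from Theorem \ref{thm:main-convex}, we show that for large degree the only possible $J$-holomorphic current contributing in one cobordism has a specific partition. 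This partition distinguishes the two maps unless an action inequality $\mathcal{A}(\Lambda^-)\le\mathcal{A}(\Lambda^+)$ holds for the symmetric (ball-like) generator $\Lambda_{\text{ball}}$. That inequality says $\Omega^+$ contains the triangle $\{x+y\le R_0\}$. Along the isotopy, a family of curves must break, and this breaking is ruled out precisely by the failed inclusion.

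\emph{Main obstacle.} The hardest step is making the ``only if'' rigorous: showing that some ECH-derived invariant is genuinely changed by $r$ when $B(R_0)\not\subseteq X_{\Omega^+}$. The cobordism map on ECH homology itself is likely the same for both embeddings, since both domains have ECH isomorphic to that of $S^3$. So the argument must use a finer, chain-level or filtered invariant. The approach I would try first is to track the moduli space of index-$0$ curves from a chosen $\Lambda^+$ to $\Lambda^-$ through the isotopy. Their count is invariant under the isotopy, but action/intersection constraints force it to be nonzero for $\iota$ and zero for $r\circ\iota$ when the ball fails to fit. Identifying the right generators near the point where $x+y=R_0$ exits $\Omega^+$ is where the combinatorics of Section 2 enters.
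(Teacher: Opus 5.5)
Your ``if'' direction matches the paper's argument and is fine. The ``only if'' direction has a genuine gap. You never pin down a concrete invariant, and the one mechanism you do propose is false. You assert that the count of index-$0$ curves from a chosen $\Lambda^+$ to $\Lambda^-$ is invariant along the isotopy. Chain-level cobordism maps are only invariant up to chain homotopy, and here they visibly change. For suitable $J$, the trivial cylinder along a coordinate axis (cut out transversely by automatic transversality) gives $\phi_0(e_{10}^+)=e_{10}^-$ for $\iota$. The index computation $I(e_{10}^2)-I(e_{10}e_{01})=-2$ rules out an $e_{01}^-$ term. By the same argument applied to $r(X_{\Omega^-})$, $\phi_1(e_{10}^+)=e_{01}^-$ for $r\circ\iota$. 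Both statements hold \emph{whether or not} a ball separates the domains. So if such counts were isotopy-invariant, $\iota$ and $r\circ\iota$ would never be isotopic, contradicting your own unitary isotopy. Your heuristic that ``a family of curves must break'' is the right intuition, but it has to be made precise rather than replaced by an invariance claim.

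The missing idea is to exploit this chain-level discrepancy through the degree-$(+1)$, action-nonincreasing chain homotopy $K$ that the isotopy induces: $\partial^-K+K\partial^+=\phi_1-\phi_0$. There are no index-$1$ generators, so $\partial^+e_{10}^+=0$, and hence $\partial^-K(e_{10}^+)=e_{10}^-+e_{01}^-\neq 0$. This forces $K(e_{10}^+)=h_{11}^-$, the unique index-$3$ convex generator, and likewise $K(e_{01}^+)=h_{11}^-$. The action filtration then gives
$$R:=\mathcal{A}_{\Omega^-}(h_{11})\leq \min\bigl(\mathcal{A}_{\Omega^+}(e_{10}),\mathcal{A}_{\Omega^+}(e_{01})\bigr)=\min\bigl(y(\Omega^+),x(\Omega^+)\bigr).$$
Since $R=\max_{\Omega^-}(x+y)$, convexity of $\Omega^+$ yields $X_{\Omega^-}\subseteq B(R)\subseteq X_{\Omega^+}$.

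None of your heavier machinery is needed: no large-degree generators, no partition or intersection-positivity constraints from Theorem \ref{thm:main-convex}, no polygonal perturbation, and no analysis of edge directions where $x+y=R_0$ leaves $\Omega^+$. By convexity, failure of the ball condition is simply $R_0>\min(x(\Omega^+),y(\Omega^+))$. You would still need to handle the dependence of the chain maps on $J$, for instance by composing with the chain homotopy coming from a path of almost complex structures, as the paper does.
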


We conclude with an appendix, where we see how intersection positivity can put more constraints on $J$-holomorphic curves that may exist in a completed cobordism. In particular, we see that certain moduli spaces $\mathcal{M}(\alpha,\beta)$ often only contain a single element satisfying some mild conditions. We also see that many possibilities for $J$-holomorphic curves that are not obstructed by Theorem \ref{thm:main-convex} may nonetheless be ruled out. 

\section*{Aknowledgements}
The author would like to thank his thesis advisor Michael Hutchings for countless helpful conversations that made this work possible.

\section{Convex and Concave Generators}
In this section, we review some combinatorial definitions introduced in \cite{Beyond} relating to convex toric domains that we will need to study the symplectic embedding problems described in the previous section. We also introduce analogous notions for concave toric domains.

\begin{definition}\label{def:convex-integral-path}
A \textit{convex integral path} is a simple and piecewise linear curve $\Lambda$ in the first quadrant $\R_{\geq 0}^2$ from a point $(0,y(\Lambda))$ to a point $(x(\Lambda),0)$ such that 
\begin{itemize}
\item The above endpoints and all vertices of $\Lambda$ lie in $\Z_{\geq 0}^2.$
\item The plane region bounded by the coordinate axes and $\Lambda$ is convex.
\item Each edge of $\Lambda$ (segment between two consecutive lattice points) is either vertical or has nonpositive slope. 
\end{itemize}
\end{definition}

\begin{definition}\label{def:convex-generator}
A \textit{convex generator} is a convex integral path where each edge is labeled either $``e"$ or $``h"$, and no two edges that have the same slope may both be labeled $``h"$, and no vertical or horizontal edges may be labeled $``h".$ Two convex generators are considered equivalent when they have the same underlying convex integral path, and the labelings are the same up to permutations among the edges with a fixed slope. Convex generators will only be considered modulo this equivalence.

By associating to each factor $e_{ab}$ or $h_{ab}$ an edge vector $(a,-b)$ of the same labeling, we may identify a convex generator with a commutative formal product of terms $e_{ab}$ and $h_{ab}$, where no factor $h_{ab}$ may be repeated, and no factor $h_{10}$ or $h_{01}$ may appear. Note that in either case, $a$ and $b$ are coprime by our definition of an edge of a convex integral path. Given two convex generators with no $h_{ab}$ factor in common, we define their product to be the convex generator obtained considering them as products of $e_{ab}$ and $h_{ab}$ terms and formally multiplying these products.
\end{definition}

Convex generators are related to convex toric domains through the following notion.

\begin{definition}\label{def:convex-action}
If $\Lambda=e_{ab}$ or $h_{ab}$ and $\Omega\subset\R_{\geq 0}^2$ defines a convex toric domain $X_{\Omega}$ as in Section 1, define the \textit{action} $\mathcal{A}_{\Omega}(\Lambda)$ as the value of $c$ for which the line $bx+ay=c$ is externally tangent to $\Omega.$ For general $\Lambda$, the action $\mathcal{A}_{\Omega}(\Lambda)$ is the sum of the actions of each $e_{ab}$ and $h_{ab}$ appearing in $\Lambda$ (with multiplicity).
\end{definition}

\begin{example} For any $\Omega$ defining a convex toric domain $X_{\Omega}$, we have $\mathcal{A}_{\Omega}(e_{10})=y(\Omega)$ and $\mathcal{A}_{\Omega}(e_{01})=x(\Omega),$ where $(x(\Omega),0)$ and $(0,y(\Omega))$ are respectively where the defining curve for $\Omega$ meets the $x$- and $y$-axes.
\end{example}

We also need the notion of the index of a convex generator. 
\begin{definition}\label{def:convex-index} The \textit{index} of a convex generator $\Lambda$ is defined as 
$$I(\Lambda)=2(L(\Lambda)-1)-h(\Lambda),$$
where $L(\Lambda)$ is the number of lattice points (including on the boundary) in the plane region bounded by the coordinate axes and the path $\Lambda$, and $h(\Lambda)$ is the number of edges of $\Lambda$ that are labeled $``h".$
\end{definition}

\begin{definition}\label{def:convex-minimal}
A convex generator $\Lambda$ is \textit{minimal} for $\Omega$ if all edges of $\Lambda$ are labeled $``e"$ and if $\Lambda'\neq\Lambda$ is a convex generator with all edges labeled $``e"$ and $I(\Lambda')=I(\Lambda)$, then $\mathcal{A}_{\Omega}(\Lambda')>\mathcal{A}_{\Omega}(\Lambda).$ 
\end{definition}

We shall also need analogous notions for concave domains.

\begin{definition}\label{def:concave-integral-path}A \textit{concave integral path} is a path $\Lambda$ as in Definition \ref{def:convex-integral-path}, except that:
\begin{itemize}
\item No edges of $\Lambda$ are horizontal or vertical.
\item The compact region bounded by the coordinate axes and $\Lambda$ has convex complement (in $\R_{\geq 0}^2)$ instead of being convex itself.
\end{itemize}
\end{definition}

\begin{definition}\label{def:concave-generator} A concave generator is a concave integral path where each edge is labeled either $``e"$ or $``h"$ with the same restrictions and notion of equivalence as in Definition \ref{def:convex-generator} (except that the condition that no vertical or horizontal edges may be labeled $``h"$ is now vacuous).
\end{definition}

\begin{definition}\label{def:concave-action} When $\Omega$ defines a concave toric domain and $\Lambda$ is a concave generator, the action $\mathcal{A}_{\Omega}(\Lambda)$ is defined the same as for the convex case, except that ``externally tangent'' is replaced by ``internally tangent''. We may identify a concave generator with a commutative formal product of $h_{ab}$ and $e_{ab}$ terms and define products of concave generators as in the convex case.
\end{definition}

\begin{definition}\label{def:concave-index} When $\Lambda$ is a concave generator, its index is defined as 

$$I(\Lambda)=2L^{\text{int}}(\Lambda)+h(\Lambda),$$

where $L^{\text{int}}(\Lambda)$ is the number of lattice points in the plane region bounded by $\Lambda$ and the coordinate axes that do not lie on $\Lambda$, and $h(\Lambda)$ has the same meaning as in the convex case. 
\end{definition}

\begin{definition}\label{def:concave-maximal} We say that a concave generator $\Lambda$ is \textit{maximal} for $\Omega$ when it uniquely maximizes action for $\Omega$ among all concave generators with the same index and all edges labeled $``e"$.
\end{definition}

Finally, we have some notions that will be useful for both convex and concave generators.

\begin{definition}\label{def:multiplicity} For either a concave or a convex generator $\Lambda$, we define its \textit{multiplicity} $m(\Lambda)$ to be its number of edges. Equivalently, when we identify $\Lambda$ with a formal product of $e_{ab}$ and $h_{ab}$ terms, $m(\Lambda)$ is the sum of the exponents of each of these factors that appear.
\end{definition}

\begin{definition}\label{def:are}
When $\Lambda$ is a concave or convex integral path or generator, define $\text{Area}(\Lambda)$ to be the area of the compact region in $\R_{\geq 0}^2$ bounded by $\Lambda$ and the coordinate axes.
\end{definition}

\section{Calculations using Convex and Concave Generators}
In this section, we will state and use a theorem that relates the existence of a symplectic embedding of a disjoint union of convex toric domains into another convex toric domain to combinatorial questions about convex generators. The following is a generalization of Theorem 1.19 in \cite{LectureNotes} to such disconnected domains.

\begin{theorem}\label{thm:main-convex}
Let $X_{\Omega_i}$ for $i=1,\ldots, m$ and $X_{\Omega'}$ be convex toric domains. Suppose that there is a symplectic embedding $\varphi:\coprod_{i=1}^m X_{\Omega_i}\hookrightarrow X_{\Omega'}.$ Let $\Lambda'$ be a convex generator which is minimal for $X_{\Omega'}.$ Let $I_1,\ldots,I_m$ be nonnegative even integers such that $I_1+\ldots+I_m=I(\Lambda').$ Then there exist convex generators $\Lambda^1,\ldots,\Lambda^m$  and factorizations $\Lambda'=\Lambda'_1\ldots\Lambda_n'$ and $\Lambda^i=\Lambda_1^i\ldots\Lambda_n^i$ for each $i$ such that the following conditions hold:

\begin{itemize}
\item{(a)} For each $i\in\{1,\ldots, m\},$ we have $I(\Lambda^i)=I_i.$
\item{(b)} For each $j\in\{1,\ldots,n\}$, we have $\sum_{i=1}^m I(\Lambda_j^i)=I(\Lambda'_j).$
\item{(c)} For each subset $S\subseteq\{1,\ldots,n\}$, we have $\sum_{i=1}^m I(\prod_{j\in S}\Lambda_j^i)=I(\prod_{j\in S}\Lambda_j').$
\item{(d)} For each $j\in\{1,\ldots,n\}$, we have $\sum_{i=1}^m \mathcal{A}_{\Omega_i}(\Lambda_j^i)\leq \mathcal{A}_{\Omega'}(\Lambda_j').$
\item{(e)} For all $j\in\{1,\ldots,n\}$, we have $\sum_{i=1}^m x(\Lambda_j^i)+y(\Lambda_j^i)-\frac{1}{2}h(\Lambda_j^i)\geq x(\Lambda_j')+y(\Lambda_j')+m(\Lambda_j')-1.$ We will refer to this inequality as the ``genus bound''.
\item{(f)} Given $k,j\in\{1,\ldots,n\}$ if $\Lambda_k'\neq\Lambda_j'$ or if $\Lambda_k^i\neq\Lambda_j^i$ for any $i\in\{1,\ldots,m\},$ then $\Lambda_k^i$ and $\Lambda_j^i$ have no elliptic factors $e_{ab}$ in common for any $i\in\{1,\ldots,m\}.$
\item{(g)} For all $j\in\{1,\ldots,n\}$ and all $i\in\{1,\ldots,m\}$, the following inequality holds whenever $I(\Lambda'_j)>2$: $$\text{min}(x(\Lambda'_j),y(\Lambda'_j))\geq\text{min}(x(\Lambda^i_j),y(\Lambda^i_j)).$$
\end{itemize}
\end{theorem}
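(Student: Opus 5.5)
We follow the strategy of Hutchings' proof of Theorem 1.19 in \cite{LectureNotes}, adapted to a cobordism whose negative end is disconnected. First, we perturb: replace each $X_{\Omega_i}$ and $X_{\Omega'}$ by nearby domains with smooth defining curves that are transverse to radial rays, and whose boundaries have nondegenerate contact forms. Using the standard Morse--Bott perturbation of the torus families, ECH generators up to any fixed action bound then correspond to convex generators. Under this correspondence the ECH index relative to the empty set is $I(\Lambda)$ and the symplectic action is approximately $\mathcal{A}_\Omega(\Lambda)$. Removing the interior of $\varphi(\coprod_i X_{\Omega_i})$ from $X_{\Omega'}$ gives a weakly exact symplectic cobordism $W$ from $Y'=\partial X_{\Omega'}$ to $Y=\coprod_i\partial X_{\Omega_i}$.

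The ECH of a disjoint union is the tensor product, and its generators are tuples $(\alpha_1,\ldots,\alpha_m)$ with total index $\sum_i I(\alpha_i)$. Since each $ECH(\partial X_{\Omega_i})$ has a single $\Z/2$ in each even degree, there is a unique nonzero class of $ECH(Y)$ in multidegree $(I_1,\ldots,I_m)$. Minimality of $\Lambda'$ makes $\Lambda'$ a cycle representing the generator of $ECH_{I(\Lambda')}(Y')$, so that $U^{I(\Lambda')/2}[\Lambda']=[\emptyset]$. The cobordism map $\Phi_W$ intertwines the $U$ maps and sends $[\emptyset]$ to $[\emptyset]$. On $Y$, choosing $U$ acting on the $i$-th factor, the maps $U_i$ commute with $\Phi_W$ up to homotopy. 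By the isomorphism in each degree, $\Phi_W[\Lambda']$ must then have nonzero component in every multidegree summing to $I(\Lambda')$, in particular in $(I_1,\ldots,I_m)$. The holomorphic curves axiom then gives a broken $J$-holomorphic current $\mathcal{C}$ in the completion $\overline{W}$ from $\Lambda'$ to some tuple $(\Lambda^1,\ldots,\Lambda^m)$ with $I(\Lambda^i)=I_i$. This proves (a).

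Next, we decompose $\mathcal{C}$. Since $I(\mathcal{C})=0$ (the map preserves grading), the index inequality forces the levels to be trivial except for one cobordism level, after generic choices in the standard way. Write the cobordism level as a union of connected components $C_1,\ldots,C_n$, with trivial cylinders grouped as needed. Component $C_j$ has positive ends $\Lambda'_j$ and negative ends $\Lambda^i_j$ at $\partial X_{\Omega_i}$; this yields the factorizations. The conditions then follow as below.
\begin{itemize}
\item[(d)] Stokes' theorem gives nonnegativity of energy, $\mathcal{A}(\Lambda'_j)-\sum_i\mathcal{A}(\Lambda^i_j)\ge 0$ for each component.
\item[(b), (c)] In the convex toric setting every relevant index is nonnegative, and the index is superadditive on unions up to nonnegative linking/intersection terms. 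Total index $0$ then forces each component, and each union of components, to have index exactly zero. The relative first Chern class and $Q_\tau$ terms of a union decompose across the components of the negative boundary, giving $\sum_i I(\prod_{j\in S}\Lambda^i_j)=I(\prod_{j\in S}\Lambda'_j)$.
\item[(f)] The ECH partition conditions for multiply covered elliptic orbits prevent distinct components from sharing elliptic orbits unless they are identical covers.
\item[(e)] For each $C_j$ of index zero, the relation $\operatorname{ind}(C_j)=0$ combined with $I=\operatorname{ind}+2\delta\ge$ and the adjunction/Euler characteristic formula bounds the number of ends. In the toric trivialization, $x+y$ counts the $c_1$ contribution, and hyperbolic ends contribute $\tfrac12$ each. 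Genus $\ge0$ and connectedness then give the stated inequality.
\item[(g)] This follows from the appendix-type argument when $I(\Lambda'_j)>2$: compare winding or rotation numbers, i.e.\ the writhe bound, on the ends at the $z$- and $w$-axis orbits. Intersection positivity of $C_j$ with the holomorphic axes $\{z=0\}$ and $\{w=0\}$ in $\overline{W}$ gives the inequalities on the min of $x,y$.
\end{itemize}

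We expect two steps to be the main obstacles. The first is justifying that $\Phi_W[\Lambda']$ has a nonzero component in the prescribed multidegree $(I_1,\ldots,I_m)$. This requires controlling how the $U$ maps on the separate components interact with the cobordism map. We would try to show that $U$ defined using a point in the $i$-th negative end is chain-homotopic through $W$ to $U$ on $Y'$, and then apply these maps in an order adapted to $(I_1,\ldots,I_m)$. The second is the additivity in (c): one must check that the intersection and linking corrections between the negative ends of different components vanish. This should hold because the negative ends lie in disjoint manifolds, so that only contributions at $Y'$ remain, and those are accounted for by $I(\prod_{j\in S}\Lambda'_j)$.
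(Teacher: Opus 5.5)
Your plan for (a)--(f) follows the paper's proof. The $(I_1,\ldots,I_m)$ component of $\Phi_W[\Lambda']$ is nonzero, as one sees by applying $U_1^{I_1/2}\cdots U_m^{I_m/2}$. Your first ``obstacle'' is just the standard property that cobordism maps commute with the $U$-map of any negative component lying in the same component of $W$, and $W$ is connected. The components are unbroken and have index zero via Hutchings' $L$-tameness argument from \emph{Beyond ECH capacities}. That argument, not genericity alone, is what makes the indices of possibly multiply covered pieces nonnegative. The remaining items go as you say: index additivity across distinct boundary components for (b)--(c), Stokes for (d), and the union-bound/partition conditions for (f). In (e) you leave implicit the evaluation of $w_\tau(C_j)$ from the partition conditions forced by $\text{ind}(C_j)=I(C_j)=0$. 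Positive ends are simple, so $n^+(C_j)=m(\Lambda'_j)$ and $w^+_\tau(C_j)=0$, and each negative elliptic orbit carries a single end. This computation cancels the $\sum_i m(\Lambda^i_j)$ terms coming from Pick's formula and produces $m(\Lambda'_j)-1$; adjunction and $g\ge 0$ alone do not.

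The genuine gap is in (g). You propose intersecting $C_j$ with the holomorphic axes $\{z=0\}$ and $\{w=0\}$ in $\overline W$. For an arbitrary symplectic embedding $\varphi$ and a generic admissible $J$, no such curves exist. The axes are not $J$-holomorphic, and they meet $\varphi(\partial X_{\Omega_i})$ along arbitrary loops rather than Reeb orbits, so they do not even give surfaces with cylindrical ends in $\overline W$. Moreover, (g) must hold for every $i$, so for each $i$ you need an auxiliary curve whose negative end lies on $\partial X_{\Omega_i}$ specifically.

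The missing idea is to manufacture this curve with the same cobordism-map argument. Run it with $e_{10}$ (resp.\ $e_{01}$) in place of $\Lambda'$ and the index split $I_i=2$, $I_k=0$ for $k\neq i$. For the same $J$ this gives a connected index-zero curve $C_2$ from $e_{10}$ to $e_{10}$ or $e_{01}$ on $\partial X_{\Omega_i}$. The hypothesis $I(\Lambda'_j)>2$ is exactly what guarantees $C_2\neq C_j$. Then $I(C_j+C_2)\ge I(C_j)+I(C_2)+2\,C_j\cdot C_2\ge 0$. Combine this with (b) and the lattice-point identities $I(\Lambda e_{10})=I(\Lambda)+2y(\Lambda)+2$ and $I(\Lambda e_{01})=I(\Lambda)+2x(\Lambda)+2$. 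This gives $y(\Lambda'_j)\ge y(\Lambda^i_j)$ or $y(\Lambda'_j)\ge x(\Lambda^i_j)$, and repeating with $e_{01}$ yields (g). Your winding-number comparison would also work once $C_2$ is available; it is the appendix's linking-number argument and even gives the sharper (g$'$). Without $C_2$, though, there is nothing to intersect $C_j$ with.
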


\begin{remark}
The condition ``$\Lambda'$ is minimal for $X_{\Omega'}$'' can be weakened to ``all edges of $\Lambda'$ are labeled $`e'$''. We only use the condition that $\Lambda'$ is minimal to guarantee that it represents a nonzero ECH class. That this is true when all edges are labeled $``e"$ follows from Conjecture A.3 in \cite{Beyond}, which is proved as Corollary 4.15 in \cite{Choi}.
\end{remark}

To keep track of the data given by Theorem \ref{thm:main-convex}, it is helpful to consider the matrix of convex generators $$\begin{pmatrix}\Lambda'_1 & \Lambda'_2 & \ldots & \Lambda'_n\\ \Lambda^1_1 & \Lambda^1_2 & \ldots & \Lambda^1_n \\ \vdots & \vdots & \ & \vdots \\ \Lambda^m_1 & \Lambda^m_2 & \ldots & \Lambda^m_n\end{pmatrix}.$$ As we will see in the proof of the theorem, each column corresponds to a connected component of an index zero $J$-holomorphic current $\mathcal{C}$, and each row corresponds to the ends of $\mathcal{C}$ at a given boundary $\partial X_{\Omega'}$ or $\partial X_{\Omega_i}.$ Given a collection of columns \\ $C_{j_1}=\begin{pmatrix}\Lambda'_{j_1}\\ \vdots \\ \Lambda^m_{j_1}\end{pmatrix},\ldots,C_{j_k}=\begin{pmatrix}\Lambda'_{j_k}\\ \vdots \\ \Lambda^m_{j_k}\end{pmatrix},$ we may consider the product of these columns: $$\begin{pmatrix}\tilde{\Lambda}'\\ \tilde{\Lambda}^1 \\ \vdots \\ \tilde{\Lambda}^m\end{pmatrix}:=\begin{pmatrix}\Lambda'_{j_1}\ldots\Lambda'_{j_k}\\ \Lambda^1_{j_1}\ldots\Lambda^1_{j_k}\\ \vdots \\ \Lambda^m_{j_1}\ldots\Lambda^m_{j_k}\end{pmatrix}.$$

We will refer to this operation as ``merging'' columns. 

Item (c) in Theorem \ref{thm:main-convex} gives an index equality for this merged column: 

$$\sum_{i=1}^m I(\tilde{\Lambda}^i)=I(\tilde{\Lambda}').$$

By additivity, it also follows from Theorem \ref{thm:main-convex} that this new column still obeys the action inequality: $$\sum_{i=1}^m\mathcal{A}_{\Omega_i}(\tilde{\Lambda}^i)\leq\mathcal{A}_{\Omega'}(\tilde{\Lambda}').$$

We also note that the inequality in (e) applied to the merged column reads:

$$\sum_{i=1}^m x(\tilde{\Lambda}^i)+y(\tilde{\Lambda}^i)-\frac{1}{2}h(\tilde{\Lambda}^i)\geq x(\tilde{\Lambda}')+y(\tilde{\Lambda}')+m(\tilde{\Lambda}')-k.$$

We shall use Theorem \ref{thm:main-convex} to obstruct certain symplectic embeddings. We will generally accomplish this by taking $\Lambda'$ to be a power of a fixed convex generator $\Lambda'=(\Lambda'_0)^d$ and take a limit as $d\to\infty$. For this to yield useful information, we will need to exclude the case that some column is repeated arbitrarily many times in this limit. 

\begin{lemma}\label{lem:repeated-column}
Suppose that a column $C=\begin{pmatrix}\Lambda'\\ \Lambda^1 \\ \vdots \\ \Lambda^m\end{pmatrix}$ occurs (at least) twice in the matrix of convex generators described in the previous section with $I(\Lambda')>0$. Then one of the $\Lambda^i$ equals $e_{ab}$ for some nonnegative and coprime $a,b$ and the remaining generators $\Lambda^k$ for $k\neq i$ are empty. We also have the relations:
\begin{itemize}
\item $a+b=x(\Lambda')+y(\Lambda')+m(\Lambda')-1$ and
\item $ab=2\text{Area}(\Lambda').$
\end{itemize}
\end{lemma}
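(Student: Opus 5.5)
The plan is to work directly from the combinatorial conditions of Theorem \ref{thm:main-convex}, applied to a column $C$ that appears at least twice, say as columns $j$ and $k$. Condition (f) is the key input. Since $\Lambda'_j=\Lambda'_k$ and $\Lambda^i_j=\Lambda^i_k$ for all $i$, (f) does not apply to the pair $j,k$ directly. Instead, merging the two copies gives a column $C^2$, which must still satisfy the index equality (c), the action inequality, and the merged genus bound. The aim is to combine the index equality for $C$ and for $C^2$ into a strong arithmetic constraint.

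First, I would compute how the index behaves under squaring. Write $\Lambda^2$ for the generator whose edge vectors are doubled. Doubling the path corresponds to scaling the region by $2$; any $h$ factor would be repeated, so $C$ repeated forces every $\Lambda^i$ and $\Lambda'$ to be all-$e$. Here I would argue that $\Lambda'$ is all-$e$ by hypothesis, and that $h$ factors in the $\Lambda^i$ would make the product ill-defined. By Pick's theorem, $L(\Lambda)=\text{Area}(\Lambda)+\frac{1}{2}(x+y+m)+1$, so
\[
I(\Lambda)=2\,\text{Area}(\Lambda)+x(\Lambda)+y(\Lambda)+m(\Lambda),\qquad I(\Lambda^2)=8\,\text{Area}(\Lambda)+2(x+y+m).
\]
Condition (c) for $C$ and for $C^2$ then gives
\[
\sum_i 2A_i+P_i=2A'+P',\qquad \sum_i 8A_i+2P_i=8A'+2P',
\]
where $A$ denotes area and $P=x+y+m$. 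Subtracting twice the first from the second yields $\sum_i A_i=A'$ and hence $\sum_i P_i=P'$.

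Next, I would use the genus bound (e) for the single column $C$:
\[
\sum_i (x_i+y_i)\ge x'+y'+m'-1=P'-1=\sum_i(x_i+y_i+m_i)-1.
\]
This forces $\sum_i m_i\le 1$. Since $I(\Lambda')>0$, the $\Lambda^i$ cannot all be empty: if they were, then $I(\Lambda')=\sum I(\Lambda^i)=0$. So exactly one $\Lambda^i$ has a single edge, and this edge is $e_{ab}$ with $a,b$ coprime. All the others are empty.

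Finally, I would read off the relations. For $\Lambda^i=e_{ab}$ we have $x=a$, $y=b$, $m=1$ and area $ab/2$. The identity $A_i=A'$ gives $ab=2\,\text{Area}(\Lambda')$. The identity $P_i=P'$ gives $a+b+1=x'+y'+m'$, which is the first relation.

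I expect the main obstacle to be the justification that the merged column $C^2$ is a legitimate product to which (c) applies. In particular, I need to handle the case where some $\Lambda^i$ contains an $h$ factor. If $h$ terms are present, the generator $\Lambda^2$ is undefined, and the argument must instead use the convention that such a repetition is excluded by Theorem \ref{thm:main-convex}. Alternatively, I would redo the computation with the $-h$ corrections. This still gives $\sum_i m_i-\frac{1}{2}h_i\le 1$ from (e), and it also shows that $h=0$ is forced. I would verify the Pick's theorem index formula carefully, including the boundary lattice point count $x+y+m$, which uses that edges are primitive.
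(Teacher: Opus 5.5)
Your proposal is correct and follows essentially the same route as the paper. Both arguments apply the index equality (c) to the column and to its merge with its duplicate. Via the Pick's-theorem formula $I(\Lambda^k)=2\,\text{Area}(\Lambda)k^2+(x(\Lambda)+y(\Lambda)+m(\Lambda))k$ for generators with all edges labeled $e$, this forces $\sum_i \text{Area}(\Lambda^i)=\text{Area}(\Lambda')$ and $\sum_i \bigl(x(\Lambda^i)+y(\Lambda^i)+m(\Lambda^i)\bigr)=x(\Lambda')+y(\Lambda')+m(\Lambda')$. Both then combine this with the single-column genus bound (e) to get $\sum_i m(\Lambda^i)\le 1$.

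Two further points in your write-up match the paper. A repeated column rules out $h$ labels, since otherwise some $h_{ab}$ would appear twice in $\Lambda^i$. You also use $I(\Lambda')>0$ to exclude the case where every $\Lambda^i$ is empty; the paper only uses this implicitly.
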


\begin{remark}
A similar restriction for repeated convex generators appears in Proposition 3.4 in \cite{CN} to obstruct embeddings of a (single) polydisk $P(a,1)$ into a ball $B(c).$
\end{remark}

\begin{proof}
For any convex generator $\Lambda$, an application of Pick's Theorem yields

$$L(\Lambda^k)=\text{Area}(\Lambda)k^2+\frac{1}{2}(x(\Lambda)+y(\Lambda)+m(\Lambda))k+1.$$

\begin{itemize}
\item Example 1: $\Lambda=e_{10}e_{01}$ has area $1$ and $x(\Lambda)+y(\Lambda)+m(\Lambda)=4,$ and this formula correctly predicts $L((e_{01}e_{10})^k)=k^2+2k+1=(k+1)^2.$
\item Example 2: $\Lambda=e_{11}$ has area $1/2$ and $x(\Lambda)+y(\Lambda)+m(\Lambda)=3,$ so this formula predicts $L(e_{11}^k)=\frac{1}{2}k^2+\frac{3}{2}k+1,$ which correctly matches a count of $1+2+\ldots+(k+1).$ 
\end{itemize}

\begin{remark}
A polynomial counting the number of lattice points in dilates of a lattice polytope is known as an Ehrhart polynomial. When the polytope is only assumed to have rational vertices, the counting function is called an Ehrhart quasipolynomial instead. See \cite{Ehrhart} for examples of these being applied to obstruct symplectic embeddings of an ellipsoid into another ellipsoid.
\end{remark}

Since the column $C$ is repeated, none of the convex generators considered may contain any edges labeled $``h"$. Merging two columns equal to $C$ gives the following equality for $k=1$ and $k=2$:

\begin{align*}
\sum_{i=1}^m I((\Lambda^i)^k)=&2\left(\sum_{i=1}^m\text{Area}(\Lambda^i)\right)k^2+\left(\sum_{i=1}^m x(\Lambda^i)+y(\Lambda^i)+m(\Lambda^i)\right)k\\
=&2\text{Area}(\Lambda')k^2+(x(\Lambda')+y(\Lambda')+m(\Lambda'))k\\
=& I\left((\Lambda')^k\right).
\end{align*}

Since both middle expressions are quadratic in $k$ with constant terms of zero, being equal for $k=1$ and $k=2$ guarantees that they are equal for all $k.$ In particular, their linear coefficients are equal:

$$\sum_{i=1}^m x(\Lambda^i)+y(\Lambda^i)+m(\Lambda^i)=x(\Lambda')+y(\Lambda')+m(\Lambda').$$

We combine with the genus bound $$\sum_{i=1}^m x(\Lambda^i)+y(\Lambda^i)\geq x(\Lambda')+y(\Lambda')+m(\Lambda')-1$$ to find

$$\sum_{i=1}^m m(\Lambda^i)\leq 1.$$

It follows that one of the $\Lambda^i$ has $m(\Lambda^i)=1$ and the remaining $\Lambda^k$ for $k\neq i$ are empty. So $\Lambda^i=e_{ab}$ for some coprime and nonnegative integers $a$ and $b$, where the relations in the statement of the lemma come from equating the linear and quadratic coefficients above.
\end{proof}

We will also need to know how the index changes when multiplying two convex generators.

\begin{lemma}\label{lem:index-shift}
Let $\Lambda$ be a convex generator. Let $c_{ab}(\Lambda)$ be the value for which the line \newline$bx+ay=c_{ab}(\Lambda)$ is externally tangent to $\Lambda$. Then $$I(e_{ab}\Lambda)=I(e_{ab})+I(\Lambda)+2c_{ab}(\Lambda).$$
\end{lemma}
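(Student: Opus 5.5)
We need $I(e_{ab}\Lambda)-I(\Lambda)-I(e_{ab})=2c_{ab}(\Lambda)$. Since $e_{ab}$ carries label ``e'', the $h$-counts of $e_{ab}\Lambda$ and $\Lambda$ agree, and $h(e_{ab})=0$. By Definition~\ref{def:convex-index} the claim therefore reduces to a lattice point count:
$$L(e_{ab}\Lambda)-L(\Lambda)=L(e_{ab})-1+c_{ab}(\Lambda).$$
Here $L(e_{ab})$ is the number of lattice points in the triangle with vertices $(0,0),(a,0),(0,b)$. We have $L(e_{ab})=\frac{ab+a+b+3}{2}$ by Pick's theorem, using that $\gcd(a,b)=1$ (and this also covers $e_{10},e_{01}$).

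The plan is to compare the two regions geometrically. The path $e_{ab}\Lambda$ is obtained from $\Lambda$ by inserting the edge vector $(a,-b)$ at the place dictated by slope order. This happens at the vertex $v=(p,q)$ of $\Lambda$ where the line $bx+ay=c$ is externally tangent, so $c_{ab}(\Lambda)=bp+aq$. The portion of $\Lambda$ after $v$ is translated by $(a,0)$, and the portion before $v$ is translated by $(0,b)$. I would instead use Pick's theorem directly, which avoids fiddly cut-and-paste.

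First step: compute the area change. We have $\text{Area}(e_{ab}\Lambda)=\text{Area}(\Lambda)+\frac{ab}{2}+bp+aq$. Concretely, the new region decomposes as follows:
\begin{itemize}
\item the old part before $v$, shifted up by $b$, which adds a strip of area $bp$;
\item the old part after $v$, shifted right by $a$, which adds a strip of area $aq$;
\item the inserted triangle spanned by $(a,-b)$ from the corner, of area $ab/2$.
\end{itemize}
Equivalently, one can verify this with the shoelace formula as the mixed-area term $\det$ pairing $(a,-b)$ with $v$.

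Second step: compute the boundary lattice count. This is $B(\Lambda)=x(\Lambda)+y(\Lambda)+\ell(\Lambda)$, where $\ell$ is the lattice length of the path, i.e.\ the number of edges $m(\Lambda)$ (edges are primitive by definition). Then $B(e_{ab}\Lambda)=B(\Lambda)+a+b+1$.

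Pick's theorem then gives
$$L=\text{Area}+\tfrac{B}{2}+1,$$
as in the Ehrhart formula used in the proof of Lemma~\ref{lem:repeated-column}. Substituting, $L(e_{ab}\Lambda)-L(\Lambda)=\frac{ab}{2}+c_{ab}(\Lambda)+\frac{a+b+1}{2}$. This equals $L(e_{ab})-1+c_{ab}(\Lambda)$, as required.

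The main obstacle is the area computation in the first step. One must check carefully that the tangency vertex is exactly where the new edge is inserted, with ties handled when $\Lambda$ already has an edge of slope $-b/a$ (then any point on that edge works, and $c_{ab}$ is unchanged). One must also treat degenerate cases: $\Lambda$ empty, or $v$ on an axis. I would verify the area formula by writing the region as a union of trapezoids under each edge, or via the shoelace formula. In that form the cross term is $\sum_{\text{edges before }v}(\dots)$, which telescopes to $bp+aq$.
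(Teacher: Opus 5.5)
Your proof is correct and is essentially the paper's argument. Pick's theorem makes every term of the index except twice the area additive under multiplication, and the excess area $\text{Area}(e_{ab}\Lambda)-\text{Area}(\Lambda)-\text{Area}(e_{ab})$ equals $bp+aq=c_{ab}(\Lambda)$ when the new region is cut at the tangency vertex $v=(p,q)$. You carry out explicitly the subdivision the paper leaves to the reader (strips of area $bp$ and $aq$ plus a triangle of area $ab/2$), and you do the bookkeeping with $L$ rather than $I$.
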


\begin{proof}
As in the proof of the previous lemma, an application of Pick's Theorem yields $$I(\Lambda)=2\text{Area}(\Lambda)+x(\Lambda)+y(\Lambda)+m(\Lambda)-h(\Lambda).$$ All terms except the area term are additive, so $$I(e_{ab}\Lambda)=I(e_{ab})+I(\Lambda)+2\left(\text{Area}(e_{ab}\Lambda)-\text{Area}(\Lambda)-\text{Area}(e_{ab})\right).$$

That $c_{ab}(\Lambda)$ is the difference between $\text{Area}(e_{ab}\Lambda)$ and $\text{Area}(e_{ab})+\text{Area}(\Lambda)$ may be seen by subdividing the plane region bounded by the axes and $e_{ab}\Lambda$ into pieces.
\end{proof}

We are now ready to apply Theorem \ref{thm:main-convex} to symplectic embedding problems.

\begin{proof}[Proof of Theorem \ref{thm:two-polydisks-ball}]
We apply Theorem \ref{thm:main-convex} to $\Lambda'=e_{11}^{d'}$, which may be seen to be minimal for $B(c),$ and choose the index split as $I(\Lambda^1)=I(\Lambda^2)=\frac{d'(d'+3)}{2}$, with $d'$ being chosen such that $d'(d'+3)$ is a multiple of $4$.

Let $e_{11}^{d'}=e_{11}^{d_1}\ldots e_{11}^{d_n}$ be a factorization given by Theorem \ref{thm:main-convex}. First, we check that as $d'\to\infty$, there must be some column with $d_i$ arbitrarily large.

Suppose for contradiction that $d_i\leq K$ always. There are only finitely many possible columns, so some must be repeated arbitrarily many times as $d'\to\infty.$ By Lemma \ref{lem:repeated-column}, there must be a repeated column either of the form $\begin{pmatrix}e_{11}^d\\ \emptyset \\ e_{ab}\end{pmatrix}$ or of the form $\begin{pmatrix}e_{11}^d \\ e_{ab} \\ \emptyset\end{pmatrix}.$ Also, by the chosen index split $I(\Lambda^1)=I(\Lambda^2)=\frac{d'(d'+3)}{2},$ there must be repeated columns of both forms. However, when we merge two columns, one of each form, the index of the negative generators is additive on merging, but $I(e_{11}^{d_1+d_2})>I(e_{11}^{d_1})+I(e_{11}^{d_2})$. 

We now divide our problem into two cases. In either case, we may assume that $x(\Lambda^i)$ and $y(\Lambda^i)$ are bounded above by a fixed constant multiple of $d$, as otherwise the action inequality $$dc\geq\sum_{i=1}^2 x(\Lambda^i)+ay(\Lambda^i)$$ gives a stronger bound for $c$ than the statement of our theorem.

Case 1: There exists a repeated column $C_1$ of the form $\begin{pmatrix}e_{11}^r\\ \emptyset \\ e_{mn}\end{pmatrix}$ or of the form $\begin{pmatrix}e_{11}^r \\ e_{mn} \\ \emptyset\end{pmatrix}.$ Without loss of generality, take the second case. Note that not both of these cases can occur by index considerations as above.

Let $C_2=\begin{pmatrix}e_{11}^d\\ \Lambda^1 \\ \Lambda^2 \end{pmatrix}$ be a column where $d$ may be taken to be arbitrarily large. Since every repeated column is empty in the component of $\Lambda^2$, there are such columns where $I(\Lambda^2)\geq I(\Lambda^1).$ In particular, $I(\Lambda^2)\geq\frac{d^2+3d}{2}.$

We immediately have the following constraints from Theorem \ref{thm:main-convex}.

The index constraint (a) yields
$$d(d+3)=I(\Lambda^1)+I(\Lambda^2)\leq 2(x(\Lambda^1)+1)(y(\Lambda^1)+1)+2(x(\Lambda^2)+1)(y(\Lambda^2)+1).$$

By the assumption that $x(\Lambda^i)$ and $y(\Lambda^i)$ are $O(d)$, the righthand side is $$2\sum_{i=1}^2 x(\Lambda^i)y(\Lambda^i)+O(d),$$

Also, the genus bound gives

$$\sum_{i=1}^2x(\Lambda^i)+y(\Lambda^i)\geq 3d-1.$$

By merging $C_1$ and $C_2$ and applying the index constraint to the merged column, we find that 
\begin{align*}
2c_{mn}(\Lambda^1)&=I(e_{mn}\Lambda^1)-I(e_{mn})-I(\Lambda^1)\\ &=I(e_{11}^{d+r})-I(e_{11}^r)-I(e_{11}^d)\\ &=2dr
\end{align*} 
by Lemma \ref{lem:index-shift}, and so the line $nx+my=dr$ is externally tangent to $\Lambda^1.$ In particular, the point $(x(\Lambda^1),y(\Lambda^1))$ lies on or above this line by convexity, giving an additional constraint $\frac{n}{r}x(\Lambda_1)+\frac{m}{r}y(\Lambda_1)\geq d.$

Hence, we may bound 
\begin{align*}
d&\leq \frac{n}{r}x(\Lambda^1)+\frac{m}{r}y(\Lambda^1)\\&\leq\max\left(\frac{n}{r},\frac{m}{r}\right)(x(\Lambda^1)+y(\Lambda^1))\\&< \left(\frac{3+\sqrt{5}}{2}\right)(x(\Lambda^1)+y(\Lambda^1))
\end{align*}

using the relations $mn=r^2$ and $m+n=3r-1$ from Lemma \ref{lem:repeated-column}, and solving for $m/r$ and $n/r$ explicitly.

Finally, we use the action inequality $$dc\geq (x(\Lambda^1)+x(\Lambda^2))+a(y(\Lambda^1)+y(\Lambda^2)).$$

Putting $X_i=x(\Lambda^i)/d$ and similarly for $Y_i$, we keep only the leading terms in $d$ and take $d\to\infty$ to find

\begin{align*}c&\geq\text{min}\left\{X_1+aY_1+X_2+aY_2\bigg\vert \sum_{i=1}^2 X_i+Y_i\geq 3, \sum_{i=1}^2 X_iY_i\geq \frac{1}{2}, X_1+Y_1\geq\frac{2}{3+\sqrt{5}},X_2Y_2\geq\frac{1}{4}\right\}\\&=\begin{cases} 3+\left(\frac{3+\sqrt{5}}{4}-\sqrt{\frac{3(1+\sqrt{5})}{8}}\right)(a-1) & 1\leq a\leq a^* \\\sqrt{2a}+\frac{3-\sqrt{5}}{2} & a\geq a^*,\end{cases}\end{align*}
where $a^*=\frac{(5+3\sqrt{5})+\sqrt{66+30\sqrt{5}}}{2}\approx 11.62.$
\begin{remark}
Note that for large $a$, this bound is weaker than the volume constraint $c\geq 2\sqrt{a}.$ This is to be expected because for large values of $a,$ the volume bound may be recovered by merging all columns, using an even index split, and ignoring the genus bound. For large $a$, the index bound is stronger than the genus bound, and in order to use the genus bound as above, we could not guarantee an equal index split in the column considered. 

For $1\leq a\leq 2.869,$ this bound is sharper than the volume bound.
\end{remark}

Case 2: There do not exist any repeated columns. In this case, for any fixed $\varepsilon>0$, there are only finitely many columns of the form $\begin{pmatrix}e_{11}^d\\ \Lambda^1 \\ \Lambda^2\end{pmatrix}$ with $d\leq1/\varepsilon.$ Let $N=N(\varepsilon)$ be an upper bound for the number of such columns.

We merge all columns with $d>1/\varepsilon$ in the above notation, resulting in a column of the form $C_1=\begin{pmatrix}e_{11}^r\\ \Lambda^1 \\ \Lambda^2\end{pmatrix}$ with $d'-r<N/\varepsilon$ since at most $N$ columns are excluded from the merge, each with exponent of $e_{11}$ smaller than $1/\varepsilon$ in the first entry. Also, let $C_2=\begin{pmatrix}e_{11}^s\\ \tilde{\Lambda}^1 \\ \tilde{\Lambda}^2\end{pmatrix}$ be the merge of the remaining columns, so $s<N/\varepsilon.$ Let $p,q>0$  so that $e_{pq}$ contains the $\tilde{\Lambda_i}$ as subsets. Here $p,q$ are chosen to be independent of $d'$, but may depend on $\varepsilon.$ Note that $c_{pq}(\Lambda^i)$ is bounded by a constant (depending on $\varepsilon$) multiple of $d'$ by the assumption on $x(\Lambda^i)$ and $y(\Lambda^i).$

We may estimate 

\begin{align*}
\frac{d'^2+3d'}{2}=I(\Lambda^i\tilde{\Lambda^i})\leq I(e_{pq}\Lambda^i)=I(e_{pq})+I(\Lambda^i)+2c_{pq}(\Lambda^i)
\end{align*}

to conclude that the difference $\frac{d'^2}{2}-I(\Lambda^i)$ is bounded above by a constant multiple of $d'.$

So, in this case, after taking the limit $d'\to\infty$, we no longer have the constraint \newline $X_1+Y_1\geq\frac{2}{3+\sqrt{5}}$, but we obtain a new constraint that $X_iY_i\geq\frac{1}{4}$ for $i=1,2.$

Also, the action inequality for $C_1$ reads

\begin{align*}
rc&\geq x(\Lambda^1)+ay(\Lambda^1)+x(\Lambda^2)+ay(\Lambda^2)
\\ c&\geq\left(\frac{1}{d'+N/\varepsilon}\right)\left(x(\Lambda^1)+ay(\Lambda^1)+x(\Lambda^2)+ay(\Lambda^2)\right).
\end{align*}

Dividing through all constraints by leading powers of $d'$ and taking $d'$ to be sufficiently large compared to $\varepsilon$, we obtain

\begin{align*}c\geq &\text{min}\left\{X_1+aY_1+X_2+aY_2\bigg\vert \sum_{i=1}^2 X_i+Y_i\geq 3, X_iY_i\geq \frac{1}{4}\right\}\\
&=\begin{cases} 3+\left(\frac{3-\sqrt{3}}{2}\right)(a-1) & 1\leq a\leq \frac{7+3\sqrt{5}}{2}\\ 2\sqrt{a}: & a\geq \frac{7+3\sqrt{5}}{2}.\end{cases}
\end{align*}

This bound for $c$ is always larger than the bound we found in case $1,$ so the result follows.

\end{proof}

This calculation can be repeated for embedding three copies of $P(a,1)$ into $B(c)$, in which case we find that

\begin{proposition}
Let $a\in[1,1.96]$. If $\coprod_{i=1}^3 P(a,1)$ symplectically embeds into $B(c)$, then 
$$c\geq 3+\left(\frac{(1+\sqrt{2})(r-\sqrt{r^2-4})}{2\sqrt{6}}\right)(a-1),$$

where $r=\frac{3\sqrt{6}+\sqrt{30}}{2+2\sqrt{2}}.$

\end{proposition}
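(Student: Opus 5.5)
We follow the proof of Theorem \ref{thm:two-polydisks-ball} with three negative ends. Apply Theorem \ref{thm:main-convex} to $\Lambda'=e_{11}^{d'}$, which is minimal for $B(c)$. Use the even index split $I(\Lambda^1)=I(\Lambda^2)=I(\Lambda^3)=\frac{d'(d'+3)}{6}$, with $d'$ restricted so that this is an even integer. As before, the action inequality $d'c\geq\sum_i x(\Lambda^i)+ay(\Lambda^i)$ lets us assume every $x(\Lambda^i)$ and $y(\Lambda^i)$ is $O(d')$; otherwise the bound already holds.

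The next step is to find a column with large exponent $d$. By Lemma \ref{lem:repeated-column}, any repeated column has the form $e_{11}^r$ over a single $e_{mn}$ in one row, with the other rows empty. Here $mn=r^2$ and $m+n=3r-1$. Two repeated columns supported in different rows cannot both occur: merging them violates (c), since $I(e_{11}^{r_1+r_2})>I(e_{11}^{r_1})+I(e_{11}^{r_2})$ while the negative indices just add. So all repeated columns live in a single row, say row $1$, and the even split then forces a column with $d\to\infty$ to exist.

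Case 1: repeated columns exist, all in row $1$. Choose a column $C_2$ with large $d$. Since rows $2$ and $3$ receive index only from non-repeated columns, we can choose $C_2$ so that $I(\Lambda^2)+I(\Lambda^3)\geq\frac{2}{3}\cdot\frac{d^2+3d}{2}$, and after relabeling also $I(\Lambda^3)\geq I(\Lambda^2)$. Merging $C_2$ with a repeated column $C_1$ and using Lemma \ref{lem:index-shift} gives $c_{mn}(\Lambda^1)=dr$. Hence $X_1+Y_1\geq\frac{2}{3+\sqrt5}$, exactly as before. Rescaling by $d$ and letting $d\to\infty$ then reduces the problem to
$$c\geq\min\Big\{\textstyle\sum_{i=1}^3 X_i+aY_i\ \Big|\ \sum_i X_i+Y_i\geq 3,\ \sum_i X_iY_i\geq\frac12,\ X_1+Y_1\geq\frac{2}{3+\sqrt5},\ X_2Y_2+X_3Y_3\geq\frac13,\ X_3Y_3\geq\frac16\Big\}.$$
The first two constraints are the genus bound and the index bound, read at leading order. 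The remaining constraints may need adjusting once we see which index lower bounds are actually forced for rows $2,3$ in a single column.

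Case 2: there are no repeated columns. Following the $\varepsilon$-merging argument, each $I(\Lambda^i)$ is within $O(d')$ of $\frac{d'^2}{6}$. This gives $X_iY_i\geq\frac16$ for each $i$, together with $\sum_i X_i+Y_i\geq3$. We then minimize $\sum_i X_i+aY_i$ subject to these constraints and check that the result dominates the Case 1 value for $a\in[1,1.96]$.

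The main obstacle is the optimization in Case 1. I would solve it with Lagrange multipliers, taking the genus constraint and the hyperbola constraints as active. At $a=1$ the value is $3$. Near $a=1$ the answer is linear in $a-1$, with slope given by the $Y$-coordinates of the optimizer. Those coordinates come from solving a quadratic whose discriminant produces the expression $r-\sqrt{r^2-4}$ with $r=\frac{3\sqrt6+\sqrt{30}}{2+2\sqrt2}$. I expect the $1+\sqrt2$ and $\sqrt6$ factors to come from $X_iY_i=\frac16$ combined with the extremal ratio $\frac{3+\sqrt5}{2}$ of $m/r$.

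Identifying which constraints are active, and pinning down the exact index constraints on rows $2,3$ in Case 1, is where I expect the work to lie. The upper endpoint $1.96$ should then be checked numerically as the point where this bound stops beating the volume bound $c\geq\sqrt{6a}$.
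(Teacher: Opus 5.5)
Your outline is the one the paper has in mind: the paper only says that the computation of Theorem~\ref{thm:two-polydisks-ball} ``can be repeated''. Your Case~2 is fine. But Case~1 as you set it up cannot produce the stated constant. Your constraints on rows $2$ and $3$, $X_2Y_2+X_3Y_3\geq\frac13$ and $X_3Y_3\geq\frac16$, allow row $2$ to carry no index at all, and the problem then collapses to the two-copy one. Take $(X_1,Y_1)=(\frac{3-\sqrt5}{2},0)$, $X_2=Y_2=0$, $X_3+Y_3=\frac{3+\sqrt5}{2}$, $X_3Y_3=\frac12$. This point satisfies all five of your constraints, and its objective value is $3+\bigl(\frac{3+\sqrt5}{4}-\sqrt{\frac{3(1+\sqrt5)}{8}}\bigr)(a-1)\approx 3+0.207(a-1)$. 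That is exactly the bound of Theorem~\ref{thm:two-polydisks-ball}, strictly below the claimed $3+0.448(a-1)$ for $a>1$. So no choice of active constraints or Lagrange computation in your problem can give the result. Separately, the even split should be $I(\Lambda^i)=\frac{d'(d'+3)}{3}$, since $I(e_{11}^{d'})=d'(d'+3)$. Your index expressions are consistently off by a factor of $2$, although your $X_iY_i$ constraints are the correct leading-order ones.

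\textbf{What the statement requires.} The constant encodes a separate bound for each row containing no repeated columns: $X_2Y_2\geq\frac16$ and $X_3Y_3\geq\frac16$. This is the analogue of $X_2Y_2\geq\frac14$ in Theorem~\ref{thm:two-polydisks-ball}. Add $\sum_i X_iY_i\geq\frac12$, the genus bound, and $X_1+Y_1\geq\frac{2}{3+\sqrt5}$. Then the minimum of $\sum_i Y_i$ on the face $\sum_i(X_i+Y_i)=3$ is attained at $Y_1=0$, $X_1=\frac{3-\sqrt5}{2}$, $X_2Y_2=\frac16$, $X_3Y_3=\frac13$, $(X_3,Y_3)=\sqrt2(X_2,Y_2)$. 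This gives $Y_2+Y_3=\frac12\bigl(S-\sqrt{S^2-4P}\bigr)$ with $S=\frac{3+\sqrt5}{2}$ and $\sqrt P=\frac{1}{\sqrt6}+\frac{1}{\sqrt3}=\frac{1+\sqrt2}{\sqrt6}$. Since $r=\frac{\sqrt6}{1+\sqrt2}S$, this is exactly the stated slope. So the $1+\sqrt2$ comes from $\sqrt{1/6}+\sqrt{1/3}$, i.e.\ from the extremal split $0,\frac16,\frac13$ of $\sum_i X_iY_i\geq\frac12$, not from the ratio $\frac{3+\sqrt5}{2}$.

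\textbf{How to justify the per-row bounds.} Do not work with a single column $C_2$: nothing forces one column to carry a third of the index in both rows $2$ and $3$. Instead, merge all non-repeated columns, as in Sub-case~1 of the proof of Theorem~\ref{thm:two-polydisks-polydisk}.
\begin{itemize}
\item Since all repeated columns are empty in rows $2$ and $3$, the merged column has rows $2,3$ equal to the full $\Lambda^2,\Lambda^3$, each of index $\frac{d'(d'+3)}{3}$.
\item Its top entry is $e_{11}^d$ with $\sqrt{2/3}\,d'-O(1)\leq d\leq d'$, which yields $X_2Y_2,X_3Y_3\geq\frac16$.
\item Merging it with a repeated column still gives $X_1+Y_1\geq\frac{2}{3+\sqrt5}$.
\item The $-k$ term in the merged genus bound is at most $N(\varepsilon)+\varepsilon d'$, hence negligible in the limit.
\end{itemize}
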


As in the previous bounds derived, this is still sharp at $a=1$.

We more generally have a sharp obstruction for embedding a union of square polydisks into certain ellipsoids.

\begin{proposition}\label{prop:square-polydisk-ellipsoid}
Let $b$ be a positive integer. There exists a symplectic embedding \newline $P(1,1)\coprod P(1,1)\hookrightarrow E(bc,c)$ if $bc>b+2$ and there does not when $bc<b+2.$
\end{proposition}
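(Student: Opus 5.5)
The existence half is a moment map construction, and the nonexistence half applies Theorem \ref{thm:main-convex} with $\Lambda'=e_{b1}^{d'}$ and lets $d'\to\infty$.

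\textbf{Existence.} We place two unit squares under the triangle $\Omega'$ with vertices $(0,0)$, $(bc,0)$, $(0,c)$, as in Figure \ref{fig:polydisk-remark}. These are $P_1=[0,1]\times[0,1]$ and $P_2=[1,2]\times[0,1]$. Both lie in $\Omega'$ exactly when the corner $(2,1)$ does, that is, when $\frac{2}{bc}+\frac{1}{c}\leq 1$, or equivalently $bc\geq b+2$. Now assume $bc>b+2$. The squares can be shrunk slightly into the interior of $\Omega'$ while $P(1,1)$ still embeds into each preimage. Indeed, $\mu^{-1}(P_1)$ is the polydisk $P(1,1)$ itself. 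The set $\mu^{-1}(P_2)$ is an annulus of area $1$ times a disk of area $1$. Since $bc>b+2$, we may take both factors of slightly larger area, and a disk embeds area-preservingly into an annulus of larger area. Taking a product of such planar embeddings gives $P(1,1)\coprod P(1,1)\hookrightarrow E(bc,c)$.

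\textbf{Obstruction: choice of $\Lambda'$.} Suppose $P(1,1)\coprod P(1,1)\hookrightarrow E(bc,c)$. We take $\Lambda'=e_{b1}^{d'}$, the analogue of $e_{11}^{d'}$ in the proof of Theorem \ref{thm:two-polydisks-ball}; its edges are parallel to the slanted edge of $\Omega'$. By the remark after Theorem \ref{thm:main-convex}, it suffices that all edges of $\Lambda'$ are labeled $e$, though one can also check minimality directly. We choose an equal index split, with $d'$ such that $I(e_{b1}^{d'})/2$ is even. For a single column $\begin{pmatrix}e_{b1}^d\\ \Lambda^1\\ \Lambda^2\end{pmatrix}$ we read off the following quantities.
\begin{itemize}
\item The action on $\Omega'$ is $\mathcal{A}_{\Omega'}(e_{b1}^d)=d\,bc$, since the line $x+by=bc$ contains the edge of $\Omega'$.
\item The action of any $\Lambda$ on the unit square is $x(\Lambda)+y(\Lambda)$.
\item The genus bound (e) gives
\[
\sum_i x(\Lambda^i)+y(\Lambda^i)\geq \sum_i \Bigl(x(\Lambda^i)+y(\Lambda^i)-\tfrac12 h(\Lambda^i)\Bigr)\geq (b+2)d-1 .
\]
\end{itemize}
Combining these with the action inequality (d) yields $d\,bc\geq (b+2)d-1$, that is, $bc\geq b+2-\frac1d$, for every column. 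It therefore suffices to show that as $d'\to\infty$ some column has $d$ arbitrarily large. This immediately gives $bc\geq b+2$.

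\textbf{Main step: an unbounded column.} This step is where the work lies, and it follows the argument in the proof of Theorem \ref{thm:two-polydisks-ball}. Suppose the exponents $d$ are bounded. Then only finitely many columns are possible, so some column is repeated arbitrarily often. By Lemma \ref{lem:repeated-column}, such a column has the form $\begin{pmatrix}e_{b1}^r\\ e_{pq}\\ \emptyset\end{pmatrix}$ or $\begin{pmatrix}e_{b1}^r\\ \emptyset\\ e_{pq}\end{pmatrix}$.

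Because the index is split equally and the columns with bounded $d$ would have to carry all of it, repeated columns of both forms must occur. Merge one repeated column of each form. On the bottom rows the index is additive, since each row receives a single factor and the other is empty. On the top row, however, Lemma \ref{lem:index-shift} gives
\[
I(e_{b1}^{r_1+r_2})=I(e_{b1}^{r_1})+I(e_{b1}^{r_2})+2c_{b1}(e_{b1}^{r_1})>I(e_{b1}^{r_1})+I(e_{b1}^{r_2}).
\]
This contradicts item (c) of Theorem \ref{thm:main-convex}.

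The care needed is in checking that this bookkeeping goes through as in the earlier proof. Only finitely many column types have bounded $d$, and we must confirm they cannot absorb the index $\sim bd'^2/2$ on both sides without repetition in both rows. Once an unbounded column is found, the estimate in the previous paragraph completes the proof.
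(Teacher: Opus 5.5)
Your proposal is correct and follows the paper's proof. Existence is by inclusion, with the corner $(2,1)$ lying under $x+by=bc$. For the obstruction you take $\Lambda'=e_{b1}^{d'}$, combine the genus bound $\sum_i x(\Lambda^i)+y(\Lambda^i)\geq (b+2)d-1$ with the action inequality $bcd\geq\sum_i x(\Lambda^i)+y(\Lambda^i)$ on a column with $d$ arbitrarily large, and obtain that column from the same repeated-column argument as in the proof of Theorem \ref{thm:two-polydisks-ball}.

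Two small points. First, the cross term in your merged-index identity should be $2br_1r_2$ rather than $2c_{b1}(e_{b1}^{r_1})=2br_1$; only its positivity is used, so nothing breaks. Second, the step you flag closes in one line: if only one form of repeated column occurred, the other negative row would be a product of the boundedly many, bounded entries of once-occurring columns, so its index would be bounded, contradicting $I_i=\frac{1}{2}I(e_{b1}^{d'})\to\infty$.
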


\begin{proof}
If $bc>b+2$, then the embedding can be given by inclusion in a straightforward way, as this is the condition for the point $(2,1)$ to lie below the line $x+by=bc$ in $\R^2.$
To prove that $bc\geq b+2$ is a necessary condition, we apply Theorem \ref{thm:main-convex} to the convex generator $\Lambda'=e_{b1}^{d'},$ with $d'$ a positive integer. As in the case of embedding two polydisks into a ball, as we take $d'\to\infty$, there must exist a column $\begin{pmatrix}e_{b1}^d\\\Lambda^1\\ \Lambda^2\end{pmatrix}$ where $d$ may be taken to be arbitrarily large.
The genus bound applied to this column becomes $$\sum_{i=1}^2 x(\Lambda^i)+y(\Lambda^i)\geq d(b+2)-1.$$

Also, the action inequality gives $$bcd=\mathcal{A}_{E(bc,c)}(e_{b1}^d)\geq \mathcal{A}_{P(1,1)}(\Lambda^1)+\mathcal{A}_{P(1,1)}(\Lambda^2)=\sum_{i=1}^2 x(\Lambda^i)+y(\Lambda^i).$$

Combining these inequalities and taking $d\to\infty$ gives the result.
\end{proof}

Finally, we consider the problem of symplectically embedding two polydisks into a square polydisk.

\begin{proof}[Proof of Theorem \ref{thm:two-polydisks-polydisk}]

We take $\Lambda'=e_{10}^{d} e_{01}^{d}$ with $d$ to be a large even integer, and choose the index split as $I_1=I_2=\frac{1}{2}I(\Lambda')=d^2+2d.$ As in the proof of Theorem \ref{thm:two-polydisks-ball}, we separate into cases.

Case 1: No column is repeated as $d\to\infty$.

Fix $\varepsilon>0$ and merge all columns of the form $$\begin{pmatrix}e_{10}^ae_{01}^b\\ \tilde{\Lambda^1}\\ \tilde{\Lambda^2}\end{pmatrix}$$ for which $\frac{1}{a+b}<\varepsilon.$ Let $\{C_k\}_{k\in S}$ be this set of columns. By assumption, there are only finitely many possible columns which violate this inequality, say $N=N(\varepsilon).$

Call the resulting merged column $$\begin{pmatrix}e_{10}^Ae_{01}^B\\ \Lambda^1 \\ \Lambda^2\end{pmatrix}.$$

There are at most $N$ columns that are excluded from this merge, all of the form $\begin{pmatrix}e_{10}^xe_{01}^y\\ \star \\ \star\end{pmatrix}$ with $x+y\leq 1/\varepsilon.$ Since the sum of the exponents of $e_{10}$ and $e_{01}$ in the first entry of all columns are both $d$, it follows that $d-A$ and $d-B$ are both bounded above by $N/\varepsilon.$ Also, for $k\in S$, if $C_k=\begin{pmatrix}e_{10}^{a_k}e_{01}^{b_k}\\ \Lambda^1_k \\ \Lambda^2_k\end{pmatrix},$ then

$$\frac{|S|}{\varepsilon}\leq\sum_{k\in S}(a_k+b_k)\leq\sum_{i=1}^n (a_i+b_i)=2d,$$

so we may bound $|S|\leq 2d\varepsilon.$

The action inequality then reads

\begin{align*}
(x(\Lambda^1)+x(\Lambda^2))+a(y(\Lambda^1)+y(\Lambda^2))&\leq c(A+B)\\ &\leq 2dc,
\end{align*} which we rearrange to obtain the constraint
$$c\geq\frac{1}{2d} \left(x(\Lambda^1)+x(\Lambda^2)+a(y(\Lambda^1)+y(\Lambda^2))\right).$$

The genus bound applied to the merged column reads:
\begin{align*}
x(\Lambda^1)+y(\Lambda^1)+x(\Lambda^2)+y(\Lambda^2)&\geq 2(A+B)-|S|\\ &\geq 4d-4N/\varepsilon-2d\varepsilon
\end{align*}

The index equality reads:

\begin{align*}
2(AB+A+B)&=I(\Lambda^1)+I(\Lambda^2)\leq 2(1+o(1))\sum_{i=1}^2 x(\Lambda^i)y(\Lambda^i)\\
\sum_{i=1}^2 x(\Lambda^i)y(\Lambda^i)&\geq\frac{1}{1+o(1)}(AB+A+B)\\ &\geq \frac{d^2+O(d)}{1+o(1)}
\end{align*}

Since $d$ may be taken to be arbitrarily large and $\varepsilon$ may be taken to be arbitrarily small, independently of $d$, we may take limits to find that

$$c\geq \min \left\{\frac{1}{2}\left(x_1+x_2+a(y_1+y_2)\right)\bigg\vert\ x_1y_1+x_2y_2\geq 1 \text{ and } x_1+y_1+x_2+y_2\geq 4\right\}.$$

Solving this optimization problem, we obtain:

\begin{align*}c\geq \begin{cases} 2+\left(1-\frac{\sqrt{3}}{2}\right)(a-1) & 1\leq a\leq 7+4\sqrt{3}\\\sqrt{a} & a\geq 7+4\sqrt{3}.\end{cases}
\end{align*}

\begin{remark} We could get a stronger bound in this case by estimating both indices $I(\Lambda^i)$ individually instead of their sum $I(\Lambda^1)+I(\Lambda^2)$ in the above optimization, in which case the bound for the large $a$ would agree with the volume constraint. However, this bound is already larger than what we obtain in Case 2, so it would not affect the statement of the theorem.
\end{remark}

Case 2: Some column is repeated when we take $d\to\infty.$ By Lemma \ref{lem:repeated-column}, any such column must be of the form $\begin{pmatrix}e_{10}^Re_{01}^S\\ \emptyset \\ e_{rs}\end{pmatrix}$ or $\begin{pmatrix}e_{10}^Re_{01}^S\\ e_{rs} \\ \emptyset\end{pmatrix}.$ We have sub-cases that either both forms of columns occur, or only one does.

Sub-case 1: Suppose that only one of these forms occur, without loss of generality, taking the second one. We may assume that $R$ and $S$ stay bounded as $d'\to\infty$, as otherwise the argument and bound for $c$ in Case 1 remain valid. For any such column, we have the relations $rs=2RS$ and $r+s=2(R+S)-1$ from Lemma \ref{lem:repeated-column}.

Now, let $\begin{pmatrix}e_{10}^Ae_{01}^B\\ \tilde{\Lambda^1}\\ \emptyset\end{pmatrix}$ be the merge of all columns with an empty generator, and let $C=\begin{pmatrix}e_{10}^{d-A}e_{01}^{d-B}\\ \Lambda^1\\ \Lambda^2\end{pmatrix}$ be the merge of the other columns. 

The action inequality applied to $C$ reads $$c(2d-A-B)\geq \sum_{i=1}^2 x(\Lambda^i)+ay(\Lambda^i).$$

By the assumed index split we have $I(\Lambda^2)=d^2$ to first order, and hence by similar estimates to the previous cases:

$$x(\Lambda^2)y(\Lambda^2)\geq \frac{1}{2}d^2+O(d).$$

The genus bound applied to $C$ gives

$$\sum_{i=1}^2 x(\Lambda^i)+y(\Lambda^i)\geq 2(2d-A-B)-k,$$

where $k$ is the number of columns merged to produce $C.$

Also, applying the index equality to the column $C$ produces.

$$x(\Lambda^1)y(\Lambda^1)+x(\Lambda^2)y(\Lambda^2)\geq (d-A)(d-B)+O(d).$$

We also have a constraint that if a column $\begin{pmatrix}e_{10}^Re_{01}^S\\ e_{rs} \\ \emptyset\end{pmatrix}$ occurs, then considering the index of the column obtained by merging it with $C$ gives

$$2c_{rs}(\Lambda_1)=I(e_{10}^{d-A+R}e_{01}^{d-B+S})-I(e_{10}^Re_{01}^S)-I(e_{10}^{d-A}e_{01}^{d-B})$$

from Lemma \ref{lem:index-shift}.

By the assumption that $R$ and $S$ stay bounded, we find $$c_{rs}(\Lambda_1)=R(d-B)+S(d-A)+O(1).$$ As in the analogous case in Theorem 1.2, this means that the point $(x(\Lambda^1),y(\Lambda^1))$ lies on or above the line $$sx+ry=R(d-B)+S(d-A)+O(1).$$ This gives a new constraint

\begin{align*}
\text{max}(r,s)(x(\Lambda^1)+y(\Lambda^1))&\geq sx(\Lambda^1)+ry(\Lambda^1)\\ &\geq S(d-A)+R(d-B)+O(1)\\ &\geq\text{max}(S,R)\cdot\text{min}(d-A,d-B)+O(1).
\end{align*}

The relation $r+s=2(R+S)-1$ implies that $\frac{\text{max}(S,R)}{\text{max}(s,r)}\geq\frac{1}{4},$ so

$$x(\Lambda^1)+y(\Lambda^1)\geq \frac{1}{4}\text{min}(d-A,d-B)+O(1).$$





Altogether, we may keep only leading order terms in $d$ and make $k/d$ arbitrarily small, by identical arguments to first case, to obtain

\begin{align*}
c &\geq \min\left\{\left(\frac{1}{2-u-v}\right)(X_1+aY_1+X_2+aY_2)\ \middle|\ 
\begin{array}{l}
\displaystyle\sum_{i=1}^2 X_i+Y_i\geq 2(2-u-v) \\[6pt]
X_2Y_2\geq \tfrac{1}{2} \\[6pt]
X_1Y_1+X_2Y_2\geq (1-u)(1-v) \\[6pt]
X_1+Y_1\geq \tfrac{1}{4}\min(1-u,1-v) \\[6pt]
0\leq u,v\leq 1 \\[6pt]
0\leq X_i,Y_i \\[6pt]
\end{array}\right\}\\
&=\begin{cases} 2+\left(\frac{23-\sqrt{401}}{24}\right)(a-1) & 1\leq a\leq a^*\\ \frac{2\sqrt{2a}}{3} & a\geq a^*,\end{cases}
\end{align*}

where $a^*=\frac{(23+\sqrt{401})^2}{128}\approx 14.46.$
In the above minimization problem, $X_i$ and $Y_i$ represent the quantities $x(\Lambda^i)/d$ and $y(\Lambda^i)/d$ and $u$ and $v$ represent $A/d$ and $B/d$, respectively. 


In this computation, our bound is stronger than the volume constraint for $1\leq a\leq 2.348.$


Sub-case 2 Now, suppose that there are repeated columns of both forms, i.e. there are columns $\begin{pmatrix}e_{10}^Re_{01}^S\\ \emptyset \\ e_{rs}\end{pmatrix}$ and $\begin{pmatrix}e_{10}^Pe_{01}^Q\\ e_{pq} \\ \emptyset\end{pmatrix}$ occuring in the factorization of Theorem 3.1. Also, let $C=\begin{pmatrix}e_{10}^{A}e_{01}^B\\ \Lambda^1\\ \Lambda^2\end{pmatrix}$ be a column appearing in the factorization that is not one of the two repeated columns, and where we may take $A+B$ to be arbitrarily large by taking $d\to\infty.$

By merging the first two columns with an empty generator and considering the index, we find that $$I(e_{10}^{R+P}e_{01}^{S+Q})=I(e_{10}^Re_{01}^S)+I(e_{01}^Re_{01}^Q),$$ which is only possible when either $R=P=0$ or $S=Q=0.$ In either case, the equalities of Lemma 3.3 guarantee that the nonzero exponent is $1.$ Now, if either $(r,s)$ or $(p,q)$ equals $(0,1)$, then the action inequality applied to either column reads $c\geq a,$ which is a stronger bound than our statement when $a\geq 2$.

The action inequality applied to $C$ then reads

\begin{align*}
c(A+B)&\geq (x(\Lambda^1)+x(\Lambda^2))+a(y(\Lambda^1)+y(\Lambda^2))\\&=\left(x(\Lambda^1)+x(\Lambda^2)+y(\Lambda^1)+y(\Lambda^2)\right)
\end{align*}

The righthand side is at least $2(A+B)-1$ by the genus bound, so by taking $A+B\to\infty$ we obtain $c\geq 2$, which is our claimed bound when $1\leq a\leq 2.$

Hence, if either $(r,s)$ or $(p,q)$ is $(0,1),$ then $c\geq\text{min}(a,2)$

So, in this sub-case, we are just left with the possibility $(r,s)=(p,q)=(1,0).$ Since the roles of $e_{10}$ and $e_{01}$ are symmetric for the positive boundary $P(c,c)$, we may assume without loss of generality that $(R,S)=(P,Q)=(1,0)$ and so the two repeated columns are $\begin{pmatrix}e_{10}\\ \emptyset \\ e_{10}\end{pmatrix}$ and $\begin{pmatrix}e_{10}\\ e_{10}\\ \emptyset\end{pmatrix}.$


Since both repeated columns have $e_{10}$ in the first component, we may assume that $B>A$ in the definition of the column $C.$

By merging the repeated columns with $C$ and comparing index, we find that $y(\Lambda^1)=y(\Lambda^2)=B.$

The action inequality applied to $C$ then reads

\begin{align*}
c(A+B)&\geq (x(\Lambda^1)+x(\Lambda^2))+a(y(\Lambda^1)+y(\Lambda^2))\\&=\left(x(\Lambda^1)+x(\Lambda^2)+y(\Lambda^1)+y(\Lambda^2)\right)+(a-1)\left(y(\Lambda^1)+y(\Lambda^2)\right)\\
&=\left(x(\Lambda^1)+x(\Lambda^2)+y(\Lambda^1)+y(\Lambda^2)\right)+2B(a-1)
\end{align*}

Combining with the genus bound $$x(\Lambda^1)+x(\Lambda^2)+y(\Lambda^1)+y(\Lambda^2)\geq 2(A+B)-1$$ and taking $A+B\to\infty$ gives the bound $c\geq 2+(a-1)=1+a$ (since $B>A$ by assumption).

So, altogether in the case that there are repeated columns of both forms, we find $c\geq\text{min}(2,a,1+a)=\text{min}(2,a),$ which agrees with our claimed bound for $c$ when $1\leq a\leq 2$ and is larger than it for $a>2.$

Taking the minimum of the possible lower bounds for $c$ as a function of $a$ in all above cases gives the statement of Theorem \ref{thm:two-polydisks-polydisk}.
\end{proof}

While we only calculated with convex toric domains in this section, we state a counterpart of Theorem \ref{thm:main-convex} for the concave case.

\begin{theorem}\label{thm:main-concave}
Let $X_{\Omega_i}$ for $i=1,\ldots,m$ and $X_{\Omega'}$ be concave toric domains. Suppose that there is a symplectic embedding $\varphi:\coprod_{i=1}^m X_{\Omega_i}\hookrightarrow X_{\Omega'}$. Let $\Lambda^1,\ldots,\Lambda^m$ be concave generators such that $\Lambda^i$ is maximal for $\Omega_i$ for each $i$. Then there exists a concave generator $\Lambda'$ and factorizations $\Lambda'=\Lambda_1'\ldots\Lambda_n'$ and $\Lambda^i=\Lambda_1^i\ldots\Lambda_n^i$ for each $i$ such that the following conditions hold:
\begin{itemize}
\item{(a)} We have $I(\Lambda')=\sum_{i=1}^m I(\Lambda^i).$
\item(b) For each subset $S\subseteq\{1,\ldots,n\}$, we have $\sum_{i=1}^m I(\prod_{j\in S}\Lambda_j^i)=I(\prod_{j\in S}\Lambda_j').$
\item(c) For each $j\in\{1,\ldots,n\}$, we have $\sum_{i=1}^m \mathcal{A}_{\Omega_i}(\Lambda_j^i)\leq \mathcal{A}_{\Omega'}(\Lambda_j').$
\item(d) For all $j\in\{1,\ldots,n\}$ we have $$\sum_{i=1}^m x(\Lambda_j^i)+y(\Lambda_j^i)-m(\Lambda_j^i)\geq x(\Lambda_j')+y(\Lambda_j')+\frac{1}{2}h(\Lambda_j')-1.$$
\item(e) Given $k,j\in\{1,\ldots,n\}$, if $\Lambda_k'\neq\Lambda_j'$ or if $\Lambda_k^i\neq\Lambda_j^i$ for any $i\in\{1,\ldots,m\}$, then $\Lambda'_k$ and $\Lambda'_j$ have no elliptic factors $e_{ab}$ in common.
\item(f) For all $j\in\{1,\ldots,n\}$ and all $i\in\{1,\ldots,m\}$, the following inequality holds whenever $I(\Lambda_j')>2$:
$$c_{11}(\Lambda_j')\geq c_{11}(\Lambda_j^i),$$
where $c_{ab}(\Lambda)$ for a concave generator $\Lambda$ is the value of $c$ for which the line $bx+ay=c$ is inwardly tangent to $\Lambda.$
\end{itemize}
\end{theorem}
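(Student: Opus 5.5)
The plan is to mirror the proof of Theorem \ref{thm:main-convex}, with the roles of positive and negative ends exchanged. From the embedding $\varphi$, shrink the domains slightly and smooth their boundaries, so that we get an exact symplectic cobordism $W=X_{\Omega'}\setminus\varphi\bigl(\coprod_i \mathrm{int}\,X_{\Omega_i}\bigr)$. Its positive end is $\partial X_{\Omega'}$ and its negative ends are the disjoint union of the $\partial X_{\Omega_i}$. For concave toric domains, after a perturbation of the contact forms, ECH generators below a given action level correspond to concave generators. Under this correspondence ECH index differences are computed by $I$ as in Definition \ref{def:concave-index}, symplectic action by $\mathcal{A}_{\Omega}$ with internal tangency, and the grading is such that each maximal generator $\Lambda^i$ represents a nonzero class. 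I would take these facts from the review in Section 4. The ECH of $\coprod_i\partial X_{\Omega_i}$ is the tensor product of the individual ECH groups, so the product class $\Lambda^1\otimes\cdots\otimes\Lambda^m$ is nonzero.

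\textbf{Step 1: existence of a current.} The cobordism map $\Phi:ECH(\partial X_{\Omega'})\to ECH(\coprod_i\partial X_{\Omega_i})$ is an isomorphism in the relevant grading, since both sides are $\Z/2$ in each even grading and the map respects the $U$-module structure. So some class on the positive side maps to $[\Lambda^1\otimes\cdots\otimes\Lambda^m]$. Maximality of each $\Lambda^i$ means it uniquely maximizes action in its index. I would use this in a filtered version of Hutchings' holomorphic curves axiom to deduce that for generic $J$ there is a broken $J$-holomorphic current $\mathcal{C}$ in the completion of $W$, asymptotic to some generator $\Lambda'$ at $+\infty$ and to $\Lambda^1,\ldots,\Lambda^m$ at the negative ends, with ECH index $0$. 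This gives (a). The phrase ``maximal'' plays the role here that ``minimal'' plays in the convex case, because in this direction the chain-level leading term is controlled from the negative end. Decompose $\mathcal{C}$ into connected components $C_1,\ldots,C_n$. The ends of $C_j$ give the factorizations $\Lambda'=\prod_j\Lambda'_j$ and $\Lambda^i=\prod_j\Lambda^i_j$.

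\textbf{Step 2: the listed conditions from properties of the components.}
\begin{itemize}
\item[(c)] Positivity of $\int d\lambda$ on each component gives the action inequality.
\item[(b)] By the index inequality, together with $I\geq 0$ for curves in a generic exact cobordism, every component and every union of components has $I=0$. The relative index additivity then yields the equality for every subset $S$.
\item[(d)] The index inequality $\mathrm{ind}\le I-2\delta$ and the partition conditions give the genus bound. I would compute $\chi$ and $c_\tau$ of each component in terms of $x$, $y$, $m$ and $h$, using the trivialization conventions from the concave case. This produces $x+y-m$ at the concave negative ends and $x+y+\tfrac12 h$ at the positive end. I expect the signs and the placement of the $h$ and $m$ terms to be swapped relative to the convex case, because the concave boundary has the opposite Conley--Zehnder asymptotics.
\item[(e)] This follows from the ECH partition conditions and the requirement that distinct components do not share elliptic orbits at the positive end unless they are repeated copies of the same trivial-like component.
\item[(f)] This comes from intersection positivity with a $J$-holomorphic plane or cylinder, whose intersection number with a component is computed by the tangency constant $c_{11}$. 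It is the analogue of the $\min(x,y)$ condition in the convex case.
\end{itemize}

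\textbf{Main obstacle.} I expect the main difficulty to be Step 1. One must make rigorous that nonvanishing of the cobordism map on the maximal class forces an actual holomorphic current with exactly these negative ends when the negative boundary is disconnected. This needs filtered ECH, the tensor product decomposition, and care with the perturbations of the concave contact forms and their degenerate orbits near the axes. The genus bound bookkeeping in (d) is the second most delicate point, and I would check it against simple examples such as balls.
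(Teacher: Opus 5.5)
The gap is in Step 1, which is exactly where the disconnected negative end and the maximality hypothesis enter. Your outline for (b)--(f) does follow the paper: components of the current give the factorizations, relative adjunction with the $CZ=-1$ trivialization and reversed partition conditions gives (d), and intersection positivity against the index-$0$ cylinder from $e_{11}$ to $e_{11}$ (the unique index-$2$ concave generator), together with Lemma~\ref{lem:index-shift}, gives (f).

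Your claim that $\Phi$ is an isomorphism because both sides are $\Z/2$ in each even degree is false for $m\geq 2$. Write $\sigma_p$ for the nonzero class in degree $p$. Under $ECH(\coprod_i\partial X_{\Omega_i})\cong\bigotimes_i ECH(\partial X_{\Omega_i})$, the degree-$2k$ part has dimension $\binom{k+m-1}{m-1}$. In fact $\Phi(\sigma_{2k})$ is the sum of $\sigma_{i_1}\otimes\cdots\otimes\sigma_{i_m}$ over \emph{all} ordered splittings $i_1+\cdots+i_m=2k$, so no class maps to the single tensor you want. The correct replacement is the argument from the proof of Theorem~\ref{thm:main-convex}. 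There is a separate $U$-map for each negative component, and each commutes with $\Phi$. Set $I=\sum_i I(\Lambda^i)$, apply $U_1^{I(\Lambda^1)/2}\otimes\cdots\otimes U_m^{I(\Lambda^m)/2}$ to $\Phi(\sigma_I)$, and compare with $\Phi(U^{I/2}\sigma_I)=[\emptyset]$. This shows the coefficient of $\sigma_{I(\Lambda^1)}\otimes\cdots\otimes\sigma_{I(\Lambda^m)}$ is nonzero.

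A homology-level statement is still not enough to produce a current. You need $\langle\phi(\alpha),\alpha_1\otimes\cdots\otimes\alpha_m\rangle\neq 0$ at the chain level. Knowing that $\Lambda^1\otimes\cdots\otimes\Lambda^m$ is a cycle in the right class would not force this, since $\phi(\alpha)$ could be a different homologous cycle; and maximality does not give you that cycle property anyway. What maximality actually provides, and what the paper uses, is the reverse containment. The formula $c_k(X_\Omega)=\max\{\mathcal{A}_\Omega(\Lambda): I(\Lambda)=2k,\ \text{all edges labeled } e\}$ for concave toric domains from \cite{Concave} implies that every cycle representing the nonzero class in $ECH_{2k}(\partial X_\Omega)$ contains the unique maximizer as a summand. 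That is why $\phi(\alpha)$ contains $\alpha_1\otimes\cdots\otimes\alpha_m$ for a representative $\alpha$ of $\sigma_I$ on $\partial X_{\Omega'}$. Then $\Lambda'$ is read off from whichever summand $\alpha'_i$ of $\alpha$ the resulting current starts at; it is an output of the argument, not something you choose.

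You also need to make the handling of $e_{10}$ and $e_{01}$ concrete. The paper perturbs the defining function so that $f'(0)<-M$ and $-1/M<f'(a)<0$. This forces every generator containing $e_{10}$ or $e_{01}$ to have index at least $2M$. For $M$ large, every generator in play is then a concave generator in the sense of Definition~\ref{def:concave-generator}.
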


\begin{remark}The main difference between this theorem and the version for convex toric domains is that the generators for the negative domains are chosen as inputs instead of a generator for the positive domain. Also, the genus bound in (d) is slightly different. 
\end{remark}

\begin{remark} This genus bound for the concave case was recently proven independently in Theorem 3.3 in \cite{Miranda} when $m=1.$
\end{remark}

\section{Review of Embedded Contact Homology for Convex and Concave Toric Domains}

In this section, we will review some notions from embedded contact homology and describe how they relate to convex and concave toric domains. The definitions and statements about ECH may all be found in \cite{LectureNotes} and the description of the ECH of convex and concave toric domains may be found in \cite{Beyond} and \cite{Concave}.
Given a closed contact $3$-manifold $Y$ with contact form $\lambda$ and contact structure $\xi=\ker(\lambda)$, define an \textit{ECH generator} for $(Y,\lambda)$ to be a (possibly empty) finite set of the form $\{(\alpha_i,m_i)\}$ where the $\alpha_i$ are distinct simple Reeb orbits on $Y$, modulo reparameterization, and $m_i\geq 1$ are integers such that $m_i=1$ whenever $\alpha_i$ is hyperbolic. The ECH chain group $ECC(Y,\lambda)$ is freely generated over $\Z/2$ as a vector space by such sets. Also, the \textit{action} of such an orbit set is defined as $\sum_i m_i\int_{\alpha_i}\lambda.$ This defines a filtration on each chain group: for each real number $L>0$, define $ECC^L(Y,\lambda)$ to be the subspace of $ECC(Y,\lambda)$ generated by orbit sets of action at most $L$.

Choose $J$ to be an almost complex structure on $\R\times Y$ that satisfies the following conditions. Below, $s$ denotes the $\R$-coordinate on $\R\times Y.$
\begin{itemize}
\item $J$ is translation-invariant in the sense that $\mathcal{L}_{\partial_s}J=0.$
\item On each cross section $\{s\}\times Y,$ identified with $Y$, we have that $J$ preserves the contact structure: $J\xi=\xi$, and rotates it positively in the sense that $d\lambda(v,Jv)>0$ \newline for $0\neq v\in\xi.$ 
\item $J\partial_s=R$ where $R$ is the Reeb vector field on $(Y,\lambda)$.
\end{itemize}

Given such a $J$, one may define a \textit{$J$-holomorphic curve from $\alpha$ to $\beta$} to be a (possibly disconnected) punctured Riemann surface $\Sigma$ with a $J$-holomorphic map $u:\Sigma\to\R\times Y.$ The punctures of $\Sigma$ are labeled as ``positive'' and ``negative'' so that near each positive puncture, $u$ is positively asymptotic to a cover of a simple Reeb orbit appearing in $\alpha$ and similarly for $\beta$ near the negative punctures. For each positive simple Reeb orbit $\alpha_i$, the sum of the covering multiplicities of each end asymptotic to a cover of $\alpha_i$ is to be the same as the multiplicity of $\alpha_i$ in $\alpha$, and similarly for the negative ends.
A \textit{$J$-holomorphic current} from $\alpha$ to $\beta$ is a formal sum $\mathcal{C}=\sum_k d_kC_k$ where the $C_k$ are distinct, connected, and somewhere-injective $J$-holomorphic curves, and the sum of the multiplicities of the $C_k$ at each positive or negative simple Reeb orbit equals the multiplicity of that Reeb orbit in $\alpha$ or $\beta$.
A $J$-holomorphic current $\mathcal{C}$ from $\alpha$ to $\beta$ represents a relative homology class $Z\in H_2(Y,\alpha,\beta).$ Given such a class, one defines its \textit{ECH Index} as

$$I(\alpha,\beta,Z)=c_\tau(Z)+Q_\tau(Z)+CZ_\tau^I(\alpha)-CZ_\tau^I(\beta),$$

where $c_\tau, Q_\tau$, and $CZ_\tau$ are, respectively, the relative first Chern number of $\xi$ over $Z$, the relative self-intersection number of $Z$, and the sum of Conley-Zehnder indices at each end of $Z.$ These quantities generally depend on $\tau$, which is a simultaneous choice of trivializations for $\xi$ treated as a $2$-plane bundle over each Reeb orbit appearing in $\alpha$ or $\beta$.
Also, define $\mathcal{M}_1(\alpha,\beta)$ to be the moduli space of $J$-holomorphic currents from $\alpha$ to $\beta$ that have ECH index of $1.$ Since $J$ is assumed to be $\R$-invariant, there is a natural $\R$-action on this space by translation.

The differential $\partial:ECC(Y,\lambda)\to ECC(Y,\lambda)$ is now defined as

$$\partial \alpha=\sum_{\beta} \#_2\left(\frac{\mathcal{M}_1(\alpha,\beta)}{\R}\right) \beta,$$

for generic $J$, where $\#_2$ denotes the mod 2 cardinality of a finite set and the sum is over all ECH generators $\beta$. This differential squares to zero and we define $ECH(Y,\lambda)$ to be the resulting homology. This does not depend on the choice of $J$ or the contact form representing the contact structure $\xi=\ker(\lambda)$, up to isomorphism. When $\lambda$ is understood, we will write it as just $ECH(Y)$. Also, an application of Stokes' Theorem to curves in the above moduli space shows that the differential may not increase the action of orbit sets; i.e., $ECC^L(Y,\lambda)$ is a subcomplex of $ECC(Y,\lambda)$ for any $L>0$, and hence, we obtain filtered homology groups $ECH^L(Y,\lambda)$ as well.
When $Y$ is diffeomorphic to $S^3$ (or, more generally, has $H_2(Y,\Z)=0$), the ECH index can also be used to define a $\Z$-grading on $ECC(Y,\lambda)$ via

$$I(\alpha)=I(\alpha,\emptyset,Z),$$

where $Z$ is the unique class of $H_2(Y,\alpha,\emptyset).$ (For general $Y$, this number will depend on the choice of $Z$ and only define a relative grading). With respect to this grading, the differential $\partial$ has degree $-1$. We may also define the quantities $c_\tau(\alpha)$ and $Q_\tau(\alpha)$ analogously. 
Suppose that $(Y,\lambda)$ is connected. Then there is a $\textit{$U$-map}$ of degree $-2$:
$$U:ECH_*(Y,\lambda)\to ECH_{*-2}(Y,\lambda),$$

defined similarly to the differential, except that instead of counting index $1$ curves modulo translation, the $U$-map on the chain level counts index $2$ curves passing through a fixed base-point. On homology, this induces a map that does not depend on the choice of generic basepoint. When $Y=S^3$ with the standard contact structure, it is calculated in \cite{LectureNotes} that $ECH(S^3)$ has a unique nonzero element of index $2k$ for $k\geq 0,$ which we will denote by $\zeta_k$. These elements form a \textit{$U$-chain} in the sense that $U\zeta_k=\zeta_{k-1}$ for $k\geq 1$. Since the boundary of a convex or concave toric domain is  contactomorphic to the standard $S^3$ via the map $y\mapsto \frac{y}{|y|}$, the same description is valid for them.

When $Y$ is disconnected, the $U$-map on homology will depend on which component of $Y$ the chosen basepoint lies in, and there is accordingly a resulting $U$-map for each component of $Y$.

We now describe the chain group $ECC(Y)$ when $Y$ is the boundary of a convex or concave toric domain.

Let $Y=\partial X_{\Omega}$ be a convex or concave toric domain with smooth defining function $f$ and let $(A,0)$ be the $x$-intercept of its defining curve. Then $\mu^{-1}(t,f(t))$ is always a torus when $t$ and $f(t)$ are both nonzero. The Reeb vector field is tangent to this torus, and when $f'(t)=-b/a$ is rational in reduced form, this torus is foliated by Reeb orbits of action $af(t)+bt.$ Call this torus $T_{ab}.$ The Reeb orbits of $Y$ are those on the tori $T_{ab}$ together with $e_{01}:=\mu^{-1}(0,f(0))$ and $e_{10}:=\mu^{-1}(A,0).$ Both $e_{10}$ and $e_{01}$ are elliptic.
We may perturb the contact form $\lambda_Y$ by multiplying it by a smooth function $f$ that is $C^0$-close to the constant function $1$. For any $L>0$ and any $\varepsilon>0$, this function may be chosen that $||f-1||_{C^0}<\varepsilon$ and after making this perturbation, each torus $T_{ab}$ of Reeb orbits with action smaller than $L$ is replaced by a pair of Reeb orbits $e_{ab}$ and $h_{ab},$ where $e_{ab}$ is elliptic and has slightly larger action than the orbits of the torus it corresponds to, and $h_{ab}$ is hyperbolic with slightly smaller action when $X_{\Omega}$ is convex, and vice-versa when concave. No other Reeb orbits of action smaller than $L$ are created. A trivialization for $\xi$ restricted to any $e_{ab}$ may be chosen so that the rotation angle is arbitrarily close to zero by choosing the perturbation $f$ appropriately, slightly positive in the convex case, and slightly negative in the concave case. Similarly, above every hyperbolic orbit $h_{ab}$, the contact structure $\xi$ may be trivialized so $h_{ab}$ has a Conley-Zehnder index of zero.  
It follows that ECH generators for convex or concave toric domains generated by Reeb orbits with action smaller than a fixed bound $L$ and don't contain $e_{10}$ or $e_{01}$ in the concave case may be identified with convex or concave generators as in Definitions \ref{def:convex-generator} and \ref{def:concave-generator}. We will see in the next section why we exclude $e_{10}$ and $e_{01}$ in the concave case.
For an ECH generator $\alpha$ associated with a convex or concave generator $\Lambda$, we may calculate the quantities $c_\tau(\alpha),Q_\tau(\alpha),$ and $I(\alpha)$ in terms of $\Lambda$ as follows:

\begin{itemize}
\item $c_\tau(\alpha)=x(\Lambda)+y(\Lambda)$
\item $Q_\tau(\alpha)=2\text{Area}(\Lambda)$ 
\item $I(\alpha)=I(\Lambda),$ 
\end{itemize}

where $I(\Lambda)$ is as defined in Section 2 for convex and concave generators.

Lastly, we give some background about cobordism maps in ECH. 
Given two closed (not necessarily connected) 3-dimensional contact manifolds $(Y^+,\lambda^+)$ and $(Y^-,\lambda^-),$ a \textit{weakly exact cobordism} from $(Y^+,\lambda^+)$ to $(Y^-,\lambda^-)$ is a 4-dimensional symplectic manifold $(X,\omega)$ such that
\begin{itemize}
\item $\partial X=Y^+-Y^-,$ where the orientation of $X$ is determined by $\omega\wedge\omega$, and the orientations of $Y^\pm$ are determined by $\lambda^\pm\wedge d\lambda^\pm$.
\item The restriction of $\omega$ to $Y^{\pm}$ equals $d\lambda^{\pm}.$
\end{itemize}

If, in addition, the symplectic form $\omega$ is exact, then we say that $(X,\omega)$ is a \textit{weakly exact} cobordism.

Given a weakly exact cobordism as above, we have cobordism maps on (filtered) ECH for any $L>0$:

$$\Phi^{X,L}:ECH^L(Y^+,\lambda^+)\to ECH^L(Y^-,\lambda^-)$$

satisfying the properties

\begin{itemize}
\item $\Phi^{X,L}([\emptyset])=[\emptyset]$ and
\item $\Phi^{X,L}\circ U_+=U_-\circ \Phi^{X,L},$
\end{itemize}

where $U_\pm$ are $U$-maps on $ECH^L(Y^\pm,\lambda^\pm)$ coming from components of $Y^\pm$ lying in the same component of $X.$

That there are cobordism maps satisfying these properties is Theorem 2.3 in \cite{Quant}. We will summarize the second property by saying that ``cobordism maps commute with $U$-maps''.

\section{Proofs of Theorems 3.1 and 3.11}
In this section, we will prove our main theorem and sketch a proof of a similar theorem for concave toric domains.

\begin{proof}[Proof of Theorem \ref{thm:main-convex}]

Put $k=\frac{1}{2}I(\Lambda').$ For $Y$ the boundary of a convex or concave toric domain and $p$ a nonnegative even integer, let $\sigma^Y_p$ be the nonzero class in $ECH_{p}(Y).$
Since $\Lambda'$ is minimal, the orbit set $\alpha$ it corresponds to represents a nonzero ECH class by Lemma 5.5 in \cite{Beyond}, which must be $[\alpha]=\sigma^{X_{\Omega'}}_{2k}.$ 
Next, note that $X=X_{\Omega'}\setminus\left(\coprod_{i=1}^m\varphi(X_{\Omega_i})\right)^\circ$ is a cobordism from $\partial X_{\Omega'}$ to $\coprod_{i=1}^m\partial X_{\Omega_i}.$ Since $\varphi$ is a symplectic embedding, the restriction of the standard symplectic form on $X_{\Omega'}$ to $X$ gives $X$ the structure of a weakly exact cobordism. Fix $L$ to be a real number larger than the action of $\alpha$ and consider the associated cobordism map $\Phi^{X,L}:\text{ECH}^L(\partial X_{\Omega'})\to\text{ECH}^L(\coprod_{i=1}^m \partial X_{\Omega_i}).$ We will suppress writing the action bound $L$ in the rest of the proof, except when it is needed.




Under the canonical isomorphism 

\begin{equation}\label{eq:tensor-decomp}
ECH(\coprod_{i=1}^m \partial X_{\Omega_i})\cong\bigotimes_{i=1}^m ECH(\partial X_{\Omega_i}),\end{equation}

we may expand

\begin{equation}\label{eq:phi-expansion}\Phi^X([\alpha])=\sum_{I:i_1+\ldots+i_m=2k} c_I \ \sigma_{i_1}^{X_{\Omega_1}}\otimes\ldots\otimes\sigma_{i_m}^{X_{\Omega_m}},\end{equation}



where the sum is over all partitions $I$ of $2k$ into an ordered sum of $m$ nonnegative even integers. We have a $U$ map on the lefthand side of (\ref{eq:tensor-decomp}) for each of the components $X_{\Omega_i}.$ Under this isomorphism, the $j^{\text{th}}$ $U$-map on the righthand side is given by $$\text{Id}_1\otimes\ldots\otimes U_j\otimes\ldots\otimes\text{Id}_m$$ for any $j\in\{1,\ldots,m\}$ where $\text{Id}_i$ is the identity map on $ECH(\partial X_{\Omega_i})$ and $U_j$ is the $U$-map on $ECH(\partial X_{\Omega_j}).$ We consider the class $\Phi^X(U^k[\alpha]).$ On one hand, $U^k[\alpha]=[\emptyset]$ from our description of the ECH of $S^3$, so $$\Phi^X(U^k[\alpha])=\Phi^X([\emptyset])=[\emptyset]\neq 0.$$ On the other hand, since cobordism maps commute with $U$-maps, $\Phi(U^k[\alpha])$ may be calculated by applying $U_1^{j_1}\otimes\ldots\otimes U_m^{j_m}$ to the righthand side of (\ref{eq:phi-expansion}) for any partition \newline $j_1+\ldots+j_m=k.$  Choosing the partition $j_l=\frac{1}{2}I_l,$ we see that all terms in the sum vanish by index considerations except for one corresponding to the partition $I:i_l=I_l,$ so $c_I\neq 0.$
Next, let $$\phi^X:ECC(\partial X_{\Omega'})\to ECC(\coprod_{i=1}^m \partial X_{\Omega_i})$$ be a cobordism map on the chain level (still filtered by action up to $L$) that induces $\Phi^X$ and counts broken $J$-holomorphic currents, in the sense that if an orbit set appears as a summand in $\phi^X(\alpha)$, then there is a broken $J$-holomorphic current in $\overline{X}$ from $\alpha$ to that orbit set, for some suitably chosen almost complex structure $J$ on the completed cobordism $\overline{X}$ (see Section 6.2 in \cite{Calabi} for the definition of when $J$ is admissible for $\overline{X}$, and Proposition 6.2 in \cite{Calabi} for a description of these chain-level cobordism maps in terms of $J$). By the previous analysis, $\phi^X(\alpha)$ includes a term as a summand $\beta^1\otimes\ldots\otimes\beta^m$ contributing to a representative for $\sigma^{X_{\Omega_1}}_{I_1}\otimes\ldots\otimes\sigma^{X_{\Omega_m}}_{I_m}$ where $0\neq \beta^i\in ECC_{I_i}(\partial X_{\Omega_i})$ are ECH generators. Let $\Lambda^1,\ldots,\Lambda^m$ be the convex generators corresponding to these ECH generators. 

Let $\mathcal{C}$ be an index zero holomorphic current counted by the coefficient $$\langle\phi^X(\alpha),\beta^1\otimes\ldots\otimes\beta^m\rangle\neq 0.$$ 

We write the current $\mathcal{C}$ as a sum of components $\mathcal{C}=\sum_{j=1}^n C_j.$ Compared to the definition of a current given in Section 4, we do not include coefficients for the component curves $C_j$, but instead allow a given curve to appear with multiplicity in the sum. We define the factorizations of the convex generators by setting $\Lambda_j^i$ to correspond to  the ends of $C_j$ in $\partial X_{\Omega_i},$ and similarly $\Lambda'_j$ for the ends of $C_j$ in $\partial X_{\Omega'}.$ Also, let $\beta_j^i$ and $\beta_j'$ respectively be the orbit sets corresponding to $\Lambda_j^i$ and $\Lambda'_j.$
 
The cobordism $(X,\omega,J)$ may be taken to be $L$-tame in the sense of Section 4.1 in \cite{Beyond} by an identical argument as given in the proof of Theorem 1.19 there. Hence each $C_i$ appearing in $\mathcal{C}$ is an unbroken holomorphic curve and each sub-current of $\mathcal{C}$ (in particular, each individual component $C_j$) also has index zero by the same argument as given in the proof of Theorem 4.8 in \cite{Beyond}.

When $C_i$ and $C_k$ are distinct curves appearing as components in $\mathcal{C}$, they may not have any elliptic ends in common, as this would violate the equality condition for the union bound in $0=I(C_1)+I(C_2)\leq I(C_1\cup C_2)=0.$ The argument is the same as in Proposition 4.8 in \cite{LectureNotes}. This proves (f).

It is easily seen from the definition of ECH index that when $\beta'$ and $\beta''$ are orbit sets in different components of a given boundary, we have $I(\beta'\beta'')=I(\beta')+I(\beta''),$ where multiplication of generators is interpreted as adding the multiplicities of each involved Reeb orbit. For $i\neq k$, since $\beta_j^i$ and $\beta_j^k$ are orbit sets in distinct boundaries $\partial X_{\Omega_i}$ and $\partial X_{\Omega_k}$ (and likewise for the positive orbit sets), it follows that 

$$0=I(C_j)=I(\beta_j')-\sum_{i=1}^m I(\beta_j^i)=I(\Lambda'_j)-\sum_{i=1}^m I(\Lambda_j^i),$$

proving (b). Since each sub-current of $\mathcal{C}$ also has index zero, item (c) follows similarly.

By a standard Stokes' Theorem argument applied to the curve $C_j$, we have \newline$\mathcal{A}_{X_{\Omega'}}(\beta_j^i)\geq\sum_{i=1}^m\mathcal{A}_{X_{\Omega_i}}(\beta_j^i).$ Since the action of a Reeb orbit $e_{ab}$ or $h_{ab}$ on a perturbed boundary may be made arbitrarily close to the action of an orbit on the torus $T_{ab}$ for the unperturbed boundary by taking the perturbation to the contact form to be small enough, and $\mathcal{A}_{\Omega}(\Lambda)$ equals the action of the orbit set corresponding to $\Lambda$ on the (unperturbed) boundary of $X_\Omega$, the action inequality (d) follows.

The inequality in part $(e)$ may be given as in \cite{Beyond} as a result of facts about the topological complexity $J_0$ of a curve. We instead give a direct proof from the relative adjunction formula, with the same types of arguments as we will see in the appendix. In the following, $n(C_j)$ is the number of ends (positive or negative) of the curve $C_j,$ and $n^\pm(C_j)$ are the number of positive and negative ends. Also, $n_e^\pm$ and $n_h^\pm$ are respectively the number of elliptic and hyperbolic orbits at positive/negative ends of $C_j$.  

We recall the relative adjunction formula (see \cite{LectureNotes} Section 3.3 in the case of a symplectization, though it remains unchanged for the cobordism case)

$$c_\tau(C)=\chi(C)+Q_\tau(C)+w_\tau(C)-2\delta(C),$$

where $C\in\mathcal{M}(\alpha,\beta)$ is a somewhere injective curve and $\delta(C)$ is a weighted count of singularities of $C.$ For generic $J$, connected curves $C$ of ECH index zero in a cobordism have $\delta(C)=0$ by the index inequality $\text{ind}(C)\leq I(C)-2\delta(C)$ (see 3.3 in \cite{LectureNotes} for the statement in symplectizations and Section 3.9 for the discussion of the cobordism case). As $J$ is generic, the Fredholm index on the lefthand side must be non-negative, which forces $\delta(C)=0$ too.  We apply this to $C=C_j$ in our factorization and use the formulae in Section 4 to evaluate the terms

\begin{itemize}
\item $c_\tau(C_j)=x(\Lambda'_j)+y(\Lambda'_j)-\sum_{i=1}^m x(\Lambda_j^i)+y(\Lambda_j^i)$
\item $Q_\tau(C_j)=2\text{Area}(\Lambda'_j)-2\sum_{i=1}^m \text{Area}(\Lambda_j^i)$ 
\end{itemize}

and hence obtain

$$
x(\Lambda_j')+y(\Lambda_j')-\sum_{i=1}^m x(\Lambda_j^i)+y(\Lambda_j^i)=2\text{Area}(\Lambda_j')-2\sum_{i=1}^m\text{Area}(\Lambda_j^i)+w_\tau(C_j)+\chi(C_j).
$$

To simplify this, we use the identity $$2\text{Area}(\Lambda)=I(\Lambda)-(x(\Lambda)+y(\Lambda)+m(\Lambda)-h(\Lambda)),$$ which follows from Pick's Theorem. Note that $h(\Lambda_j')=0$ by assumption.

\begin{align*}
2\left(x(\Lambda_j')+y(\Lambda_j')\right)-2\sum_{i=1}^m x(\Lambda_j^i)+y(\Lambda_j^i)&=\sum_{i=1}^m \left(m(\Lambda_j^i)-h(\Lambda_j^i)\right)-m(\Lambda_j')\\&+w_\tau(C_j)+2-2g(C_j)-n(C_j)
\end{align*}

We use the trivial bound $g(C_j)\geq 0$ and rearrange to obtain

\begin{align*}
\sum_{i=1}^m x(\Lambda_j^i)+y(\Lambda_j^i)-\frac{1}{2}h(\Lambda_j^i)&\geq x(\Lambda_j')+y(\Lambda_j')-\frac{1}{2}\sum_{i=1}^m m(\Lambda_j^i)+\frac{1}{2}m(\Lambda_j')-1\\ &+\frac{1}{2}\left(n-w_\tau(C)\right).
\end{align*}

We evaluate the last term as follows. Since $0=\text{ind}(C_j)=I(C_j)=0$, the equality condition for the index inequality in \cite{LectureNotes} tells us each positive end of $C_j$ is asymptotic to a simple Reeb orbit and no two ends of $C_j$ are asymptotic to the same orbit. We may write $w_\tau(C_j)=w_\tau^+(C_j)-w_\tau^-(C_j)$ where $w_\tau^\pm(C_j)$ is the contribution to $w_\tau(C_j)$ from the positive/negative orbits.  From the positive partition conditions, we have $w_\tau^+(C_j)=CZ^{I,+}(C_j)-CZ^{\text{ind},+}(C_j),$ where the superscript $+$ denotes the contribution to the Conley-Zehnder term from only the positive ends. In our trivialization, each end at an elliptic orbit contributes $1$ to the $CZ^{\text{ind},+}$ term and contributes its multiplicity to $CZ^{I,+}$, which is also $1.$ So, altogether, $w_\tau^+(C_j)=0$


By the above negative partition conditions, $w_\tau^-(C_j)= CZ^I(C_j)-CZ^{\text{ind}}(C_j)$ from the writhe bound. In our trivialization, each elliptic orbit $e_{ab}$ has a Conley-Zehnder index of $1$ and each hyperbolic orbit $h_{ab}$ has Conley-Zehnder index of $0$. Since each hyperbolic orbit has multiplicity $1$, we get a total contribution of $\sum_{i=1}^m h(\Lambda_j^i)-\sum_{i=1}^m m(\Lambda_j^i)$ to $CZ^I(C_j)$ from the negative orbits. We similarly see that the contribution to $CZ^\text{ind}(C)$ from the negative orbits may be computed as 

\begin{align*}n_e^-(C_j)&=n^-(C_j)-n_h^-(C_j)\\ &=n^-(C_j)-\sum_{i=1}^m h(\Lambda_j^i).
\end{align*}Substituting this data into the above yields

\begin{align*}
\sum_{i=1}^m x(\Lambda_j^i)+y(\Lambda_j^i)-\frac{1}{2}h(\Lambda_j^i)&\geq x(\Lambda_j')+y(\Lambda_j')+\frac{1}{2}m(\Lambda_j')-1-\frac{1}{2}\sum_{i=1}^m m(\Lambda_j^i)+\frac{1}{2}n(C_j)\\ &-\frac{1}{2}\left(-\sum_{i=1}^m m(\Lambda_j^i)+\sum_{i=1}^m h(\Lambda_j^i)+n^-(C)-\sum_{i=1}^m h(\Lambda_j^i)\right),
\end{align*}
which we may rearrange to achieve the genus bound: 
\begin{align*}
\sum_{i=1}^m x(\Lambda_j^i)+y(\Lambda_j^i)-\frac{1}{2}h(\Lambda_j^i)&\geq x(\Lambda_j')+y(\Lambda_j')+\frac{1}{2}m(\Lambda_j')-1+\frac{1}{2}n^+(C_j)\\
&= x(\Lambda_j')+y(\Lambda_j')+m(\Lambda_j')-1.
\end{align*}

Finally, condition (g) comes from intersection positivity. It is proved in \cite{Quant} that $I(C_1+C_2)\geq I(C_1)+I(C_2)+2C_1\cdot C_2,$ where $C_1\cdot C_2$ is the the algebraic count of intersections of $C_1$ with $C_2.$ In particular, $I(C_1+C_2)\geq I(C_1)+I(C_2)$ when $C_1$ and $C_2$ have no components in common.

Let $C_1$ be the curve $C_j$ in the factorization described above. Let $C_2$ be a $J$-holomorphic curve guaranteed by proof of the main theorem with positive end given by $e_{10}$ with index split $I_j=2$ (and hence $I_k=0$ for $k\neq j$). Both curves are necessarily connected and the index assumption $I(\Lambda_j')>2$ guarantees that they are not equal. The only two cases for the negative ends of $C_2$ are $e_{10}$ and $e_{01}.$ 

If the negative end of $C_2$ is given by $e_{10},$ then the above index inequality becomes $$I(\Lambda'_j e_{10})-I(\Lambda^i_j e_{10})\geq I(\Lambda'_j)-I(\Lambda^i_j).$$ Counting lattice points shows that $I(\Lambda e_{10})=I(\Lambda)+2y(\Lambda)+2$ for any convex generator $\Lambda,$ and substituting this gives $y(\Lambda'_j)\geq y(\Lambda^i_j).$ If instead, the negative end of $C_2$ is given by $e_{01}$, then we get an inequality $y(\Lambda'_j)\geq x(\Lambda^i_j),$ so in either case $y(\Lambda'_j)\geq\text{min}(x(\Lambda_j^i),y(\Lambda_j^i)).$ Repeating the argument with a curve $C_2$ with a positive end of $e_{01}$ completes the argument.
\end{proof}

Most of Theorem \ref{thm:main-concave} may be proven completely analogously to Theorem \ref{thm:main-convex}, so we just explain the differences here.
\begin{proof}

Given a concave toric domain $X_{\Omega}$ and a positive integer $M,$ we may slightly perturb the defining function $f$ so that $f'(0)<-M$ and $-1/M<f'(a)<0$ where $f(a)=0.$ After this perturbation, it is calculated in \cite{Concave} that the index of any ECH generator for $\partial X_{\Omega}$ that contains an $e_{10}$ or $e_{01}$ factor is greater than or equal to $2M$. Hence, by taking $M$ sufficiently large in what follows, we may assume that no ECH generators that appear have any $e_{10}$ or $e_{01}$ factors and thus the generators we consider are in correspondence with the concave generators of Definition \ref{def:concave-generator}.

Furthermore, in \cite{Concave}, it is shown in slightly different language that if $X_{\Omega}$ is a concave toric domain, then its ECH capacities are given by 
$$c_k(X_{\Omega})=\text{max}\left\{\mathcal{A}_{\Omega}(\Lambda):I(\Lambda)=2k \text{ and all edges of } \Lambda \text{ are labeled } ``e"\right\},$$
where $\Lambda$ is a concave generator.


It follows from the definition of ECH capacities that when $\Lambda$ is maximal for $\Omega$ with index $I(\Lambda)=2k$, every $\alpha\in ECC(\partial X_{\Omega})$ that represents the nonzero element of $ECH_{2k}(\partial X_{\Omega})$ must contain the orbit set corresponding to $\Lambda$ as a summand.

Now, let $\Lambda^1,\ldots,\Lambda^m$ be maximal for $\Omega_1,\ldots,\Omega_m$ and let $I=\sum_{i=1}^m I(\Lambda^i).$ 

As in the proof of Theorem \ref{thm:main-convex}, let $$\phi:ECC(\partial X_{\Omega'})\to \bigotimes_{i=1}^m ECC(\partial X_{\Omega_i})$$ be a chain map which induces the cobordism map on ECH and counts $J$-curves. Let \newline$\alpha\in ECC(\partial X_{\Omega'})$ represent the nonzero class of index $I.$ By the above reasoning, $\phi(\alpha)$ contains $\alpha_1\otimes\ldots\otimes\alpha_m$ as a summand.

When writing $\alpha=\sum_{i=1}^k\alpha_i'$ as a sum of ECH generators of index $I$, it follows that there is a $J$-holomorphic current $\mathcal{C}$ from some $\alpha'_i$ to the orbit set corresponding to $\alpha_1\otimes\ldots\otimes\alpha_m.$ Taking $\Lambda'$ to be the concave generator corresponding to $\alpha'_i,$ the factorizations of $\Lambda'$ and $\Lambda^1,\ldots,\Lambda^m$ correspond to decomposing $\mathcal{C}$ as a weighted sum of $J$-holomorphic curves, as in the proof of Theorem \ref{thm:main-convex}.

The genus bound is similar to the convex case, except that $\tau$ is chosen so that the Conley-Zehnder index of an elliptic orbit $e_{ab}^k$ is $-1$ instead of $1,$ and the index formula is slightly different. Given a concave generator $\Lambda$, applying Pick's Theorem to the index formula for a concave generator yields $2\text{Area}(\Lambda)=I(\Lambda)-x(\Lambda)-y(\Lambda)+m(\Lambda)-h(\Lambda).$ From considering the relative adjunction formula applied to the $j^{\text{th}}$ column of the matrix as in the convex case, we see

\begin{align*}
x(\Lambda_j')+y(\Lambda_j')-\sum_{i=1}^m \left(x(\Lambda_j^i)+y(\Lambda_j^i)\right)&=2\text{Area}(\Lambda_j')-2\sum_{i=1}^m\text{Area}(\Lambda_j^i)+2-2g-n+w_\tau(C_j).
\end{align*}

Applying Pick's formula in the same manner as in the convex case and rearranging yields:

\begin{align*}
2\left(x(\Lambda_j')+y(\Lambda_j')\right)-2\sum_{i=1}^m \left(x(\Lambda_j^i)+y(\Lambda_j^i)\right)&=\left(m(\Lambda_j')-\sum_{i=1}^m m(\Lambda_j^i)\right)-h(\Lambda_j')+2-2g-n+w_\tau(C_j).
\end{align*}

Finally, we use $g(C_j)\geq 0$ to obtain

\begin{align*}
\sum_{i=1}^m x(\Lambda_j^i)+y(\Lambda_j^i)&\geq x(\Lambda_j')+y(\Lambda_j')-1+\frac{1}{2}\left(\sum_{i=1}^m m(\Lambda_j^i)-m(\Lambda_j') \right)\\ &+\frac{1}{2}h(\Lambda_j')+\frac{1}{2}(n-w_\tau(C_j)).
\end{align*}

As in the convex case, since $\text{ind}(C_j)=I(C_j),$ we find that $w_\tau(C_j)=CZ^I(C_j)-CZ^{\text{ind}}(C_j)$ from the equality condition for the index inequality. Compared to the convex case, the partition conditions are reversed (see Section 3.9 in \cite{LectureNotes}). So, each positive simple elliptic orbit (at which $C_j$ has an end at a cover of) contributes $-1$ to $CZ^{\text{ind}}(C_j)$ and the negative of its multiplicity to $CZ^I(C_j)$. Each positive hyperbolic end contributes $0$ to both, so, altogether, we have $$w_\tau^+(C_j)=n_e^+(C_j)-m_e(\Lambda_j')=n^+(C_j)-m(\Lambda_j'),$$ where $m_e$ denotes the total multiplicity of all elliptic factors. The second equality holds because each positive hyperbolic orbit contributes $1$ to both terms on the righthand side. Each negative end (at an elliptic orbit by assumption) contributes $+1$ to both $CZ^{I}$ and to $CZ^{\text{ind}}$ since these ends are simple. So, altogether, $w_\tau(C_j)=w_\tau^+(C_j)=n^+(C_j)-m(\Lambda_j')$ with the same notation as in the proof of Theorem \ref{thm:main-convex}.

Hence, we find 
\begin{align*}
\frac{1}{2}(n(C)-w_\tau(C_j))&=\frac{1}{2}\left(n^+(C_j)+n^-(C_j)-n^+(C_j)+m(\Lambda'_j)\right)\\ &=\frac{1}{2}n^{-}(C_j)+\frac{1}{2}m(\Lambda_j')\\
&=\frac{1}{2}\sum_{i=1}^m m(\Lambda_j^i)+\frac{1}{2}m(\Lambda_j'),
\end{align*}
and the genus bound follows.
\end{proof}

Item (e) also follows from the partition conditions being reversed from the convex case.

The inequalities $c_{11}(\Lambda_j')\geq c_{11}(\Lambda_j^i)$ follow from intersection positivity and Lemma \ref{lem:index-shift} as in the convex case, except that now the only index $2$ generator is $e_{11}$, instead of $e_{10}$ or $e_{01}$.

\section{Isotopies of Symplectic Embeddings}

In this section, we will use the theory developed in previous sections to give constraints for when two symplectic embeddings of toric domains may be isotopic to each other.

For this, we need a somewhat stronger notion of cobordism.

We recall from \cite{LectureNotes} that a weakly exact symplectic cobordism $(X,\omega)$ from $(Y^+,\lambda^+)$ to $(Y^-,\lambda^-)$ is \textit{strongly exact} if there exists a $1$-form $\lambda$ on $X$ such that $d\lambda=\omega$ and $\lambda$ restricted to $Y^\pm$ equals $\lambda^\pm$. This is stronger than the condition $\omega\vert_{Y^\pm}=d\lambda\vert_{Y^\pm}$ that we needed previously.

Fortunately, this condition is still met in the cases we consider.

\begin{lemma}
Let $(X,\omega)$ be a weakly exact cobordism from $(Y^+,\lambda^+)$ to $(Y^-,\lambda^-).$ Suppose that $H^1(Y^\pm,\mathbb{R})=0.$ Then $X$ is a strongly exact cobordism.
\end{lemma}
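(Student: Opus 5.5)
Note first that the hypotheses do not literally say $\omega$ is exact on $X$; the paper's phrase ``weakly exact'' is only informative if exactness of $\omega$ is assumed. I will therefore read ``weakly exact'' as including that $\omega=d\mu$ for some $1$-form $\mu$ on $X$ (this holds in all our applications, e.g.\ for the restriction of the standard form on $\C^2$). Under that reading, the plan is to modify a global primitive of $\omega$ by a closed form so that it restricts to $\lambda^\pm$ on the boundary.

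Restrict $\mu$ to $Y^\pm$. Since $\omega|_{Y^\pm}=d\lambda^\pm$, the forms $\mu|_{Y^\pm}-\lambda^\pm$ are closed $1$-forms on $Y^\pm$. Because $H^1(Y^\pm,\R)=0$, they are exact: there are smooth functions $f^\pm$ on $Y^\pm$ with $\mu|_{Y^\pm}-\lambda^\pm=df^\pm$. This is the only place the cohomological hypothesis enters.

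Next, extend the boundary function $f$ (equal to $f^+$ on $Y^+$ and $f^-$ on $Y^-$) to a smooth function $F$ on $X$. Use collar neighborhoods $(-\epsilon,0]\times Y^+$ and $[0,\epsilon)\times Y^-$, set $F(t,y)=\rho(t)f^\pm(y)$ there with $\rho$ a cutoff equal to $1$ near $0$, and take $F=0$ elsewhere. Then put $\lambda=\mu-dF$. We have $d\lambda=d\mu=\omega$, and on $Y^\pm$, since the restriction of $dF$ is $d(F|_{Y^\pm})=df^\pm$, we get $\lambda|_{Y^\pm}=\mu|_{Y^\pm}-df^\pm=\lambda^\pm$. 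Hence $X$ is strongly exact.

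The only genuine subtlety is the existence of the global primitive $\mu$. If one wanted to avoid assuming it, one would need $[\omega]=0$ in $H^2(X)$, which does not follow from $H^1(Y^\pm)=0$ alone in general. So I would state explicitly that exactness of $\omega$ is part of the hypothesis, and note that it holds in the applications, where $X\subset\C^2$.
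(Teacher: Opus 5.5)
Your proof is correct and is essentially the paper's argument: take a global primitive of $\omega$, use $H^1(Y^\pm;\R)=0$ to write its difference from $\lambda^\pm$ on $Y^\pm$ as $df^\pm$, and subtract $d$ of an extension of $f^\pm$ to $X$. Your reading that ``weakly exact'' includes exactness of $\omega$ matches the paper's intent (its proof also starts from a primitive of $\omega$). Your collar construction of the extension simply fills in a step the paper leaves implicit.
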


\begin{proof}
Let $\lambda$ be a primitive for $\omega.$ Then $\left(\lambda\vert_{Y^-}\right)-\left(\lambda^-\right)$ is closed, and hence equal to $df^-$ for some smooth function $f^-:Y^-\to\R.$ Choose $f^+:Y^+\to\R$ similarly. Let $f:X\to\R$ be a smooth function that agrees with $f^\pm$ on $Y^\pm.$ Then $\lambda-df$ is a primitive for $\omega$ that agrees with $\lambda^\pm$ on $Y^\pm.$
\end{proof}

In particular, this holds when $Y^\pm$ is the boundary of a toric domain.
The importance of a cobordism being strongly exact is the following. 

\begin{proposition}
Suppose that $f_0,f_1:X^-\hookrightarrow X^+$ are symplectic embeddings and the associated cobordisms are strongly exact. Suppose also that they are isotopic by a path of symplectic embeddings in the sense that there is a smooth map $F:[0,1]\times X^-\to X^+$ such that $F(i,\cdot)=f_i$ for $i=0,1$ and for each $t\in [0,1],$ the map $f_t=F(t,\cdot):X^-\to X^+$ is a symplectic embedding representing a strongly exact cobordism. Then, viewing each $f_i$ as a cobordism from $\partial X^+$ to $\partial X^-$, any two associated cobordism maps $\phi_i:ECC_*(\partial X^+)\to ECC_*(\partial X^-)$ are chain homotopic, i.e. there exists a $\Z/2$-linear map $K:ECC_*(\partial X^+)\to ECC_{*+1}(\partial X^-)$ such that

$$\partial^- K+K\partial^+=\phi_1-\phi_0.$$

Here $\partial^\pm$ is the differential on $ECC(\partial X^\pm).$ Furthermore, the chain homotopy $K$ is action preserving: for all ECH generators $\alpha\in ECC_*(\partial X^+)$, if $\beta$ is a generator for $ECC_{*+1}(\partial X^-)$ such that $\langle K(\alpha),\beta\rangle\neq 0,$ then $\mathcal{A}(\alpha)\geq\mathcal{A}(\beta).$
\end{proposition}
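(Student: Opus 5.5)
The plan is to realize the isotopy as a one-parameter family of strongly exact cobordisms $X_t=X^+\setminus f_t(X^-)^\circ$. From this family we build a one-parameter family of completed cobordisms with almost complex structures, and let $K$ count index $-1$ $J_t$-holomorphic currents that appear at isolated times $t$. This is the standard chain homotopy construction in Floer theory. Since ECH cobordism maps are only known to be defined via Seiberg--Witten theory in general (Theorem 2.3 of \cite{Quant}), I would work at the chain level with the description in Proposition 6.2 of \cite{Calabi}. There the chain map $\phi_t$ counts (broken) $J_t$-holomorphic currents for a generic admissible $J_t$ on $\overline{X_t}$, which is legitimate when the cobordism is $L$-tame as in Section 4.1 of \cite{Beyond}.

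\emph{Step 1: identify the cobordisms.} By the isotopy extension theorem for symplectic embeddings (Hamiltonian or symplectic isotopy extension in the exact setting), extend $f_t$ to a family of diffeomorphisms $\Psi_t$ of $X^+$. Then all $X_t$ are identified with $X_0$, carrying a smooth family of symplectic forms $\omega_t$ with primitives $\lambda_t$ that restrict to $\lambda^\pm$ on the boundary. Strong exactness is used precisely here: it lets us choose $\lambda_t$ depending smoothly on $t$ with fixed boundary restrictions. The lemma above gives this on each slice, and the construction there, subtracting $df_t$, can be done smoothly in $t$. The completions $\overline{X_t}$ then share the same cylindrical ends, and we can choose a generic path $\{J_t\}$ of admissible almost complex structures agreeing with fixed $J^\pm$ on the ends.

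\emph{Step 2: define $K$ and prove the homotopy relation.} Define $\langle K\alpha,\beta\rangle$ as the mod 2 count of pairs $(t,\mathcal{C})$ with $\mathcal{C}$ a $J_t$-holomorphic current from $\alpha$ to $\beta$ of ECH index $-1$. Then analyze the ends of the one-dimensional parametrized moduli space of index $0$ currents. Its boundary points are:
\begin{itemize}
\item currents at $t=0,1$, giving $\phi_0$ and $\phi_1$;
\item index $-1$ currents at an interior time glued to an index $1$ symplectization current at either end, giving $\partial^-K+K\partial^+$.
\end{itemize}
Gluing and compactness follow Hutchings--Taubes, or, if one prefers to avoid the currents' gluing subtleties, the Seiberg--Witten definition with a parametrized family. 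In the latter case one uses the isomorphism of ECH with monopole Floer homology, under which a homotopy of cobordisms gives a chain homotopy, as in \cite{Quant}.

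\emph{Step 3: the action statement.} If $\langle K\alpha,\beta\rangle\ne0$, there is a $J_t$-holomorphic current $\mathcal{C}$ in $\overline{X_t}$ from $\alpha$ to $\beta$. Admissibility makes $\omega_t$ pair nonnegatively with $\mathcal{C}$, and Stokes' theorem with the exact form $d\lambda_t$ gives
\[
0\le\int_{\mathcal{C}}\omega_t=\mathcal{A}(\alpha)-\mathcal{A}(\beta).
\]
This uses that $\lambda_t$ restricts to $\lambda^\pm$ on the boundary, which is again where strong exactness enters. In merely weakly exact cobordisms a boundary correction term would appear.

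The main obstacle is Step 2. Specifically, the compactness and gluing analysis for parametrized families of ECH currents is delicate: multiply covered and index $-1$ or $-2$ components can appear at isolated times. The existing literature handles this via Seiberg--Witten theory, not curve counts. I would therefore first try to cite the Seiberg--Witten argument, namely that homotopic strongly exact cobordism structures induce chain homotopic maps on filtered ECH. I would combine this with the holomorphic curve axiom from \cite{Quant} and \cite{Calabi} to get the action-decreasing, curve-counting property of $K$.
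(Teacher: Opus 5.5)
Your proposal is essentially the paper's argument. The paper also regards the isotopy as a smooth family of strongly exact structures $\lambda_t$ on a single cobordism and invokes the Homotopy Invariance property of the chain-level cobordism maps in Prop.~6.2 of \cite{Calabi}, which supplies $K$ with the action control that your Stokes argument in Step 3 derives. The direct count of index $-1$ currents in your Step 2 should be dropped: as you note, it is not available in general because of multiply covered curves, and the cited Seiberg--Witten-based result is what actually produces $K$.
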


This is proved from the the Homotopy Invariance Property in Prop 6.2 in \cite{Calabi}. We may view the $1$-parameter family of cobordisms from the isotopy $f_t$ as a single cobordism with varying contact forms $\lambda_t^\pm.$ We may take a family $\lambda_t$ of $1$-forms on the cobordism restricting to $\lambda_t^\pm$ on $\partial X^{\pm}$ to be smooth to produce the required strong homotopy. 
We also need to explicitly describe a cobordism map in a simple case.

\begin{lemma}[Characterization of cobordism map for low index generators]
Let $X_{\Omega^-}\subset X_{\Omega^+}$ be an inclusion of convex toric domains. Then, for suitably chosen admissible $J,$ a cobordism map $\phi:ECC_*(\partial X_{\Omega^+})\to ECC_*(\partial X_{\Omega^-})$ associated to the inclusion satisfies $$\phi(e_{10}^+)=e_{10}^- \text{ and } \phi(e_{01}^+)=e_{01}^-,$$

where $e_{10}^\pm$ and $e_{01}^\pm$ are the $e_{10}$ and $e_{01}$ ECH generators for $\partial X_{\Omega^\pm}.$
\end{lemma}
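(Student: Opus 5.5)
The plan is to combine index and action considerations with the fact that $\phi$ induces the cobordism map. The generators of index $2$ for $\partial X_{\Omega^\pm}$ are exactly $e_{10}$ and $e_{01}$: a convex generator of index $2$ has $L(\Lambda)=2$ and no $h$ labels (any $h$ edge is non-axis and forces $L\geq 3$). The only such paths are the unit horizontal and vertical edges. Since $\phi$ preserves the grading, $\phi(e_{10}^+)$ is a combination of $e_{10}^-$ and $e_{01}^-$, and similarly for $\phi(e_{01}^+)$.

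To pin down the coefficients, I would use the explicit description of $\phi$ as counting (broken) $J$-holomorphic currents of index zero. The first step is to check that there is exactly one such current from $e_{10}^+$ to $e_{10}^-$. For this I would take $J$ on the completed cobordism to be $S^1\times S^1$-invariant, or at least preserving the complex axes near them. Then the cylinder $\overline{X}\cap\{w=0\}$ (respectively $\{z=0\}$) is $J$-holomorphic and connects $e_{10}^+$ to $e_{10}^-$ (respectively $e_{01}^+$ to $e_{01}^-$); here I take $e_{10}=\mu^{-1}(A,0)$, the circle in the $z$-axis. These cylinders are embedded and of index zero. They are regular by automatic transversality, since the Fredholm index is $0$ and the ends are elliptic.

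The second step is to rule out all other index-zero currents between these generators, in particular any current from $e_{10}^+$ to $e_{01}^-$. For this I would use intersection positivity with the axis cylinder. Suppose $C$ is a curve from $e_{10}^+$ to $e_{01}^-$ with $C\neq\{z=0\}\cap\overline{X}$. Its homological intersection with the $\{z=0\}$ cylinder can be computed via the linking and winding numbers of the ends, giving $0$ by the chosen trivialization. Intersection positivity plus the asymptotic winding bounds then force $C$ to be disjoint from that cylinder, and its ends to approach with the extremal winding. That yields a contradiction with the index formula; equivalently, the relative class would give $I(e_{10}^+e_{01}^+)\neq I(e_{10}^+)+I(e_{01}^+)+\ldots$, in the spirit of the appendix argument and of (g) in Theorem \ref{thm:main-convex}.

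An alternative, cleaner route for this step avoids explicit curves and uses homology. The class $[e_{10}^+]$ (or $[e_{01}^+]$) is not necessarily a cycle at chain level, so I would instead argue via a neck-stretching or scaling family. The inclusion is isotopic through inclusions to a very thin product-type picture. By the action filtration, $\phi(e_{10}^+)$ must have coefficient only on generators of action at most $\mathcal{A}^+(e_{10})=y(\Omega^+)$. Comparing with the $U$-map, $U(\phi(e_{10}^+))=\phi(U e_{10}^+)=\emptyset$, which shows the total coefficient is $1$.

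The main obstacle is the second step: excluding the ``crossing'' curve from $e_{10}^+$ to $e_{01}^-$ (and vice versa), since index alone does not distinguish it. My first attempt would be the symmetric-$J$ argument above. Using the $T^2$-action, any such curve would sweep a $T^2$-family of curves; but index zero forces isolation, so the curve must be $T^2$-invariant. The invariant curves are exactly the axis cylinders, which go from $e_{10}^+$ to $e_{10}^-$ and never cross. Genericity is the concern, and I would handle it by appealing to automatic transversality in dimension four for index-zero curves with elliptic ends, so that the invariant $J$ is already regular for these moduli spaces.
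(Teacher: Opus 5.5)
Your setup matches the paper's: the only index-$2$ generators are $e_{10}$ and $e_{01}$, and the axis cylinder from $e_{10}^+$ to $e_{10}^-$ is $J$-holomorphic for a suitable $J$ and survives generic perturbation by automatic transversality. Nonvanishing of $\phi(e_{10}^+)$ then comes from homology. The gap is in the key step, excluding a curve $C$ from $e_{10}^+$ to $e_{01}^-$.

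\textbf{Wrong cylinder.} You pair $C$ with the $\{z=0\}$ cylinder $C_3$. By your own labeling, $C_3$ runs from $e_{01}^+$ to $e_{01}^-$, so it shares the \emph{negative} end with $C$. This pairing gives no contradiction. We have $I(C+C_3)=I(e_{10}e_{01})-I(e_{01}^2)=6-4=2$, so $I(C+C_3)\ge I(C)+I(C_3)+2\,C\cdot C_3$ only gives $C\cdot C_3\le 1$. At the level of linking, $Q_\tau(C,C_3)=1$ and the negative ends at $e_{01}^-$ link at least once. So intersection positivity forces $C\cdot C_3=0$ with extremal winding, which is consistent. Your claimed ``contradiction with the index formula'' never materializes.

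\textbf{The fix.} Use the cylinder $C_1=\overline{X}\cap\{w=0\}$, which shares the \emph{positive} end $e_{10}^+$ with $C$. Since $C\neq C_1$ (their negative ends differ), $0=I(C_1)+I(C)\le I(C_1+C)=I(e_{10}^2)-I(e_{10}e_{01})=4-6=-2$, a contradiction. Equivalently, $Q_\tau(C_1,C)=-1$ while positive ends at a common orbit have $\ell_\tau\le 0$, so $C_1\cdot C\le -1$. This is exactly the paper's argument, and it needs no symmetry of $J$.

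\textbf{Your fallbacks.} Neither closes the gap.
\begin{itemize}
\item The action and $U$-map route only shows $\phi(e_{10}^+)\neq 0$, i.e.\ that it is $e_{10}^-$ or $e_{01}^-$. Actions cannot decide between these in general: for $B(a)\subset B(c)$, both $e_{10}^-$ and $e_{01}^-$ have action $a\le c$.
\item The $T^2$-invariance route fails because the nondegenerate perturbation needed to define $ECC$ breaks the torus symmetry. That perturbation splits each Morse--Bott torus $T_{ab}$ into $e_{ab}$ and $h_{ab}$. Hence no $T^2$-invariant admissible $J$ exists on the ends of $\overline{X}$. Automatic transversality only gives regularity of curves you already have; it does not restore the symmetry.
\end{itemize}

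A minor point: $e_{10}^\pm$ and $e_{01}^\pm$ are cycles at the chain level, since there are no generators of index $1$.
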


\begin{proof}
We just check that $\phi(e_{10}^+)=e_{10}^-$ for $J_0$ as described below, as the other equality is completely analogous. 

Let $\overline{X}$ denote the completion of the associated cobordism from $\partial X_{\Omega^+}$ to $\partial X_{\Omega^-}.$ Let $J_0$ be the standard complex structure on $X_{\Omega^+}\setminus (X_{\Omega^-})^\circ$ away from small neighborhoods of the boundary components, extended as in Prop 4.1 in \cite{Munteanu} to the completed cobordism $\overline{X}$ so that there is a trivial holomorphic cylinder $C_1$ in the moduli space $\mathcal{M}_{J_0}(e_{10}^+,e_{10}^-)$ (given by the $z_2$-coordinate axis in the cobordism region). That this cylinder persists for small generic perturbations of $J_0$ follows from the following automatic transversality result.
\begin{proposition}(Lemma 4.1 in \cite{CylHom})
Let $u$ be an immersed $J$-holomorphic curve in $X$ with ends at nondegenerate Reeb orbits. Suppose that $2g-2+h_+<\text{ind}(u)$, where $g$ is the genus of domain of $u$ and $h_+$ is the number of ends of $u$ at positively hyperbolic Reeb orbits. Then $u$ is cut out transversely. 
\end{proposition}

The trivial cylinders considered have genus zero with only elliptic ends, so the hypotheses are met and they are cut out transversely. Hence, for generic $J$ that is $C^\infty$-close to $J_0$, there still exists a cylinder from $e_{10}^+$ to $e_{10}^-$ that is cut out transversely.  

We check that $\mathcal{M}_J(e_{10}^+,e_{01}^-)$ is empty. Suppose for contradiction that there existed a holomorphic curve $C_2$ in this moduli space. Then, by the union bound we have $$I(C_1\cup C_2)\geq I(C_1)+I(C_2)=0.$$

On the other hand,

$$I(C_1\cup C_2)=I(e_{10}^2)-I(e_{10}e_{01})=4-6=-2.$$

Since chain-level cobordism maps on ECH count holomorphic curves, it follows that there is no $e_{01}^-$ term in $\phi(e_{10}^+).$ Also, $\phi$ induces an isomorphism on ECH, and $e_{10}^+$ represents a nonzero ECH class for $X_{\Omega^+},$ so $\phi(e_{10}^+)$ represents a nonzero ECH class for $X_{\Omega^-}.$ The only possibility left is $\phi(e_{10}^+)=e_{10}^-.$
\end{proof}

We are now ready to study some examples of isotopies of symplectic embeddings.
Let $r:\C^2\to\C^2$ be the reflection $r(z,w)=(w,z)$. Let $X_{\Omega^+}$ and $X_{\Omega^-}$ be toric domains such that $X_{\Omega^-}\cup (r(X_{\Omega^-}))\subseteq X_{\Omega^+}.$ Then we have two symplectic embeddings from $X_{\Omega^-}$ into $X_{\Omega^+}$: the standard inclusion $\iota$ as well as $r\circ\iota.$ We consider the question of when these two embeddings are isotopic to each other by a path of symplectic embeddings. 
Just as we described the cobordism map associated to $\iota$ on index 2 generators above in Lemma 6.3, we do the same for $r\circ\iota$ 

\begin{lemma}
Suppose that $X_{\Omega^{\pm}}$ are convex toric domains with $r(X_{\Omega^-})\subset\ X_{\Omega^+}.$ Let $\phi:ECC(\partial X_{\Omega^+})\to ECC(\partial X_{\Omega^-})$ be an associated cobordism map. Then for suitably chosen admissible $J$, we have $\phi(e_{10}^+)=e_{01}^-$ and $\phi(e_{01}^+)=e_{10}^-$
\end{lemma}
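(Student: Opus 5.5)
The plan is to mirror the proof of Lemma 6.3, transporting everything by the reflection $r$. Note first that $r$ is a symplectomorphism of $\C^2$ that preserves the radial vector field $V$ and interchanges the two coordinate axes. Moreover, $r(X_{\Omega^-})=X_{r(\Omega^-)}$, where $r(\Omega^-)$ is the reflection of $\Omega^-$ across the diagonal; this is again a convex toric domain. The cobordism $X_{\Omega^+}\setminus (r\circ\iota(X_{\Omega^-}))^\circ$ is therefore literally the cobordism associated with the inclusion $X_{r(\Omega^-)}\subset X_{\Omega^+}$. The only difference is that its negative boundary is identified with $\partial X_{\Omega^-}$ via $r$, a strict contactomorphism ($r^*\lambda=\lambda$, since $r$ commutes with $V$ and preserves $\omega$). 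We will choose the perturbations of the contact form and the almost complex structure on the negative end compatibly with $r$, so that $r$ induces a chain-level isomorphism $ECC(\partial X_{r(\Omega^-)})\to ECC(\partial X_{\Omega^-})$.

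Next we track the orbits under this identification. The orbit $e_{10}$ of $\partial X_{r(\Omega^-)}$ is $\mu^{-1}(x(r\Omega^-),0)$, the circle in the $z$-axis. Its image under $r$ is the circle in the $w$-axis, namely $\mu^{-1}(0,y(\Omega^-))$, which is the orbit $e_{01}^-$ of $\partial X_{\Omega^-}$; symmetrically, $e_{01}$ of $\partial X_{r(\Omega^-)}$ is sent to $e_{10}^-$. In general $r$ sends $e_{ab}\mapsto e_{ba}$ and $h_{ab}\mapsto h_{ba}$, since it swaps the roles of the two coordinates in $\mu$.

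Now apply Lemma 6.3 to the inclusion $X_{r(\Omega^-)}\subset X_{\Omega^+}$. This is legitimate because $r(X_{\Omega^-})\subset X_{\Omega^+}$ by hypothesis. For the admissible $J$ constructed there (standard complex structure in the interior region, extended as in \cite{Munteanu}, then generically perturbed using automatic transversality), the lemma gives $\phi(e_{10}^+)=e_{10}$ and $\phi(e_{01}^+)=e_{01}$ in $ECC(\partial X_{r(\Omega^-)})$. Composing with the chain isomorphism induced by $r$ then yields $\phi(e_{10}^+)=e_{01}^-$ and $\phi(e_{01}^+)=e_{10}^-$. Concretely, the trivial cylinder is the $z$-axis in the cobordism region: it runs from $e_{10}^+$ to the circle $r(\text{$z$-axis circle})$, which is $e_{01}^-$ in the coordinates of $\partial X_{\Omega^-}$.

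The main obstacle is bookkeeping rather than analysis. One must check that the perturbed contact forms and the chosen $J$ on the negative end can be taken $r$-equivariant, so that the generators $e_{ab},h_{ab}$ of $\partial X_{r(\Omega^-)}$ really correspond under $r$ to $e_{ba},h_{ba}$ of $\partial X_{\Omega^-}$. Once that is arranged, the index computation from Lemma 6.3 transfers directly. That computation uses $I(e_{10}^2)=4$ and $I(e_{10}e_{01})=6$ to exclude a curve to the wrong orbit via the union bound, and then uses that $\phi$ induces an isomorphism on $ECH_2$. These inputs are symmetric in the two generators, so no new estimate is required.
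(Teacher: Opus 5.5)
Your proposal is correct and follows the paper's own proof: apply Lemma 6.3 to the inclusion $r(X_{\Omega^-})\subset X_{\Omega^+}$, then use that $r$ carries the $e_{10}$ orbit of $\partial r(X_{\Omega^-})$ to the $e_{01}$ orbit of $\partial X_{\Omega^-}$ and vice versa. Your observations that $r$ is a strict contactomorphism, and that the perturbed contact form and $J$ on the negative end can be taken to be $r$-pushforwards, simply make explicit bookkeeping that the paper's one-line proof leaves implicit.
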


\begin{proof}
This follows from applying Lemma 6.3, substituting $r(X_{\Omega^-})$ for $X_{\Omega^-}.$ Since the $e_{10}$ orbit for $X_{\Omega}$ is set-theoretically the same as the $e_{01}$ orbit for $r(X_{\Omega^-})$ and vice-versa, the result follows. 
\end{proof}
We say that $X_{\Omega^-}$ and $X_{\Omega^+}$ are \textit{separated by a ball} if there exists a real number $R>0$ such that $X_{\Omega^-}\subseteq B(R)\subseteq X_{\Omega^+}$.

\begin{lemma}
Let $X_{\Omega^+}, X_{\Omega^-},r,\iota$ be as above. If $X_{\Omega^+}$ and $X_{\Omega^-}$ are separated by a ball, then $\iota$ and $r\circ\iota$ are isotopic by a path of symplectic embeddings.
\end{lemma}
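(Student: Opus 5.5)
The plan is to build the isotopy explicitly from unitary maps of $\C^2$ acting on the ball. The key observation is that $B(R)$ is invariant under all of $U(2)$. Moreover $U(2)$ is connected and contains both the identity and the reflection $r(z,w)=(w,z)$ (which is unitary, with determinant $-1$). Choose a smooth path $A_t$ in $U(2)$ with $A_0=\mathrm{Id}$ and $A_1=r$. Concretely, take
$$A_t=\begin{pmatrix}\cos(\pi t/2) & \sin(\pi t/2)\\ \sin(\pi t/2) & -\cos(\pi t/2)\end{pmatrix}\cdot D_t,$$
where $D_t$ is a diagonal unitary path correcting the determinant at $t=0$. Alternatively, take $A_t=\exp(tH)$ with $H$ skew-Hermitian and $\exp(H)=r$; such an $H$ exists because the exponential map of $U(2)$ is surjective. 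Each $A_t$ is complex linear and unitary, so it preserves $\omega=\frac{i}{2}\sum dz_i\wedge d\overline{z}_i$ and hence is a symplectomorphism of $\C^2$.

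With this path, define $f_t=A_t\circ\iota:X_{\Omega^-}\to\C^2$. Since $X_{\Omega^-}\subseteq B(R)$ and $A_t(B(R))=B(R)$, we get $f_t(X_{\Omega^-})\subseteq B(R)\subseteq X_{\Omega^+}$ for every $t$. Thus each $f_t$ is a symplectic embedding into $X_{\Omega^+}$, with $f_0=\iota$ and $f_1=r\circ\iota$. The map $F(t,x)=A_t x$ is smooth in $(t,x)$, so this is a smooth isotopy through symplectic embeddings.

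If the paper's notion of isotopy also requires each $f_t$ to give a strongly exact cobordism (as in the Proposition above), this follows from Lemma 6.1. The boundaries $\partial X_{\Omega^\pm}$ are diffeomorphic to $S^3$, so $H^1=0$, and each complement $X_{\Omega^+}\setminus f_t(X_{\Omega^-})^\circ$ is a weakly exact cobordism because $\omega$ is exact on $\C^2$.

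I expect no serious obstacle here. The only point to check carefully is the existence of a smooth path in $U(2)$ from $\mathrm{Id}$ to $r$. This is handled by writing $r=\exp(H)$ explicitly: $r$ has eigenvalues $1,-1$ with eigenvectors $(1,1),(1,-1)$, so take $H$ acting by $0$ on $(1,1)$ and by $i\pi$ on $(1,-1)$. One might worry about boundary contact forms along the path, but the contact form on $\partial(f_t(X_{\Omega^-}))$ is just the pushforward under a symplectomorphism, so nothing further is needed.
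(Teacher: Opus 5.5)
Your proposal is correct and is essentially the paper's argument: a path in $U(2)$ from the identity to $r$ consists of symplectomorphisms preserving $B(R)$, so composing it with $\iota$ gives an isotopy through symplectic embeddings landing in $B(R)\subseteq X_{\Omega^+}$. Your explicit path $A_t=\exp(tH)$ and your remark on strong exactness fill in details the paper leaves implicit; strong exactness also follows directly, since the standard Liouville form on $\C^2$ is $U(2)$-invariant.
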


\begin{proof}
Note that the transformation $r(z,w)=(w,z)$ is unitary. Since $U(2)$ is path-connected, there is a path of unitary transformations from the identity map on $\C^2$ to $r.$ Since unitary transformations preserve the unit ball and are symplectic, the restriction of such a path to the domain $X_{\Omega^-}$ yields the desired isotopy. 
\end{proof}

In some cases, being separated by a ball is also a necessary condition.

\begin{proof}[Proof of Theorem \ref{thm:reflection-isotopy}]
Suppose that $X_{\Omega^-}$ and $X_{\Omega^+}$ are convex toric domains. Then $\iota$ and $r\circ\iota$ are isotopic by a path of symplectic embeddings if and only if $X_{\Omega^+}$ and $X_{\Omega^-}$ are separated by a ball.

We already know that being separated by a ball is a sufficient condition.
For the other direction, suppose that there is a path of symplectic embeddings connecting $\iota$ to $r\circ\iota.$ For $i\in\{0,1\}$, let $$\phi_i:ECC_*(\partial X_{\Omega^+})\to ECC_*(\partial X_{\Omega^-})$$ be cobordism maps associated to $\iota$ and $r\circ\iota$ respectively. From Lemma 6.3, we have $\phi_0(e_{10}^+)=e_{10}^-$ and $\phi_0(e_{01}^+)=e_{01}^-,$ for $J$ as considered there. Also, for the moment, we shall assume that the almost complex structure on the completed cobordism associated to $r\circ\iota$ is $C^\infty$-close to the standard one, and in this case, we have $\phi_1(e_{10}^+)=e_{01}^-$ and $\phi_1(e_{01}^+)=e_{10}^-$ by Lemma 6.5. 
Since the embeddings $\iota$ and $r\circ\iota$ are isotopic, their corresponding cobordism maps are chain homotopic by Proposition 6.2. Let $K:ECC_*(\partial X_{\Omega^+})\to ECC_{*+1}(\partial X_{\Omega^-})$ be as given by Prop 6.2.  

\begin{equation}K\partial^++\partial^- K=\phi_1-\phi_0.\end{equation}

Also, $\partial e_{10}^+=\partial e_{01}^+=0$ since there are no convex generators of index 1. So, if we apply both sides of the chain homotopy equation (6.1) to $e_{10}$ we find that $\partial^- K(e_{10}^+)=e_{01}^--e_{10}^-.$ In particular, $K(e_{10}^+)\neq 0,$ so we must have $K(e_{10}^+)=h_{11}^-,$ as the righthand side is the only convex generator of index 3. Similarly, $K(e_{01}^+)=h_{11}^-.$ 
Since $K$ preserves the action filtration, we have that $$\mathcal{A}_{X_{\Omega^-}}(h_{11}^-)\leq\text{min} \left(\mathcal{A}_{X_{\Omega^+}}(e_{10}^+),\mathcal{A}_{X_{\Omega^+}}(e_{01}^+)\right).$$

Put $R=\mathcal{A}_{X_{\Omega^-}}(h_{11}^-).$ From the previous inequality and the convexity of the domains $X_{\Omega^\pm},$ it follows that $X_{\Omega^-}\subseteq B(R)\subseteq X_{\Omega^+}.$
If instead the almost complex structure $J_1$ associated to the embedding $r\circ\iota$ is not close to the standard one, it is possible that $\phi_1(e_{01}^+)=e_{01}^-$ and $\phi_1^+(e_{10})=e_{10}^-$. In this case, the above argument fails, as equation (6.1) does not rule out $K(e_{10}^+)=K(e_{01}^+)=0$. However, we may argue similarly. The space of almost complex structures on a completed cobordism is connected, so we may choose a (generic) path from the standard one $J_0$ to $J_1$. Fixing the embedding from $r\circ\iota$ and the contact form on $X_{\Omega^-}$, as in Prop 6.2 in \cite{Calabi}, we have a chain homotopy $H$ from this varying almost complex structure:

$$\phi_1^{J_1}-\phi_1^{J_0}=\partial^- H+H\partial^+,$$

where $\phi_1^{J_i}$ is a cobordism map associated to $J_i$ and $$H:ECC_*(\partial X_{\Omega^+})\to ECC_{*+1}(\partial X_{\Omega^-}).$$ 
Applying the same argument as above to this chain homotopy equation instead yields the same result.

\end{proof}

\begin{remark}
This result is equivalent to the corresponding statement for when $X_{\Omega^\pm}$ are both polydisks: given $\text{max}(a,b)\leq\text{min}(c,d)$, the inclusion $P(a,b)\hookrightarrow P(c,d)$ is isotopic to its reflection if and only if $a+b\leq\text{min}(c,d).$ This is equivalent because replacing $X_{\Omega^-}$ with the inscribed polydisk $P(a,b)$ where $(a,b)$ is a point on $\Omega^-$ with tangent slope of $-1$ and similarly replacing $X_{\Omega^+}$ with a circumscribed polydisk do not change any of the actions of the Reeb orbits considered.
\end{remark}

\begin{remark}This result is already known for inclusions into a square polydisk. Given positive real numbers $a,b,c$ with $\text{max}(a,b)\leq c$, it is shown in \cite{App} that $P(a,b)\hookrightarrow P(c,c)$ is isotopic to its reflection if and only if $a+b\leq c.$
\end{remark}
\appendix
\section{Contributions from Intersection Positivity}
In this appendix, we will see how we can further obstruct the existence of $J$-holomorphic curves using intersection positivity.

Throughout this section, let $X$ denote a cobordism from $\partial X_{\Omega^+}$ to $\coprod_{i=1}^m \partial X_{\Omega_i},$ of boundaries of convex toric domains, perturbed as in Section 4 so that the simple Reeb orbits of action smaller than some fixed, sufficiently large, bound $L$ are the familiar $e_{ab}$ and $h_{ab}.$ Let $J$ be a generic admissible almost complex structure on the completed cobordism $\overline{X}.$ Given orbit sets $\alpha$ and $\beta$ in the positive and negative domains, respectively, let $\mathcal{M}^J(\alpha,\beta)$ denote the moduli space of $J$-holomorphic currents in $\overline{X}$ positively asymptotic to $\alpha$ and negatively asymptotic to $\beta.$ All Reeb orbits considered in this section are assumed to have action smaller than $L$.

We first see how we can use intersection positivity to obtain more constraints on the convex generators $\Lambda^1,\ldots,\Lambda^m$ that may appear in the statement of Theorem \ref{thm:main-convex}. 

\begin{proposition}
Suppose that $C$ is a somewhere-injective holomorphic curve in the completed cobordism $\overline{X}$ with positive ends corresponding to the convex generator $\Lambda'$ and negative ends corresponding to the convex generators $\Lambda^1\ldots\Lambda^m.$ Similarly, suppose $\tilde{C}$ has ends corresponding to the convex generators $\tilde{\Lambda}'$ and $\tilde{\Lambda}^1,\ldots,\tilde{\Lambda}^m.$ If $C$ and $\tilde{C}$ have no components in common, then the following inequality holds

$$\sum_{i=1}^m \left(\text{Area}(\Lambda^i\tilde{\Lambda}^i)-\text{Area}(\Lambda^i)-\text{Area}(\tilde{\Lambda}^i)\right)+\sum_{i=1}^m m(\Lambda^i,\tilde{\Lambda}^i)\leq \text{Area}(\Lambda'\tilde{\Lambda}')-\text{Area}(\Lambda')-\text{Area}(\tilde{\Lambda}'),$$

where $m(\Lambda^j,\tilde{\Lambda}^j)$ is the sum over all elliptic orbits $e_{ab}$ of the common multiplicity of $e_{ab}$ in these generators. For example, $m(e_{10}^2e_{21},e_{10}^3e_{21}e_{31})=2+1=3.$
\end{proposition}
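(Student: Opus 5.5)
The plan is to compute the algebraic intersection number $C\cdot\tilde C$ of the two curves in $\overline{X}$ with respect to a fixed trivialization $\tau$, and then use intersection positivity $C\cdot\tilde C\geq 0$. Since $C$ and $\tilde C$ have no components in common, they meet in finitely many points, each counted positively. However, ends at a common Reeb orbit contribute an asymptotic term to the count. I would use the decomposition from \cite{Quant}: the quantity $Q_\tau(Z+\tilde Z)-Q_\tau(Z)-Q_\tau(\tilde Z)=2Q_\tau(Z,\tilde Z)$, and
$$Q_\tau(Z,\tilde Z)=\#(C\cap\tilde C)+\text{(linking terms at common positive orbits)}-\text{(linking terms at common negative orbits)}.$$
This is the same ingredient behind the inequality $I(C_1+C_2)\geq I(C_1)+I(C_2)+2C_1\cdot C_2$ used for part (g) of Theorem \ref{thm:main-convex}. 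By the formula $Q_\tau(\alpha)=2\text{Area}(\Lambda)$ from Section 4 and bilinearity, the left side equals
$$\Big(\text{Area}(\Lambda'\tilde\Lambda')-\text{Area}(\Lambda')-\text{Area}(\tilde\Lambda')\Big)-\sum_i\Big(\text{Area}(\Lambda^i\tilde\Lambda^i)-\text{Area}(\Lambda^i)-\text{Area}(\tilde\Lambda^i)\Big),$$
up to the factor conventions. Here the orbit sets in distinct negative components do not interact, so the cross term splits as a sum over $i$.

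Next I would bound the asymptotic linking contributions using the writhe and linking bounds from \cite{LectureNotes}, computed in the trivialization $\tau$ of Section 4. In that trivialization the elliptic orbits $e_{ab}$ have rotation angle slightly positive (convex case) and the hyperbolic orbits have $CZ=0$.

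At a positive end, two distinct braids around $e_{ab}$ with total multiplicities $p$ and $\tilde p$ have linking number at most $\lfloor\theta\rfloor$-type bounds, which here are $\leq 0$. For hyperbolic orbits with multiplicity $1$ the linking is $0$. So the positive ends contribute at most $0$ to $\#(C\cap\tilde C)$ in the right direction.

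At a negative end at a common elliptic orbit $e_{ab}$, the linking number of the two negative braids is at least $\min(p,\tilde p)$ times $\lceil\theta\rceil$, which here is $\geq$ the common multiplicity. This is where $m(\Lambda^i,\tilde\Lambda^i)$ enters: with rotation angle slightly above $0$, each strand of the negative braid winds at least once relative to $\tau$. Summing over common elliptic orbits gives exactly $\sum_i m(\Lambda^i,\tilde\Lambda^i)$ in the precise sense of the statement.

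Combining the identity with $\#(C\cap\tilde C)\geq 0$ and these bounds yields the stated inequality. The main obstacle is getting the signs and the exact form of the negative-end linking lower bound right. In particular, I would need to verify that the linking of two distinct negative braids of multiplicities $p,\tilde p$ at an elliptic orbit with monodromy angle $\theta\in(0,\epsilon)$ is at least $\min(p,\tilde p)$. I would try to justify this via the asymptotic eigenfunction description, where the winding of each negative end is $\geq\lceil p\theta\rceil/p$-type, and pairwise linking is bounded below by $\min$ of multiplicities times minimal winding. I would then double-check the normalization on $e_{11}$ examples, where $\text{Area}(e_{11}^2)-2\text{Area}(e_{11})=1$.
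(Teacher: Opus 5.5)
Your plan is essentially the paper's proof: polarize $Q_\tau$ to write $Q_\tau(C,\tilde C)$ as the positive area defect minus the sum of the negative ones, then use $C\cdot\tilde C=Q_\tau(C,\tilde C)+\ell_\tau(C,\tilde C)$, where $\ell_\tau$ is positive-end linking minus negative-end linking. Bound the positive-end linking above by $0$, and bound the negative-end linking at each common elliptic orbit below by $\sum_{i,j}\min(q_i,\tilde q_j)\geq\min(q,\tilde q)$; this lower bound comes from summing pairwise bounds over individual ends, since the winding bound $\lceil q\theta\rceil=1$ holds for each end's braid as a whole, not for each strand. Then conclude from $C\cdot\tilde C\geq 0$.

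One correction: your displayed decomposition has the linking signs reversed. It should read $Q_\tau(C,\tilde C)=\#(C\cap\tilde C)-\ell^{+}+\ell^{-}$, which is what your later steps implicitly use. With the sign as you wrote it, the two linking bounds only give a lower bound on $\#(C\cap\tilde C)$, and nothing follows.
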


\begin{proof}
We recall from \cite{Concave} that the relative intersection number $Q_\tau$ may be defined on a pair $(C,C')$ by 

$$Q_\tau(C,\tilde{C})=C\cdot \tilde{C}-\ell_\tau(C,\tilde{C}).$$

Since this is bilinear in the relative homology classes $[C],[\tilde{C}]$, it may be evaluated in terms of $Q_\tau(C)$ and $Q_\tau(\tilde{C})$ separately by a polarization identity:

\begin{align*}Q_\tau(C,\tilde{C})&=\frac{1}{2}\left(Q_\tau(C\cup \tilde{C})-Q_\tau(C)-Q_\tau(\tilde{C})\right)\\
&=\left(\text{Area}(\Lambda'\tilde{\Lambda}')-\sum_{i=1}^m \text{Area}(\Lambda^i\tilde{\Lambda}^i)\right)\\ &\phantom{{}={}}-\left(\text{Area}(\Lambda')-\sum_{i=1}^m \text{Area}(\Lambda^i)\right)-\left(\text{Area}(\tilde{\Lambda}')-\sum_{i=1}^m \text{Area}(\tilde{\Lambda}^i)\right).
\end{align*}

By the bounds for linking numbers in \cite{LectureNotes}, each term $\ell_\tau(\zeta^+,\tilde{\zeta}^+)$ is $\leq 0,$ and $\ell_\tau(\zeta^-,\tilde{\zeta}^-)$ is greater than or equal to the common minimum winding number of $\zeta^-,\tilde{\zeta}^-$ around their associated orbit $\gamma^-.$ 

For a fixed simple elliptic Reeb orbit $\gamma^-$ in $\partial X_{\Omega_j}$, let $q_1,\ldots,q_r$ and $\tilde{q}_1,\ldots,\tilde{q}_s$ denote the multiplicities of the ends of $C$ and $\tilde{C}$, respectively, at $\gamma^-$. In particular, $q:=\sum_{i=1}^r q_i$ equals the exponent of $\gamma$ in the factorization of $\Lambda^j,$ and likewise $\tilde{q}:=\sum_{i=1}^s \tilde{q}_i$ for $\tilde{\Lambda}^j$.

Then, the linking number of $C$ and $\tilde{C}$ at $\gamma^-$ is greater than or equal to $\sum_{i,j}\text{min}(q_i,\tilde{q}_j).$ It is straightforward to verify 

$$\sum_{i,j}\text{min}(q_i,\tilde{q}_j)\geq \text{min}(q,\tilde{q}),$$

and the result follows by summing over all Reeb orbits, and using the nonnegativity of the signed intersection number $C\cdot\tilde{C}.$
\end{proof}

We give some examples of how this proposition may be used to obstruct holomorphic curves. First, item (g) in Theorem \ref{thm:main-convex} may be strengthened to the following:

\begin{itemize}
\item (g$'$) For a convex generator $\Lambda$, let $m_{ab}(\Lambda)$ denote the exponent of $e_{ab}$ in $\Lambda$.  For all $j\in \{1,\ldots,n\}$ and $i\in\{1\ldots,m\}$, the following inequality holds:
$$\text{min}\left(x(\Lambda_j'),y(\Lambda_j'))\geq\text{min}(x(\Lambda_j^i)+\text{min}(1,m_{01}(\Lambda_j^i)),y(\Lambda_j^i)+\text{min}(1,m_{10}(\Lambda_j^i))\right).$$
\end{itemize}

To see this, apply Proposition A.1 to the curve $C$ corresponding to the $j$-th column, and $C'$ a curve with a single positive end asymptotic to $e_{10}$ and a single negative end asymptotic to a Reeb orbit in $\partial X_{\Omega_i}.$

If this negative end is $e_{10},$ then since $\text{Area}(e_{10}\Lambda)=y(\Lambda)+\text{Area}(\Lambda)$ for any convex generator, the above proposition gives

$$y(\Lambda_j^i)+\text{min}(1,m_{10}(\Lambda_j^i))\leq y(\Lambda_j').$$

If instead the negative end is $e_{01}$, then the same inequality holds, with the lefthand side replaced with $x(\Lambda_j^i)+\text{min}(1,m_{01}(\Lambda_j^i)).$ Repeating this with a curve $C'$ having a positive end at $e_{01}$ gives the result.

Frequently, Proposition A.1 can obstruct curves that otherwise would not be ruled out by Theorem \ref{thm:main-convex}.

\begin{example}\label{ex:three-copies-obstruction}
Suppose we have a symplectic embedding $\coprod_{i=1}^3 X_{\Omega_i}\hookrightarrow B(c)$ of convex toric domains into a ball. 

We use the above proposition to verify that there cannot exist a $J$-holomorphic current $ \mathcal{C}$ in the completed cobordism that is described by the column $\begin{pmatrix}e_{11}^5\\e_{10}^2e_{11}\\e_{10}^3e_{01}\\e_{10}^3e_{01}\end{pmatrix},$ even though such a current is not ruled out by Theorem \ref{thm:main-convex}.

Let $\mathcal{C}'$ be a current with positive ends given by $e_{11}^3$ and index split \newline$I(\Lambda^1)=I(\Lambda^2)=I(\Lambda^3)=6$, as in the proof of Theorem \ref{thm:main-convex}. It must be connected, as a direct search reveals that there does not exist a factorization $$\begin{pmatrix}e_{11}\\ \Lambda_1^1\\ \Lambda_1^2\\ \Lambda^3_1\end{pmatrix}\begin{pmatrix}e_{11}^2\\ \Lambda_2^1\\ \Lambda^2_2\\ \Lambda^3_2\end{pmatrix}=\begin{pmatrix}e_{11}^3\\ \Lambda^1\\ \Lambda^2\\ \Lambda^3\end{pmatrix}$$ or $$\begin{pmatrix}e_{11}\\ \Lambda^1_1\\ \Lambda_1^2\\ \Lambda_1^3\end{pmatrix}\begin{pmatrix}e_{11}\\ \Lambda_2^1\\ \Lambda_2^2\\ \Lambda_2^3\end{pmatrix}\begin{pmatrix}e_{11}\\ \Lambda_3^1\\ \Lambda_3^2\\ \Lambda_3^3\end{pmatrix}=\begin{pmatrix}e_{11}^3\\ \Lambda^1\\ \Lambda^2\\ \Lambda^3\end{pmatrix},$$ where each column satisfies the index constraint and the righthand side of either equation has the prescribed index split.

Note that $\mathcal{C}$ cannot have $\mathcal{C}'$ as a component because the generator $e_{10}^2e_{11}$ in the first negative component does not have any index $6$ factor. Hence Proposition A.1 applies.

If, for example, the curve $\mathcal{C}'$ is given by the column $\begin{pmatrix}e_{11}^3\\ e_{10}^3\\e_{10}^3\\ e_{10}^3\end{pmatrix}$ then the inequality of Proposition A.1 becomes

\begin{equation}\label{eq:example-bound}\sum_i \text{min}(3,m_{10}(\Lambda^i))+3\sum_i y(\Lambda^i)\leq 15\end{equation}

since $\text{Area}(e_{10}^3\Lambda)=\text{Area}(\Lambda)+3y(\Lambda)$ for any convex generator $\Lambda$.
This rules out the existence of a current $\mathcal{C}$ as the lefthand side of (\ref{eq:example-bound}) is $(2+3+3)+3(1+1+1)=17>15.$ 

The other possibilities for the ends of $\mathcal{C}'$ yield a similar contradiction.
\end{example}

\begin{remark} We only used a ball as the positive domain to guarantee that $e_{11}^3$ and $e_{11}^5$ were minimal, but if we use the fact that any convex generator with all edges labeled ``e'' represents a nonzero ECH class, then it may be replaced by a general convex toric domain $X_{\Omega}.$
\end{remark}

\begin{remark}
An interesting combinatorial question is whether obstructing $J$-holomorphic curves with this criterion may lead to sharper bounds than those obtained from Theorem 3.1 alone. For example, when considering the problem of symplectically embedding $\coprod_{i=1}^3 P(a,1)$ into $B(c)$, when $a$ is sufficiently large relative to $c$ (but still small enough that an embedding is possible), then for any fixed $d>0$, the only possibility for a column with first component of $e_{11}^d$ and index split $d^2+3d=2r_1+2r_2+2r_3$ is $\begin{pmatrix}e_{11}^d\\ e_{10}^{r_1}\\ e_{10}^{r_2}\\ e_{10}^{r_3}\end{pmatrix}$, and so if $\begin{pmatrix}e_{11}^f\\ \Lambda^1\\ \Lambda^2\\ \Lambda^2\end{pmatrix}$ is any other column generated by Theorem 3.1, an inequality similar to (7.1) must hold:

$$\sum_i\text{min}(r_i,m_{10}(\Lambda^i))+\sum_i r_iy(\Lambda^i)\leq d\cdot f.$$
\end{remark}

We may also apply intersection positivity to show that many moduli spaces can contain at most one $J$-holomorphic curve satisfying some conditions.

\begin{proposition}\label{thm:uniqueness-index-zero}
Let $C\in \mathcal{M}^J(\alpha,\beta)$ satisfy the following conditions. 
\begin{itemize}
\item All components of $C$ have genus zero.
\item $C$ is somewhere-injective.
\item The ECH index of $C$ is zero.
\item All orbits in $\alpha$ and $\beta$ are elliptic.
\end{itemize}

Then, $C$ is the unique index zero and somewhere-injective element of $\mathcal{M}^J(\alpha,\beta).$
\end{proposition}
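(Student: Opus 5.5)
Suppose $\tilde C\in\mathcal{M}^J(\alpha,\beta)$ is another somewhere-injective current of ECH index zero with $\tilde C\neq C$. The plan is to derive a contradiction from intersection positivity combined with the relative adjunction formula, in the spirit of Proposition A.1. First I reduce to the case where $C$ and $\tilde C$ have no components in common. If they share a component $C_0$, write $C=C_0+C_1$ and $\tilde C=C_0+\tilde C_1$. Since every sub-current of an index zero current counted here again has index zero (as in the proof of Theorem \ref{thm:main-convex}), $C_1$ and $\tilde C_1$ are index zero currents with the same ends as each other. They still have all ends elliptic, all components of genus zero, and no common components. So it suffices to treat the disjoint-component case.

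Next I compute the relative intersection pairing. Because $C$ and $\tilde C$ have the same ends, they represent classes differing by an element of $H_2(\overline X)$. That group vanishes, since $X$ is the complement of balls-like domains in a convex toric domain, hence has $H_2=0$ up to the boundary spheres. Therefore $Q_\tau(C,\tilde C)=Q_\tau(C)$, and by bilinearity
\[C\cdot\tilde C=Q_\tau(C)+\ell_\tau(C,\tilde C).\]
I then evaluate $Q_\tau(C)$ using the relative adjunction formula with $\delta(C)=0$ (from genericity and index zero, as in the proof of (e)) and $g=0$:
\[Q_\tau(C)=c_\tau(C)-\chi(C)-w_\tau(C).\]
The ECH index being zero gives $c_\tau+Q_\tau+CZ^I_\tau(\alpha)-CZ^I_\tau(\beta)=0$. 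Combining this with the partition conditions from the equality case $\operatorname{ind}=I$, and with the trivialization in which elliptic orbits have rotation angle near $0^+$, should express $Q_\tau(C)$ purely in terms of the number of ends. The result should roughly be $Q_\tau(C)=-(\text{number of negative ends})+\dots$, with the positive ends contributing zero writhe as computed in the proof of (e).

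The key step, and the main obstacle, is bounding the linking term. At positive ends, $C$ and $\tilde C$ both have simple ends at the same orbits, and the linking number there is at most $0$ (the bound quoted in Proposition A.1). At each negative elliptic orbit with end multiplicities $q_i$ for $C$ and $\tilde q_j$ for $\tilde C$, the partition conditions force the two curves to have the same partitions. I then need a sharp bound of the form $\ell_\tau\leq -\sum_{i,j}\min(q_i,\tilde q_j)$ with the sign convention of the paper, together with the analogous self-linking used in the writhe bound. The aim is to show $Q_\tau(C)+\ell_\tau(C,\tilde C)<0$, which contradicts $C\cdot\tilde C\geq 0$.

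To carry this out I would compare termwise: the writhe bound gives $w^-_\tau$ in terms of the same winding numbers that bound $\ell_\tau$ at negative ends. With $\chi=2-n$ per component, the $+2$ from each genus zero component is what should make the total strictly negative. I expect the delicate part to be the bookkeeping of signs and the precise inequalities for $\ell_\tau$ at multiply covered negative elliptic ends. This is the step I would check first, beginning with the case where all ends are simple.
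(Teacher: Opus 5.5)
Your plan follows the paper's proof. You delete common components and write $C\cdot\tilde C=Q_\tau(C)+\ell_\tau(C,\tilde C)$; your observation that $H_2(X)=0$ justifies $Q_\tau(C,\tilde C)=Q_\tau(C)$, which the paper uses without comment. You then evaluate $Q_\tau(C)$ by adjunction with $g=\delta=0$ and the index-zero partition conditions, and bound the linking term by $\le 0$ at positive ends and by $-\sum_{i,j}\min(q_i,\tilde q_j)$ at negative ends. One correction to your anticipated bookkeeping: the computation gives $Q_\tau(C)=m^--k$, where $m^-$ is the total multiplicity of $\beta$ and $k$ the number of components, not a negative contribution from the negative ends. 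So each negative orbit $\gamma$ contributes $+q_\gamma$, which the linking bound $-q_\gamma$ cancels exactly, since $C$ has a single end at $\gamma$. Strict negativity comes only from the $-k$, leaving $C\cdot\tilde C\le -k<0$.
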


Before proving this, we need a couple of preliminary results.

First, we have the following computation.
\begin{lemma}\label{lem:area-lemma}
Let $C=\sum_{j=1}^k C_j\in \mathcal{M}^J(\alpha,\beta)$ be somewhere-injective, with all orbits in $\alpha$ and $\beta$ elliptic. Let $\Lambda^+$ and $\Lambda^1,\ldots,\Lambda^m$ be the convex generators corresponding to $\alpha$ and $\beta$. Then,

\begin{align*}
&\sum_{i=1}^m x(\Lambda^i)+y(\Lambda^i)\\=&x(\Lambda^+)+y(\Lambda^+) + \frac{1}{2}\left(m(\Lambda^+)-\sum_{i=1}^m m(\Lambda^i)\right)-k \\ &+ \sum_{j=1}^k g(C_j)+\frac{1}{2}\left(n(C)-w_\tau(C)-I(C)\right)+\delta(C),
\end{align*}
where $n(C)$ is the number of ends of $C$ (or equivalently, the number of punctures of the domain Riemann surface).
\end{lemma}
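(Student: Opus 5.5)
The identity will follow from the relative adjunction formula applied to the whole somewhere-injective current $C=\sum_{j=1}^k C_j$. I will then rewrite $Q_\tau(C)$ through Pick's Theorem, in the same way as in the proof of the genus bound (e) of Theorem \ref{thm:main-convex}. The difference from that proof is that I will not use $\operatorname{ind}=I$, $g\ge 0$, or $\delta=0$. Every term is kept, so the result is an exact identity rather than an inequality.

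First I recall the adjunction formula $c_\tau(C)=\chi(C)+Q_\tau(C)+w_\tau(C)-2\delta(C)$. I apply it to the (possibly disconnected) somewhere-injective curve $C$. Here $\delta(C)$ counts singularities, including intersections between distinct components, and $\chi(C)=\sum_j(2-2g(C_j))-n(C)=2k-2\sum_j g(C_j)-n(C)$. Next I insert the formulas of Section 4, which are additive over the components of $\partial X_{\Omega^+}$ and of $\coprod_i\partial X_{\Omega_i}$:
\[
c_\tau(C)=x(\Lambda^+)+y(\Lambda^+)-\sum_{i=1}^m\bigl(x(\Lambda^i)+y(\Lambda^i)\bigr),\qquad Q_\tau(C)=2\text{Area}(\Lambda^+)-2\sum_{i=1}^m\text{Area}(\Lambda^i).
\]
Since all orbits are elliptic, $h=0$ for every generator involved. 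The Pick identity $2\text{Area}(\Lambda)=I(\Lambda)-x(\Lambda)-y(\Lambda)-m(\Lambda)$ together with $I(C)=I(\Lambda^+)-\sum_i I(\Lambda^i)$ (additivity of the index over boundary components, as in the proof of (b)) then gives
\[
Q_\tau(C)=I(C)-c_\tau(C)-m(\Lambda^+)+\sum_{i=1}^m m(\Lambda^i).
\]

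Substituting this into the adjunction formula puts $c_\tau(C)$ on both sides:
\[
2c_\tau(C)=2k-2\sum_j g(C_j)-n(C)+I(C)-m(\Lambda^+)+\sum_i m(\Lambda^i)+w_\tau(C)-2\delta(C).
\]
I solve for $c_\tau(C)$, and then write $\sum_i\bigl(x(\Lambda^i)+y(\Lambda^i)\bigr)=x(\Lambda^+)+y(\Lambda^+)-c_\tau(C)$. Rearranging yields exactly the claimed expression, with the $-k$, $+\sum g$, $\tfrac12(n-w_\tau-I)$ and $+\delta$ terms.

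The only step needing care is the bookkeeping for a disconnected current. One must check that the adjunction formula holds for the union with $\delta(C)$ absorbing the intersections between components, and that $c_\tau$, $Q_\tau$ and $I$ are evaluated on the total relative class, with the chosen trivializations $\tau$ matching those used in Section 4. I expect no genuine obstacle beyond this bookkeeping.
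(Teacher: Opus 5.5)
Your proposal is correct and follows the paper's proof: apply the relative adjunction formula to the whole current, insert $c_\tau$, $Q_\tau$ and $\chi$, and eliminate the area terms using the Pick identity $2\,\text{Area}(\Lambda)=I(\Lambda)-x(\Lambda)-y(\Lambda)-m(\Lambda)$ together with $I(C)=I(\Lambda^+)-\sum_i I(\Lambda^i)$. Your Euler characteristic $\chi(C)=2k-2\sum_j g(C_j)-n(C)$ is the right one for a $k$-component curve and is what produces the stated $-k$; the paper's displayed $\chi(C)=2-2\sum_j g(C_j)-n(C)$ would instead give $-1$.
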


\begin{proof}
This is an application of the relative adjunction formula $$c_\tau(C)=\chi(C)+Q_\tau(C)+w_\tau(C)-2\delta(C).$$ These terms may be evaluated as in our proof of the genus bound:
\begin{itemize}
\item $c_\tau(C)=x(\Lambda^+)+y(\Lambda^+)-\sum_{i=1}^m x(\Lambda^i)+y(\Lambda^i)$
\item $Q_\tau(C)=2\text{Area}(\Lambda^+)-2\sum_{i=1}^m \text{Area}(\Lambda^i).$ 
\item $\chi(C)=2-2\sum_{j=1}^k g(C_j)-n(C).$ 
\end{itemize}

Also, whenever $\Lambda$ is a convex generator with all edges labeled ``$e$'', Pick's formula gives 

\begin{align*}
2\text{Area}(\Lambda)&=2L(\Lambda)-2-(x(\Lambda)+y(\Lambda)+m(\Lambda))\\ &=I(\Lambda)-x(\Lambda)-y(\Lambda)-m(\Lambda).
\end{align*}

Substituting into the relative adjunction formula, and using the condition $$I(C)=I(\Lambda^+)-\sum_{i=1}^m I(\Lambda^i)$$ gives the desired result.
\end{proof}

Next, we need to know about the partition data of these holomorphic curves.

\begin{lemma}
Suppose that $C\in \mathcal{M}^J(\alpha,\beta)$ has index zero and no multiply covered components. Then, for every simple orbit $\alpha_i$ appearing in $\alpha,$ each end of $C$ at a cover of $\alpha_i$ has multiplicity $1.$ Also, no two ends of $C$ are asymptotic to covers of the same simple negative Reeb orbit $\beta_i$ appearing in $\beta$. 
\end{lemma}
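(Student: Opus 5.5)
The plan is to reduce the statement to the equality case of the ECH index inequality, $\text{ind}(C)\leq I(C)-2\delta(C)$, applied to each component, and then read off the partition conditions. The paper already uses this in the proof of Theorem \ref{thm:main-convex}.

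First I will show that every component of $C$ has ECH index zero. Write $C=\sum_j C_j$. Each $C_j$ is somewhere injective, since $C$ has no multiply covered components. For generic admissible $J$ each $C_j$ then has $\text{ind}(C_j)\geq 0$, and the index inequality gives $I(C_j)\geq \text{ind}(C_j)+2\delta(C_j)\geq 0$.

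If two components share an orbit, I will use the union bound from \cite{Quant}, $I(C_j+C_k)\geq I(C_j)+I(C_k)+2C_j\cdot C_k$, together with intersection positivity. This shows that the total $I(C)$ is at least $\sum_j I(C_j)$, so $0=I(C)\geq\sum_j I(C_j)\geq 0$. Hence $I(C_j)=0$ for every $j$, every inequality above is an equality, and $\text{ind}(C_j)=I(C_j)=0$ with $\delta(C_j)=0$. This is the same sub-current argument quoted from \cite{Beyond}, Theorem 4.8, in Section 5.

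Next I will invoke the equality case of the index inequality (\cite{LectureNotes}, Section 3.9). Equality forces the ends of $C$ to satisfy the ECH partition conditions. In our trivialization the elliptic orbits $e_{ab}$ have rotation angle slightly positive and arbitrarily close to $0$, since the perturbation is chosen that way for convex domains and the action bound $L$ keeps multiplicities small relative to the perturbation. The positive partition of $e_{ab}^q$ is then $(1,\ldots,1)$, and the negative partition is $(q)$. For a positive hyperbolic orbit $h_{ab}$ the multiplicity in $\alpha$ is $1$, so there is nothing to check.

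Equality also requires the relevant partitions to be attained across the whole current, not just component by component. This comes from the union bound being an equality: $I(C)=\sum I(C_j)$ forces the linking and writhe contributions at shared orbits to be extremal. That is exactly the reasoning of \cite{LectureNotes} Proposition 4.8 quoted in the proof of item (f). It gives that distinct components have no elliptic ends in common at a negative orbit, and combined with the single-component negative partition $(q)$, no two ends of $C$ go to the same negative orbit. For negative hyperbolic orbits, multiplicity $1$ in $\beta$ again makes the claim automatic.

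I expect the main obstacle to be the cross-component statement. The union-bound equality has to rule out two distinct components each having one end at the same negative elliptic orbit; the individual partition conditions alone do not do this. My first attempt will be to use the linking bound from the proof of Proposition A.1. Two such ends contribute at least $\min(q_1,q_2)\geq 1$ to $\ell_\tau$ at the negative end, and this would force $I(C_j+C_k)>I(C_j)+I(C_k)$, contradicting the equality already established.
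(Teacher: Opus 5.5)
Your argument is correct, but it is organized differently from the paper's. The paper's proof is a one-line appeal to the equality case of the index inequality in \cite{LectureNotes}, applied to $C$ as a single somewhere-injective (possibly disconnected) curve. Genericity gives $0\leq \text{ind}(C)\leq I(C)-2\delta(C)\leq 0$, so $\text{ind}(C)=I(C)$. The writhe $w_\tau(C)$ in the relative adjunction formula is the writhe of the total braid at each orbit, including linking between ends of different components. So equality forces the partitions $p^+_\theta(q)=(1,\ldots,1)$ and $p^-_\theta(q)=(q)$ on the combined ends of $C$ at each orbit, which gives both conclusions at once. In that formulation the ``main obstacle'' you identify never arises.

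You instead apply the index inequality only to connected components. This forces you first to show $I(C_j)=0$ via superadditivity, and then to handle shared negative elliptic orbits separately. Your resolution works. With rotation angles slightly positive, the $CZ^I$ terms are additive, so $I(C_j+C_k)-I(C_j)-I(C_k)=2C_j\cdot C_k-2\ell_\tau(C_j,C_k)\geq 2\min(q_1,q_2)\geq 2$ whenever $C_j$ and $C_k$ share a negative elliptic orbit. Then $I(C)\geq I(C_j+C_k)+I(\text{rest})\geq 2$, contradicting $I(C)=0$.

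Your route costs an extra step, but it needs only the index inequality for connected curves. Beyond that it uses the union bound and linking bounds that the paper already relies on for item (f) and Proposition A.1. It also makes explicit the mechanism, the negative-end linking, that the paper leaves hidden inside the citation.
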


\begin{proof}
This follows from the equality condition for the ``partition conditions'' described in Section 3.9 of \cite{LectureNotes}. 
\end{proof}

We are now ready to prove Proposition A.5.

\begin{proof}
Suppose for contradiction that $C'$ is another current in the same moduli space $\mathcal{M}^J(\alpha,\beta)$. If $C$ and $C'$ have any components in common, we delete those from both curves and replace $C$ and $C'$ with the remaining sub-currents.

Let $k$ be the number of components of $C$.

The assumptions on $C$ imply that $I(C)=\delta(C)=g(C_j)=0$ in the statement of Lemma \ref{lem:area-lemma} and so we obtain:

\begin{align*}
&\left(\sum_{i=1}^m x(\Lambda^i)+y(\Lambda^i)+\frac{1}{2}m(\Lambda^i)\right)\\
=&\left(x(\Lambda^+)+y(\Lambda^+)+\frac{1}{2}m(\Lambda^+)\right)-k+\frac{1}{2}\left(n(C)-w_\tau(C)\right).
\end{align*}

Since $C'$ has ends at the same Reeb orbits with multiplicity as $C$, the lefthand side is equal to $Q_\tau(C,C)=Q_\tau(C),$ which we compute as

$$2\text{Area}(\Lambda^+)-2\sum_{i=1}^m \text{Area}(\Lambda^i)=\left(\sum_{i=1}^m x(\Lambda^i)+y(\Lambda^i)+m(\Lambda^i)\right)-x(\Lambda^+)-y(\Lambda^+)-m(\Lambda^+).$$

Put $m^+=m(\Lambda^+)$ and $m^-=\sum_{i=1}^m m(\Lambda^i).$ Then combining with the previous identity yields

\begin{align*}
C\cdot C'&=Q_\tau(C,C')+\ell_\tau(C,C')\\
&=-k+\ell_\tau(C,C')+\frac{1}{2}\left(n(C)-m^++m^--w_\tau(C)\right) \tag{*}
\end{align*}

where the first equality is from the definition of $Q_\tau$ in \cite{Quant}. We recall the definition 

$$\ell_\tau(C,C')=\sum_{\gamma^+}\sum_{\zeta^+,\tilde{\zeta}^+}\ell_\tau(\zeta^+,\tilde{\zeta}^+)-\sum_{\gamma^-}\sum_{\zeta^-,\tilde{\zeta}^-}\ell_\tau(\zeta^-,\tilde{\zeta}^-),$$

where $\gamma^{\pm},\tilde{\gamma}^{\pm}$ run over the set of positive/negative simple Reeb orbits at which $C,C'$ have ends at covers of, and $\zeta^{\pm},\tilde{\zeta}^{\pm}$ are the (possibly empty) braids around $\gamma^{\pm}$ corresponding to the ends of $C$ and $C'.$

The righthand side of (*) is $-k$ plus a sum of contributions from each simple Reeb orbit in $\alpha$ and $\beta$, separately. We check that each Reeb orbit contributes nonpositively to the sum. Let $\gamma$ be a simple Reeb orbit appearing in $\alpha$ with multiplicity $q$. The linking bound for positive orbits in \cite{LectureNotes} shows that it contributes nonpositively to the $\ell_\tau$ term in the sum. Its contribution to the rest of the righthand side is $$\frac{1}{2}\left(n(\gamma)-m(\gamma)-(m(\gamma)-n(\gamma))\right)=0$$ by the partition data in the previous lemma. 

If instead $\gamma$ is a simple orbit appearing in $\beta$, with multiplicity $q$, let $q=q_1+\ldots+q_r$ be the partition associated to the ends of $C'$ at $\gamma$. Then the linking bound tells us that the contribution to $\ell_\tau$ is at most $$-\sum_{i=1}^r\text{min}(q,q_i)=-q,$$ since $C$ has a single end at $\gamma.$ The contribution to the rest of the righthand side from $\gamma$ is $$\frac{1}{2}(1+q-(1-q))=q,$$ so altogether, $\gamma$ contributes nonpositively in this case too. Hence we find $C\cdot C'\leq -1$, which contradicts intersection positivity.
\end{proof}

\end{document}